\definecolor{paleblue}{rgb}{0.7, 0.7, 1.0}
\definecolor{palegreen}{rgb}{0.7, 1,0.7}
\colorlet{colori}{red!80!blue!80!black}
\colorlet{colorj}{red!30!blue!70!brown}
\colorlet{colork}{blue!40!black}
\colorlet{colorM}{olive}
  \theoremstyle{plain}
\newtheorem{theorem}{Theorem}
\newtheorem{proposition}{Proposition}[section]
\newtheorem{corollary}[proposition]{Corollary}
  \theoremstyle{remark}
\newtheorem{remark}[proposition]{Remark}
  \theoremstyle{definition}
\newtheorem{definition}[proposition]{Definition}
\newtheorem{lemma}[proposition]{Lemma}
\newtheorem{example}[proposition]{Example}
\newcommand{\II}{\mathbf{I}}
\newcommand{\NN}{\mathbb{N}}
\newcommand{\ZZ}{\mathbb{Z}}
\newcommand{\CC}{\mathcal C}
\newcommand{\Hom}{\operatorname{Hom}}
\newcommand{\End}{\operatorname{End}}
\newcommand{\Ob}{\operatorname{Ob}}
\renewcommand{\k}{\Bbbk}
\newcommand{\Vect}{\mathbf{Vect}_\k}
\newcommand{\Mod}{\mathbf{Mod}}
\newcommand{\YD}{\mathbf{YD}}
\newcommand{\YDAlg}{\mathbf{YDAlg}}
\newcommand{\Id}{\operatorname{Id}}
\newcommand{\one}{\mathbf{1}}
\newcommand{\oV}{\overline{V}}
\newcommand{\oW}{\overline{W}}
\newcommand{\of}{\overline{f}}
\newcommand{\oxi}{\overline{\xi}}
\newcommand{\osigma}{\overline{\sigma}}
\newcommand{\crot}{\mathring{c}}
\newcommand{\BrSyst}{\mathbf{BrSyst}}
\newcommand{\bialg}{\mathbf{bialg}}
\newcommand{\ibr}{i_{br}}
\renewcommand{\le}{\leqslant}
\renewcommand{\ge}{\geqslant}
\newcommand*{\longhookrightarrow}{\ensuremath{\lhook\joinrel\relbar\joinrel\rightarrow}}
\setlist{nolistsep}
\newcommand*\circled[1]{\tikz[baseline=(char.base)]{
            \node[shape=circle,draw,inner sep=1pt] (char) {#1};}}
\begin{document}

\title{R-Matrices, Yetter-Drinfel$'$d Modules \\ and Yang-Baxter Equation}

\author{Victoria Lebed \\ \itshape lebed@math.jussieu.fr}

\maketitle

\begin{abstract}
\footnotesize
In the first part we recall two famous sources of solutions to the Yang-Baxter equation -- R-matrices and Yetter-Drinfel$'$d (=YD) modules -- and an interpretation of the former as a particular case of the latter. We show that this result holds true in the more general case of weak R-matrices, introduced here. In the second part we continue exploring the ``braided'' aspects of YD module structure, exhibiting a braided system encoding all the axioms from the definition of YD modules. The functoriality and several generalizations of this construction are studies using the original machinery of YD systems.  As consequences, we get a conceptual interpretation of the tensor product structures for YD modules, and a generalization of the deformation cohomology of YD modules. The latter homology theory is thus included into the unifying framework of braided homologies, which contains among others Hochschild, Chevalley-Eilenberg, Gerstenhaber-Schack and quandle homologies.
\end{abstract}

{\bf Keywords:} {\footnotesize Yang-Baxter equation; braided system; Yetter-Drinfel$'$d module; R-matrix; braided homology}

\tableofcontents

\section{Introduction}\label{sec:intro}

The \textit{Yang-Baxter equation} \eqref{eqn:YB} is omnipresent in modern mathematics. Its realm stretches from statistical mechanics to quantum field theory, covering quantum group theory, low-dimensional topology and many other fascinating areas of mathematics and physics. The attemps to understand and classify all solutions to the Yang-Baxter equation (often referred to as \textit{braidings}, since they provide representations of braid groups) have been fruitless so far. Nevertheless we dispose of several methods of producing vast families of such solutions, often endowed with an extremely rich structure.

In Section \ref{sec:R_vs_YD} we review two major algebraic sources of braidings, which have been thouroughly studied from various viewpoints over the past few decades. The first one is given by \textit{R-matrices} for {quasi-triangular bialgebras}, dating back to V.G. Drinfel$'$d's celebrated 1986 ICM talk \cite{DrQuGroups}. The second one comes from \textit{Yetter-Drinfel$'$d modules} (= YD modules)  over a bialgebra, introduced by D. Yetter in 1990 under the name of ``crossed bimodules'' (see \cite{Yetter}) and rediscovered later by different authors under different names (see for instance the paper \cite{Wor} of S.L. Woronowicz, where he implicitely considers a Hopf algebra as a YD module over itself). All these notions and constructions are recalled in detail in Sections \ref{sec:R} and \ref{sec:YD}.

Note that we are interested here in \textbf{not necessarily invertible} braidings; that is why most constructions are effectuated over \textit{bialgebras} rather than \textit{Hopf algebras}, though the latter are more current in literature. Section \ref{sec:YD_br_systems} contains an example where the non-invertibility does matter.

It was shown in S. Montgomery's famous book (\cite{Montgomery}, 10.6.14) that \textbf{R-matrix solutions to the YBE can be interpreted as particular cases of Yetter-Drinfel$'$d type solutions}. We recall this result and its categorical version (due to M. Takeuchi, cf. \cite{Takeuchi}) in Section \ref{sec:inclusion}. Our original contribution consists in a generalization of this result: we introduce the notion of \textit{weak R-matrix} for a bialgebra $H$ and show it to be sufficient for endowing any $H$-module with a YD module structure over $H$ (cf. the charts on p. \pageref{charts}).

In Section \ref{sec:YD_YB} we explore \textbf{deeper connections between YD modules} (and thus R-matrices, as explained above) \textbf{and the YBE}. We show that YD modules give rise not only to braidings, but also to higher-level braided structures, called \textit{braided systems}. We will briefly explain this concept after a short reminder on a ``local'' and a ``global'' category-theoretic viewpoints on the YBE. 

The first one is rather straightforward: in a strict monoidal category $\CC,$ one looks for objects $V$  and endomorphisms $\sigma$ of $V \otimes V$ satisfying \eqref{eqn:YB}. Such $V$'s are called \textit{braided objects} in $\CC.$ A more categorical approach consists in working in a ``globally'' \textit{braided monoidal category}, as defined in 1993 by A. Joyal and R.H. Street (\cite{BraidedCat}). Concretely, a braiding on a monoidal category is a natural family of morphisms $\sigma_{V,W}:V \otimes W \rightarrow W \otimes V$ compatible with the monoidal structure, in the sense of \eqref{eqn:br_cat}-\eqref{eqn:br_cat2}. Every object $V$ of such a category is braided, via $\sigma_{V,V}.$ See \cite{Takeuchi} for a comparison of ``local'' and  ``global'' approaches, in particular in the the context of the definition of braided Hopf algebras. Famous braidings on the category of modules over a quasi-triangular bialgebra and in that of YD modules over a bialgebra are recalled in Sections \ref{sec:R} and \ref{sec:YD}.

Now, the notion of braided system in $\CC$ is a multi-term version of that of braided object: it is a family $V_1, \ldots, V_r$ of objects in $\CC$ endowed with morphisms $\sigma_{i,j}:V_i \otimes V_j \rightarrow V_j \otimes V_i $ for all $ i  \le j,$ satisfying a system of mixed YBEs.  This notion was defined and studied
in \cite{Lebed} (see also \cite{Lebed2}). The $r=2$ case appeared, under the name of \textit{Yang-Baxter system} (or \textit{WXZ system}), in the work of L. Hlavat{\'y} and L. {\v{S}}nobl (\cite{HlSn}). See  Section \ref{sec:br_systems} for details.

In Section \ref{sec:YD_br_systems}, we present a \textbf{braided system structure on the family $(H,M,H^*)$} for a YD module $M$ over a finite-dimensional $\k$-bialgebra $H$ (here $\k$ is a field, and $H^*$ is the linear dual of $H$). 
 Note that several $\sigma_{i,j}$'s from this system are highly non-invertible. Its subsystem $(H,H^*)$ is the braided system encoding the bialgebra structure, constructed and explored in \cite{Lebed2}. The latter construction is related to, but different from, the braided system considered by F.F. Nichita in \cite{Ni_bialg} (see also \cite{YBSyst_Entwining}). 
 
In order to treat the above construction in a conceptual way and extend it to a braided system structure on $(H,V_1,\ldots,V_s,H^*)$ for YD modules $V_1,\ldots,V_s$ over $H,$ we introduce the concept of \textit{Yetter-Drinfel$'$d system} and show it to be automaticaly endowed with a braiding. See Section \ref{sec:YD_systems} for details, and  Section \ref{sec:YD_systems_ex} for examples.

 In Section \ref{sec:YD_systems_prop} we show the {functoriality} and the {precision} of the above braided system construction. The \textit{functoriality} is proved by exhibiting the category inclusion \eqref{eqn:cat_incl_YD_BrSyst}. \textit{Precision} means that the described braided system  $(H,M,H^*)$ captures all the algebraic information about the YD module $M,$ in the sense that each mixed YBE for this system is equivalent to an axiom from the definition of a YD module, and each YD axiom gets a ``braided'' interpretation in this way.
 
Section \ref{sec:YD_tensor} contains an unexpected application of the braided system machinery. Namely, it allows us to recover the two \textit{tensor product structures for YD modules}, proposed by L.A. Lambe and D.E. Radford in \cite{Rad}, from a conceptual viewpoint. 
 
 Applying the general \textit{braided (co)homology} theory from \cite{Lebed1} and \cite{Lebed2} (recalled in Section \ref{sec:hom}) to the braided systems above, we obtain in Section \ref{sec:YD_hom} a rich \textbf{(co)homology theory for (families of) YD modules}. It contains in particular the \textit{deformation cohomology of YD modules}, introduced by F. Panaite and D. {\c{S}}tefan in \cite{Panaite}. The results of Section \ref{sec:inclusion} give then for free a braided system structure for any module over a finite-dimensional quasi-triangular $\k$-bialgebra $H,$ with a corresponding (co)homology theory. Besides a generalization of the Panaite-{\c{S}}tefan construction, our ``braided'' tools allow to considerably simplify otherwise technical verifications from their theory. Moreover, our approach allows to consider YD module (co)homologies in the same \textit{unifying framework} as the (co)homologies of other algebraic structures admitting a braided interpretation, e.g. associative and Leibniz (or Lie) algebras, self-distributive structures, bialgebras and Hopf (bi)modules (see \cite{Lebed1} and \cite{Lebed2}).

\medskip
The paper is intended to be as elementary and as self-contained as possible. Even widely known notions are recalled for the reader's convenience. The already classical \textit{graphical calculus} is extensively used in this paper, with
\begin{itemize}
\item dots standing for vector spaces (or objects in a monoidal category),
\item horizontal gluing corresponding to the tensor product,
\item graph diagrams representing morphisms from the vector space (or object) which corresponds to the lower dots to that corresponding to the upper dots,
\item vertical gluing standing for morphism composition, and vertical strands for identities.
\end{itemize}
Note that all diagrams in this paper are to be read \textbf{from bottom to top}.

Throughout this paper we work in a \textit{strict monoidal category} $\CC$ (Definition \ref{def:monoidal_cat}); as an example, one can have in mind the category $\Vect$ of $\k$-vector spaces and $\k$-linear maps, endowed with the usual tensor product over $\k.$

For an object $V$ in $\CC$ and a morphism $\varphi:V^{\otimes l}\rightarrow V^{\otimes r},$ the following notation is repeatedly used:
\begin{equation}\label{eqn:phi_i}
\varphi^i := \Id_V^{\otimes (i-1)}\otimes\varphi \otimes \Id_V^{\otimes (k-i+1)}:V^{\otimes (k+l)}\rightarrow V^{\otimes(k+r)}.
\end{equation}
Similar notations are used for morphisms on tensor products of different objects.

\section{Two sources of braidings revisited}\label{sec:R_vs_YD}

\subsection{Basic definitions}\label{sec:basic_def}

\begin{definition}\label{def:monoidal_cat}
A \emph{strict monoidal (or tensor) category} is a category $\CC$ endowed with
\begin{itemize}
\item a \emph{tensor product} bifunctor $\otimes: \CC\times \CC \rightarrow \CC$ satisfying the associativity condition;
\item a \emph{unit} object $\II$ which is a left and right identity for $\otimes.$
\end{itemize}
\end{definition}

We work only with {\textbf{strict}} monoidal categories here for the sake of simplicity; according to a theorem of S. MacLane (\cite{Cat}), any monoidal category is monoidally equivalent to a strict one. This justifies in particular parentheses-free notations like $V\otimes W\otimes U$ or $V^{\otimes n}.$ The word ``strict'' is omitted but always implied in what follows.

The ``local'' categorical notion of braiding will be extensively used in this paper:

\begin{definition}\label{def:LocalBr}
An \emph{object} $V$ in a monoidal category $\CC$ is called \emph{braided} if it is endowed with a ``local'' \emph{braiding}, i.e. a morphism
 $$\sigma=\sigma_{V} : V\otimes V \rightarrow V \otimes V,$$
satisfying the (categorical) \emph{Yang-Baxter equation} (= \emph{YBE})
\begin{equation}\label{eqn:YB}\tag{YBE}
(\sigma_{V}\otimes \Id_V)\circ(\Id_V \otimes \sigma_{V})\circ(\sigma_{V}\otimes \Id_V) =(\Id_V \otimes \sigma_{V}
)\circ(\sigma_{V}\otimes \Id_V)\circ(\Id_V \otimes \sigma_{V}).
\end{equation}
\end{definition}

Following D. Yetter (\cite{Yetter}), one should use the term \emph{pre-braiding} here in order to stress that non-invertible $\sigma$'s are allowed; we keep the term \emph{braiding} for simplicity.

Graphically, the braiding $\sigma_{V}$ is presented as $\:$ 
\begin{tikzpicture}[scale=0.4]
\draw [ rounded corners](0,0)--(0,0.25)--(0.4,0.4);
\draw [ rounded corners](0.6,0.6)--(1,0.75)--(1,1);
\draw [ rounded corners](1,0)--(1,0.25)--(0,0.75)--(0,1);
\node  at (2,0)  {.};
\end{tikzpicture}
The diagrammatical counterpart of \eqref{eqn:YB}, depicted on Fig. \ref{pic:YB}, is then the \emph{third Reidemeister move}, which is at the heart of knot theory.
\begin{center}
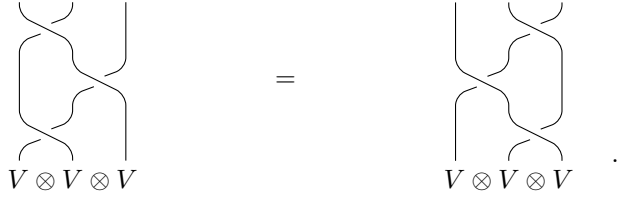

\begin{tikzpicture}[scale=0.7]
\draw [ rounded corners](0,0)--(0,0.25)--(0.4,0.4);
\draw [ rounded corners](0.6,0.6)--(1,0.75)--(1,1.25)--(1.4,1.4);
\draw [ rounded corners](1.6,1.6)--(2,1.75)--(2,3);
\draw [ rounded corners](1,0)--(1,0.25)--(0,0.75)--(0,2.25)--(0.4,2.4);
\draw [ rounded corners](0.6,2.6)--(1,2.75)--(1,3);
\draw [ rounded corners](2,0)--(2,1.25)--(1,1.75)--(1,2.25)--(0,2.75)--(0,3);
\node  at (1,0) [below] {$V \otimes V \otimes V$};
\node  at (5,1.5){$=$};
\end{tikzpicture}
\begin{tikzpicture}[scale=0.7]
\node  at (-2.5,1.5){};
\draw [ rounded corners](1,1)--(1,1.25)--(1.4,1.4);
\draw [ rounded corners](1.6,1.6)--(2,1.75)--(2,3.25)--(1,3.75)--(1,4);
\draw [ rounded corners](0,1)--(0,2.25)--(0.4,2.4);
\draw [ rounded corners](0.6,2.6)--(1,2.75)--(1,3.25)--(1.4,3.4);
\draw [ rounded corners](1.6,3.6)--(2,3.75)--(2,4);
\draw [ rounded corners](2,1)--(2,1.25)--(1,1.75)--(1,2.25)--(0,2.75)--(0,4);
\node  at (1,1) [below] {$V \otimes V \otimes V$};
\node  at (3,1){.};
\end{tikzpicture}
\captionof{figure}{Yang-Baxter equation $\longleftrightarrow$ Reidemeister move III}\label{pic:YB}
\end{center} 

\medskip
The ``global'' categorical notion of braiding will also be used here, both for describing the underlying category $\CC$ and for constructing and systematizing new braidings:

\begin{definition}\label{def:GlobalBr}
\begin{itemize}
\item
A monoidal category $\CC$ is called \emph{braided} if it is endowed with a \emph{braiding (or a commutativity constraint)}, i.e. a natural family of morphisms
 $$c= (c_{V,W} : V\otimes W \rightarrow W \otimes V )_{V,W \in \Ob(\CC)},$$
 satisfying
 \begin{align}
 c_{V,W\otimes U} &=(\Id_W \otimes c_{V,U} )\circ(c_{V,W}\otimes \Id_U),\label{eqn:br_cat}\\
 c_{V\otimes W, U} &=(c_{V,U}\otimes \Id_W)\circ(\Id_V \otimes c_{W,U})\label{eqn:br_cat2}
 \end{align}
for any triple of objects $V, W, U.$ ``Natural'' means here
\begin{equation}\label{eqn:Nat}
c_{V',W'} \circ(f\otimes g) = (g\otimes f)\circ c_{V,W}
\end{equation}
 for all $ V,W,V',W' \in \Ob(\CC), f \in \Hom_\CC(V,V'), g \in \Hom_\CC(W,W').$
 
\item A braided category $\CC$ is called \emph{symmetric} if its braiding is symmetric:
\begin{align}\label{eqn:BrSymm}
c_{V,W} \circ c_{W,V} &=\Id_{W\otimes V} & \forall\: V,W \in \Ob(\CC).
\end{align}
\end{itemize}
\end{definition}

 The part ``monoidal'' of the usual terms ``braided monoidal'' and ``symmetric monoidal'' are omitted in what follows.

\begin{lemma}\label{thm:gl_br_is_loc}
Every object $V$ in a braided category $\CC$ is braided, with $\sigma_{V}=c_{V,V}.$ 
\end{lemma}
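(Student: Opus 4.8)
The plan is to show that the local braiding $\sigma_V := c_{V,V}$ satisfies \eqref{eqn:YB}, using only the hexagon axiom \eqref{eqn:br_cat} and the naturality \eqref{eqn:Nat} of the global braiding $c$. First I would specialize \eqref{eqn:br_cat} to the case $W = U = V$, which expresses the ``long'' braiding as a composite of two elementary crossings:
$$c_{V, V\otimes V} = (\Id_V \otimes \sigma_V)\circ(\sigma_V \otimes \Id_V).$$

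Next I would exploit the naturality of $c$. The key observation is that $\sigma_V = c_{V,V}$ is itself a morphism $V\otimes V \to V\otimes V$, hence an admissible choice of map in \eqref{eqn:Nat}. Applying \eqref{eqn:Nat} with the data $V' = V$, $W = W' = V\otimes V$, $f = \Id_V$ and $g = \sigma_V$ yields the naturality square
$$c_{V, V\otimes V}\circ(\Id_V \otimes \sigma_V) = (\sigma_V \otimes \Id_V)\circ c_{V, V\otimes V}.$$
Geometrically this is the statement that the crossing $\sigma_V$ may be slid past the ``cabled'' strand $V\otimes V$.

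Finally I would substitute the hexagon expression for $c_{V, V\otimes V}$ into both sides of this square. The left-hand side becomes $(\Id_V \otimes \sigma_V)\circ(\sigma_V \otimes \Id_V)\circ(\Id_V \otimes \sigma_V)$, the right-hand side becomes $(\sigma_V \otimes \Id_V)\circ(\Id_V \otimes \sigma_V)\circ(\sigma_V \otimes \Id_V)$, and equating them is exactly \eqref{eqn:YB}. I do not expect a serious obstacle: the only point needing care is the bookkeeping of which tensor factor each elementary braiding acts on when \eqref{eqn:br_cat} is expanded, but this is transparent in the graphical calculus, where the whole argument is one instance of the third Reidemeister move of Fig.~\ref{pic:YB}. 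One could equally run the computation with the second hexagon \eqref{eqn:br_cat2} applied to $c_{V\otimes V, V}$, obtaining the same identity.
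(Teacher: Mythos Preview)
Your proof is correct and follows the same approach as the paper's: both apply naturality \eqref{eqn:Nat} with $V'=V$, $W'=W=V\otimes V$, $f=\Id_V$, $g=c_{V,V}$, then expand $c_{V,V\otimes V}$ via the hexagon axiom. The paper's version is simply terser, leaving the hexagon substitution implicit where you spell it out.
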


\begin{proof}
Take $V'=V,$ $W'=W=V \otimes V,$ $f=\Id_V$ and $g=c_{V,V}$ in the condition \eqref{eqn:Nat} expressing naturality; this gives the YBE. 
\end{proof}

\medskip 
 We further define the structures of algebra, coalgebra, bialgebra and Hopf algebra in a monoidal category $\CC$:

\begin{definition}\label{def:AlgCat}
\begin{itemize}
 \item A \emph{unital associative algebra} (= \emph{UAA}) in $\CC$ is an object $A$ together with morphisms $\mu:A\otimes A\rightarrow A$ and $\nu:\II\rightarrow A$ satisfying  the associativity and the unit conditions:
 \begin{align*}
  \mu\circ(\Id_A\otimes\mu) &=\mu\circ(\mu \otimes \Id_A):A^{\otimes 3}\rightarrow A,\\
  \mu\circ(\Id_A\otimes\nu) &=\mu\circ(\nu \otimes \Id_A) = \Id_A.
 \end{align*}
 A UAA morphism $\varphi$ between UAAs $(A,\mu_A,\nu_A)$ and $(B,\mu_B,\nu_B)$ is a $\varphi \in \Hom_\CC(A,B)$ respecting the UAA structures:
 \begin{align}
 \varphi \circ \mu_{A} &= \mu_{B} \circ (\varphi \otimes \varphi): A \otimes A \rightarrow B,\label{eqn:respect_mu}\\
 \varphi \circ \nu_{A} &= \nu_{B}.\label{eqn:respect_nu}
 \end{align} 
 
\item A UAA $(A,\mu,\nu)$ in $\CC$ is called \emph{braided} if it is endowed with a braiding $\sigma$ compatible with the UAA structure. Using notation \eqref{eqn:phi_i}, this can be written as
\begin{align}
\sigma \circ\mu^1 &= \mu^2\circ (\sigma^1 \circ \sigma^2):A^{\otimes 3} \rightarrow A^{\otimes 2},\label{eqn:BrAlg}\\
\sigma \circ\mu^2 &= \mu^1\circ (\sigma^2 \circ \sigma^1):A^{\otimes 3} \rightarrow A^{\otimes 2};\label{eqn:BrAlg'}\\
\sigma \circ\nu^1 &= \nu^2:A = \II \otimes A = A \otimes \II \rightarrow A^{\otimes 2},\label{eqn:BrAlg''}\\
\sigma \circ\nu^2 &= \nu^1:A = \II \otimes A = A \otimes \II \rightarrow A^{\otimes 2}.\label{eqn:BrAlg'''}
\end{align} 

 \item A \emph{counital coassociative coalgebra} (= \emph{coUAA}) in $\CC$ is an object $C$ together with morphisms $\Delta:C\rightarrow C\otimes C$ and $\varepsilon:C \rightarrow \II$ satisfying  the coassociativity and the counit conditions:
 \begin{align*}
 (\Delta \otimes \Id_C) \circ \Delta &= (\Id_C \otimes \Delta) \circ \Delta: C \rightarrow C^{\otimes 3},\\
 (\varepsilon \otimes \Id_C) \circ \Delta &= (\Id_C \otimes \varepsilon) \circ \Delta = \Id_C.
 \end{align*}
 A coUAA morphism $\varphi$ between coUAAs is a morphism in $\CC$ respecting the coUAA structures.
 
\item A coUAA $(C,\Delta,\varepsilon)$ in $\CC$ is called \emph{braided} if it is endowed with a braiding $\sigma$ compatible with the coUAA structure, in the sense analogous to \eqref{eqn:BrAlg}-\eqref{eqn:BrAlg'''}.

 \item A \emph{left module} over a UAA $(A,\mu,\nu)$ in $\CC$ is an object $M$ together with a morphism $\lambda:A\otimes M\rightarrow M$ respecting $\mu$ and $\nu$: 
 \begin{align}
   \lambda \circ(\mu \otimes \Id_M) &= \lambda \circ(\Id_A\otimes \lambda) :A \otimes A \otimes M\rightarrow M,\label{eqn:Amod}\\
  \lambda \circ(\nu \otimes \Id_M) &= \Id_M. \label{eqn:Amod'}
 \end{align}
 An $A$-module morphism $\varphi$ between $A$-modules $(M,\lambda_M)$ and $(N,\lambda_N)$ is a $\varphi \in \Hom_\CC(M,N)$ respecting the $A$-module structures:
 \begin{align}\label{eqn:AmodMor}
 \varphi \circ \lambda_M &= \lambda_N \circ (\Id_A \otimes \varphi): A \otimes M \rightarrow N.
 \end{align}
 The category of $A$-modules in $\CC$ and their morphisms is denoted by $_A\!\Mod.$ 
 
 Right modules, left/right comodules over coUAAs and their morphisms are defined similarly.

\item 
A \emph{bialgebra} in a \underline{braided} category 
 $(\CC,\otimes,\II,c)$ is a UAA structure $(\mu,\nu)$ and a coUAA structure $(\Delta,\varepsilon)$ on an object $H,$ compatible in the following sense:
\begin{align}
\Delta \circ \mu &= (\mu \otimes \mu)\circ ( \Id_H \otimes c_{H,H} \otimes \Id_H) \circ (\Delta \otimes \Delta): H \otimes H \rightarrow  H \otimes H, \label{eqn:cat_bialg}\\
\Delta \circ \nu &= \nu \otimes \nu : \II \rightarrow  H \otimes H,\notag\\
\varepsilon \circ \mu &= \varepsilon \otimes \varepsilon: H \otimes H \rightarrow  \II, \notag\\
\varepsilon \circ \nu &= \Id_{\II}: \II \rightarrow  \II.\notag
\end{align}
A bialgebra morphism is a morphism which respects UAA and coUAA structures simultaneously.

\item
If moreover $H$ has an \emph{antipode}, i.e. a morphism $s:H\rightarrow H$ satisfying 
\begin{equation}\label{eqn:s}\tag{s}
\mu \circ (s\otimes \Id_H) \circ \Delta = \mu \circ (\Id_H \otimes s) \circ \Delta  = \nu \circ \varepsilon,
\end{equation}
then it is called a \emph{Hopf algebra} in $\CC.$
\end{itemize}
\end{definition} 
 
  The notions of (braided) algebra and coalgebra, and of module and comodule, are mutually dual, while that of braiding, of bialgebra and Hopf algebra are self-dual; see \cite{Cat} or \cite{Lebed} for more details on the \textit{categorical duality}. Graphically, applying this duality consists simply in turning all the diagrams upside down, i.e. taking a
\textit{horizontal mirror image}. Fig. \ref{pic:AssCoass} contains for instance the graphical depictions of the associativity and the coassociativity axioms. Here and afterwards a multiplication $\mu$ is represented as \begin{tikzpicture}[scale=0.3]
 \draw(0,1) -- (0,0)-- (1,-1);
 \draw (0,0)-- (-1,-1);
 \fill[teal] (0,0) circle (0.2);
\end{tikzpicture}, and a comultiplication $\Delta$ -- as \begin{tikzpicture}[scale=0.3]
 \draw(0,-1) -- (0,0)-- (1,1);
 \draw (0,0)-- (-1,1);
 \fill[teal] (0,0) circle (0.2);
\end{tikzpicture}.

\begin{center}
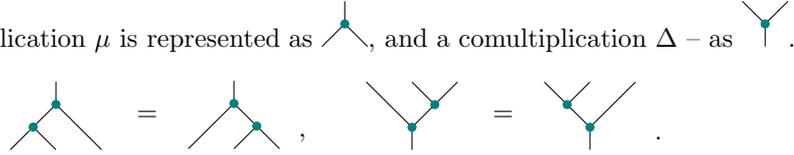

\begin{tikzpicture}[scale=0.3]
\draw (0,0)--(2,2);
\draw (4,0)--(2,2);
\draw (1,1)--(2,0);
\draw (2,3)--(2,2);
\fill[teal] (1,1) circle (0.2);
\fill[teal] (2,2) circle (0.2);
\node at (6,1.5) {$=$};
\node at (7,1.5) {};
\end{tikzpicture}
\begin{tikzpicture}[scale=0.3]
\draw (0,0)--(2,2);
\draw (4,0)--(2,2);
\draw (3,1)--(2,0);
\draw (2,3)--(2,2);
\fill[teal] (3,1) circle (0.2);
\fill[teal] (2,2) circle (0.2);
\node at (5,0.5) {,};
\node at (7,1) {$ $};
\end{tikzpicture}
\begin{tikzpicture}[scale=0.3]
\draw (0,0)--(2,-2);
\draw (4,0)--(2,-2);
\draw (3,-1)--(2,0);
\draw (2,-3)--(2,-2);
\fill[teal] (3,-1) circle (0.2);
\fill[teal] (2,-2) circle (0.2);
\node at (6,-1.5) {$=$};
\node at (7,-1.5) {};
\end{tikzpicture}
\begin{tikzpicture}[scale=0.3]
\draw (0,0)--(2,-2);
\draw (4,0)--(2,-2);
\draw (1,-1)--(2,0);
\draw (2,-3)--(2,-2);
\fill[teal] (1,-1) circle (0.2);
\fill[teal] (2,-2) circle (0.2);
\node at (5,-2.5) {.};
\end{tikzpicture}
   \captionof{figure}{Associativity and coassociativity}\label{pic:AssCoass}
\end{center}   

Graphical versions of several other axioms from the above definition are presented on Figs \ref{pic:BrCoalg} and \ref{pic:Bialg}.
  \begin{center}
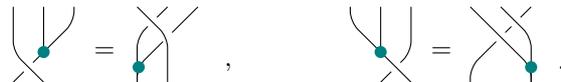

\begin{tikzpicture}[scale=0.4]
 \draw[rounded corners](0.65,0.65) -- (2,2)-- (2,2.5);
 \draw (0,0) -- (0.35,0.35);
 \draw (1,1) -- (1,2.5);
 \draw[rounded corners](1,0) -- (0,1)-- (0,2.5);
 \fill[teal] (1,1) circle (0.2);
 \node at (3,1) {$=$};
\end{tikzpicture}
\begin{tikzpicture}[scale=0.4]
 \draw[rounded corners](0,-0.5) -- (0,0)--(0.85,0.85);
 \draw (1.15,1.15) --   (2,2);
 \draw[rounded corners](0,-0.5) -- (0,1)--(0.35,1.35);
 \draw (0.65,1.65) --  (1,2);
 \draw[rounded corners] (1,-0.5) -- (1,1) -- (0,2);
 \fill[teal] (0.05,0) circle (0.2);
 \node at (3,0) {,};
\end{tikzpicture}
\begin{tikzpicture}[scale=0.4]
 \node at (-3,1) {};
 \draw[rounded corners](1.65,0.65) -- (2,1)-- (2,2.5);
 \draw (1,0) -- (1.35,0.35);
 \draw (1,1) -- (1,2.5);
 \draw[rounded corners](2,0) -- (0,2)-- (0,2.5);
 \fill[teal] (1,1) circle (0.2);
 \node at (3,1) {$=$};
\end{tikzpicture}
\begin{tikzpicture}[scale=0.4]
 \draw[rounded corners](0,-0.5) -- (0,0)-- (0.85,0.85);
 \draw (1.15,1.15) -- (1.35,1.35);
 \draw (1.65,1.65) -- (2,2);
 \draw[rounded corners](2,-0.5) -- (2,1)-- (1,2);
 \draw[rounded corners](2,-0.5) -- (2,0)-- (0,2);
 \fill[teal] (2,0) circle (0.2);
 \node at (3,0) {.};
\end{tikzpicture}
\captionof{figure}{Compatibility conditions for a braiding and a comultiplication}\label{pic:BrCoalg}
 \end{center}

\begin{center}
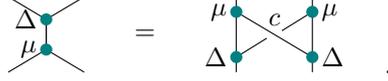

\begin{tikzpicture}[scale=1]
 \draw (0,0) -- (0.5,0.3) -- (0.5,0.7) -- (0,1);
 \draw (1,0) -- (0.5,0.3) -- (0.5,0.7) -- (1,1);
 \node at (0.5,0.3) [left] {$\mu$};
 \node at (0.5,0.7) [left] {$\Delta$};
 \fill [teal] (0.5,0.3) circle (0.08);
 \fill [teal] (0.5,0.7) circle (0.08);
 \node at (1.8,0.5) {$=$};
 \draw (3,0) -- (3,1);
 \draw (4,1) -- (4,0);
 \draw (3,0.2) -- (3.4,0.45);
 \draw (3.6,0.55) -- (4,0.8);
 \draw (3,0.8) -- (4,0.2);
 \fill [teal] (3,0.2) circle (0.08);
 \fill [teal] (3,0.8) circle (0.08);
 \fill [teal] (4,0.8) circle (0.08);
 \fill [teal] (4,0.2) circle (0.08);
 \node at (3,0.8) [left] {$\mu$};
 \node at (3,0.2) [left] {$\Delta$};
 \node at (4,0.8) [right] {$\mu$};
 \node at (4,0.2) [right] {$\Delta$};
 \node at (3.5,0.5) [above] {$c$};
 \node at (5,0) {.};
\end{tikzpicture}
   \captionof{figure}{Main bialgebra axiom \eqref{eqn:cat_bialg}}\label{pic:Bialg}
\end{center}

Note that \eqref{eqn:cat_bialg} is the only bialgebra axiom requiring a braiding on the underlying category $\CC.$

\subsection{R-matrices}\label{sec:R}

From now on we work in a \underline{symmetric} category  $(\CC,\otimes,\II,c).$ Fix a \underline{bialgebra} $H$ in $\CC.$ A bialgebra structure on $H$ is precisely what is needed for the category $_H\!\Mod$ of its left modules to be monoidal: the tensor product $M \otimes N$ of $H$-modules $(M,\lambda_M)$ and $(N,\lambda_N)$ is endowed with the $H$-module structure
\begin{equation}\label{eqn:tensor_mod}
\lambda_{M \otimes N} := (\lambda_M \otimes \lambda_N) \circ (\Id_H \otimes c_{H,M} \otimes \Id_N)\circ (\Delta \otimes \Id_{M \otimes N}),
\end{equation}
and the unit object $\II$ is endowed with the $H$-module structure
\begin{equation}\label{eqn:unit_mod}
\lambda_{\II} := \varepsilon: H \otimes \II = H \rightarrow  \II.
\end{equation}
In what follows we always assume this monoidal structure on $_H\!\Mod.$

If one wants the category $_H\!\Mod$ to be braided (and thus to provide solutions to the Yang-Baxter equation), an additional \textit{quasi-triangular} structure should be imposed on $H.$ The growing interest to quasi-triangular structures can thus be partially explained by their capacity to produce highly non-trivial solutions to the YBE. The most famous example is given by \textit{quantum groups} (see for instance \cite{Kassel}), which will not be discussed here.

\begin{definition}\label{def:R} 
A bialgebra $H$ in $\CC$ is called \emph{quasi-triangular} if it is endowed with an \emph{R-matrix}, i.e. a morphism $R: \II \rightarrow H \otimes H$ satisfying the following conditions:
 \begin{enumerate}
\item $(\Id_H \otimes \Delta) \circ R = (\mu^{op} \otimes \Id_{H\otimes H}) \circ c^2 \circ (R \otimes R),$
\item $(\Delta \otimes \Id_H) \circ R = (\Id_{H\otimes H} \otimes \mu) \circ c^2 \circ (R \otimes R),$
\item $\mu_{H\otimes H} \circ (R \otimes \Delta) = \mu_{H\otimes H} \circ (\Delta^{op} \otimes R),$
\end{enumerate}
where $c^2$ is a shorthand notation for $\Id_H \otimes c_{H,H} \otimes \Id_H,$
\begin{equation}\label{eqn:mu2}
\mu_{H\otimes H}:= (\mu \otimes \mu) \circ c^2: (H \otimes H) \otimes (H \otimes H) \rightarrow H \otimes H 
\end{equation}
 is the standard multiplication on the tensor product of two UAAs, and
$$\mu^{op}:=\mu \circ c_{H,H}, \qquad \qquad \qquad   \Delta^{op}:= c_{H,H} \circ \Delta$$ are the \emph{twisted} multiplication and comultiplication respectively.

The R-matrix $R$ is called \emph{invertible} if there exists a morphism $R^{-1}: \II \rightarrow H \otimes H$  such that
\begin{equation}\label{eqn:inv_of_R}
\mu_{H \otimes H} \circ (R \otimes R^{-1}) = \mu_{H \otimes H} \circ (R^{-1} \otimes R) = \nu \otimes \nu.
\end{equation}
\end{definition}

Fig. \ref{pic:RMatrix} shows a graphical version of the conditions from the definition.

\begin{center}
\begin{tikzpicture}[scale=0.5]
 \draw [rounded corners] (-1,2) -- (-1,1) -- (0,0);
 \draw  (1,1) -- (0,0);
 \draw  [rounded corners] (1,1) -- (0.5,1.5) -- (0.5,2);
 \draw  [rounded corners] (1,1) -- (1.5,1.5) -- (1.5,2);
 \node at (2.5,1)  {$=$};
 \fill [teal]  (1,1) circle (0.15);
 \fill [palegreen] (0,0) circle (0.15);
 \draw (0,0) circle (0.15);
 \node at (0,0) [below] {$R$};
 \node at (1,1) [below] {$\Delta$};
\end{tikzpicture}
\begin{tikzpicture}[scale=0.5]
 \draw [rounded corners] (1,2) -- (1,1) -- (0,0);
 \draw [rounded corners] (2,2) -- (2,1) -- (1,0);
 \draw [rounded corners] (-1,2) -- (-1,1) --(0,0);
 \draw [rounded corners] (-1,1) -- (-1.3,0.75) -- (1,0);
 \node at (3.5,0)  {,};
 \fill [teal]  (-1,1) circle (0.15);
 \fill [palegreen] (0,0) circle (0.15);
 \draw (0,0) circle (0.15);
 \node at (0,0) [below] {$R$};
 \fill [palegreen] (1,0) circle (0.15);
 \draw (1,0) circle (0.15);
 \node at (1,0) [below] {$R$};
 \node at (-1,1) [below left] {$\mu$};
\end{tikzpicture}
\begin{tikzpicture}[scale=0.5]
 \node at (-2,1)  {};
 \draw [rounded corners] (1,2) -- (1,1) -- (0,0);
 \draw  (-1,1) -- (0,0);
 \draw  [rounded corners] (-1,1) -- (-0.5,1.5) -- (-0.5,2);
 \draw  [rounded corners] (-1,1) -- (-1.5,1.5) -- (-1.5,2);
 \node at (2,1)  {$=$};
 \fill [teal]  (-1,1) circle (0.15);
 \fill [palegreen] (0,0) circle (0.15);
 \draw (0,0) circle (0.15);
 \node at (0,0) [below] {$R$};
 \node at (-1,1) [below] {$\Delta$};
\end{tikzpicture}
\begin{tikzpicture}[scale=0.5]
 \draw [rounded corners] (-1,2) -- (-1,1) -- (0,0);
 \draw [rounded corners] (-2,2) -- (-2,1) -- (-1,0);
 \draw  (1,2) -- (1,1) --(0,0);
 \draw  (1,1) -- (-1,0);
 \node at (2.5,0)  {,};
 \fill [teal]  (1,1) circle (0.15);
 \fill [palegreen] (0,0) circle (0.15);
 \draw (0,0) circle (0.15);
 \node at (0,0) [below] {$R$};
 \fill [palegreen] (-1,0) circle (0.15);
 \draw (-1,0) circle (0.15);
 \node at (-1,0) [below] {$R$};
 \node at (1,1) [below] {$\mu$};
\end{tikzpicture}
\begin{tikzpicture}[scale=0.6]
 \node at (-1,1)  {};
 \draw [rounded corners] (2,2) -- (2,1.3) -- (2,0.5) -- (1.5,0) -- (1.5,-0.5);
 \draw [rounded corners] (1,2) -- (1,1.3) -- (1,0.5) -- (1.5,0);
 \draw  (0,1) -- (1,1.5);
 \draw  (0,1) -- (2,1.5);
 \node at (3.5,1)  {$=$};
 \fill [teal]  (1.5,0) circle (0.15);
 \fill [teal]  (1,1.5) circle (0.15);
 \fill [teal]  (2,1.5) circle (0.15);
 \fill [palegreen] (0,1) circle (0.15);
 \draw (0,1) circle (0.15);
 \node at (0,1) [below] {$R$};
 \node at (1.5,0) [left] {$\Delta$};
 \node at (1,1.5) [above right] {$\mu$};
 \node at (2,1.5) [above right] {$\mu$};
\end{tikzpicture}
\begin{tikzpicture}[scale=0.6]
 \node at (0.5,1)  {};
 \draw [rounded corners] (2,2) -- (2,1.3) -- (1,0.5) -- (1.5,0) -- (1.5,-0.5);
 \draw [rounded corners] (1,2) -- (1,1.3) -- (2,0.5) -- (1.5,0);
 \draw  (3,1) -- (1,1.5);
 \draw  (3,1) -- (2,1.5);
 \node at (3.5,0)  {.};
 \fill [teal]  (1.5,0) circle (0.15);
 \fill [teal]  (1,1.5) circle (0.15);
 \fill [teal]  (2,1.5) circle (0.15);
 \fill [palegreen] (3,1) circle (0.15);
 \draw (3,1) circle (0.15);
 \node at (3,1) [below] {$R$};
 \node at (1.5,0) [left] {$\Delta$};
 \node at (1,1.5) [above left] {$\mu$};
 \node at (2,1.5) [above left] {$\mu$};
\end{tikzpicture}
   
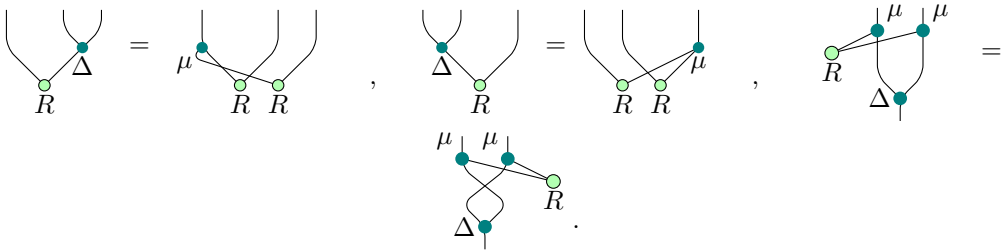
\captionof{figure}{Axioms for an R-matrix}\label{pic:RMatrix}
\end{center}

A well-known result affirms that a quasi-triangular bialgebra structure on $H$ is precisely what is needed for its module category to be braided:

\begin{theorem}\label{thm:R_br_global}
The category $_H\!\Mod$ of left modules over a quasi-triangular bialgebra $H$ in $\CC$ can be endowed with the following braiding (cf. Fig. \ref{pic:RBr}):
\begin{equation}\label{eqn:R_br}
c^R_{M,N}:= c_{M,N} \circ  (\lambda_M \otimes \lambda_N) \circ (\Id_H \otimes c_{H,M} \otimes \Id_N)\circ (R \otimes \Id_{M \otimes N}).
\end{equation}
Here $R$ is the R-matrix of $H,$ and $c$ is the underlying symmetric braiding of the category $\CC.$

If the R-matrix is moreover invertible, then the braiding $c^R$ is invertible as well, with 
\begin{equation}\label{eqn:R_br_inv}
(c^R_{M,N})^{-1}:= (\lambda_M \otimes \lambda_N) \circ (\Id_H \otimes c_{H,M} \otimes \Id_N)\circ (R^{-1} \otimes  c_{N,M}).
\end{equation}
\end{theorem}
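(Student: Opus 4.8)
The plan is to establish the three defining properties of a braiding on ${}_H\Mod$ — that each $c^R_{M,N}$ is a morphism of $H$-modules, that the family $(c^R_{M,N})$ is natural in the sense of \eqref{eqn:Nat}, and that it satisfies the two hexagon identities \eqref{eqn:br_cat}--\eqref{eqn:br_cat2} — and then to verify the inverse formula separately. I would argue diagrammatically throughout, treating the ambient symmetric braiding $c$ as a strand crossing that slides freely past any morphism (naturality \eqref{eqn:Nat}) and annihilates in pairs (symmetry \eqref{eqn:BrSymm}).

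First I would check that $c^R_{M,N}$ intertwines the module structures \eqref{eqn:tensor_mod} on $M\otimes N$ and $N\otimes M$. Expanding both sides of the intertwining condition \eqref{eqn:AmodMor} in terms of $\Delta$, the actions $\lambda_M,\lambda_N$, and the crossings $c$, and simplifying with the module axiom \eqref{eqn:Amod} and the naturality of $c$, the identity reduces precisely to the third R-matrix axiom of Definition~\ref{def:R}, namely $\mu_{H\otimes H}\circ(R\otimes\Delta)=\mu_{H\otimes H}\circ(\Delta^{op}\otimes R)$, which is exactly the quasi-cocommutativity needed to commute $R$ past $\Delta$. Naturality of the whole family then follows readily, since an $H$-module map may be slid through every crossing by naturality of $c$ and past the actions using \eqref{eqn:AmodMor}. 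For the hexagons I would expand $c^R_{M,N\otimes U}$ and $c^R_{M\otimes W,U}$ using \eqref{eqn:tensor_mod} and the coassociativity of $\Delta$; the two resulting diagrams match the right-hand sides of \eqref{eqn:br_cat} and \eqref{eqn:br_cat2} by the first and second R-matrix axioms, which compute $(\Id_H\otimes\Delta)\circ R$ and $(\Delta\otimes\Id_H)\circ R$ respectively.

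For the final, invertibility statement, write $A:=(\lambda_M\otimes\lambda_N)\circ(\Id_H\otimes c_{H,M}\otimes\Id_N)$, so that $c^R_{M,N}=c_{M,N}\circ A\circ(R\otimes\Id_{M\otimes N})$ and, using that $\II$ is the unit, $(c^R_{M,N})^{-1}=A\circ(R^{-1}\otimes\Id_{M\otimes N})\circ c_{N,M}$. In the composite $(c^R_{M,N})^{-1}\circ c^R_{M,N}$ the adjacent factor $c_{N,M}\circ c_{M,N}$ collapses to $\Id_{M\otimes N}$ by symmetry \eqref{eqn:BrSymm}, leaving $A\circ(R^{-1}\otimes\Id_{M\otimes N})\circ A\circ(R\otimes\Id_{M\otimes N})$. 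I would then merge the two successive $H$-actions on each of $M$ and $N$ using \eqref{eqn:Amod}: after sliding the $R^{-1}$-strands into position by naturality of $c$, the two legs recombine exactly through $\mu_{H\otimes H}=(\mu\otimes\mu)\circ c^2$, so that the composite becomes the action on $M\otimes N$ of $\mu_{H\otimes H}\circ(R^{-1}\otimes R):\II\to H\otimes H$, its two legs acting on $M$ and $N$. By \eqref{eqn:inv_of_R} this morphism equals $\nu\otimes\nu$, whereupon the unit axiom \eqref{eqn:Amod'} collapses the whole composite to $\Id_{M\otimes N}$. The opposite composite $c^R_{M,N}\circ(c^R_{M,N})^{-1}=\Id_{N\otimes M}$ follows by the same merging, now producing $\mu_{H\otimes H}\circ(R\otimes R^{-1})=\nu\otimes\nu$, with one final use of \eqref{eqn:BrSymm} to cancel the outer crossings.

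The main obstacle is the bookkeeping of the symmetric crossings in this merging step. One must slide the ``upper'' $R^{-1}$-legs downward past the module legs and past the first $R$-action so that the two $H\otimes H$-valued morphisms combine in precisely the order dictated by $\mu_{H\otimes H}$, and not in a transposed order. It is here that symmetry of $c$ — rather than a mere braiding — is indispensable: the two crossings introduced by the separate copies of $c_{H,M}$ in the two occurrences of $A$ must annihilate in pairs in order for \eqref{eqn:inv_of_R} to apply verbatim.
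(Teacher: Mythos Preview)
Your direct verification is correct and is exactly the ``can be verified directly'' that the paper alludes to but does not carry out. The paper itself does not prove Theorem~\ref{thm:R_br_global} on the spot; it defers the direct check to the literature (Kassel) and instead supplies an \emph{indirect} argument in Section~\ref{sec:inclusion}: one shows that the weak R-matrix turns every $H$-module into a YD module via $\delta^R$ (Theorem~\ref{thm:RActionGivesCoaction}), observes that $c^R_{M,N}=c^{YD}_{M,N}$ under this identification, and then inherits the braided monoidal structure from ${}_H\YD^H$ (Theorem~\ref{thm:YD_br_global}). Your approach has the virtue of showing transparently which R-matrix axiom is responsible for which braiding axiom (axiom~3 for $H$-linearity, axioms~1 and~2 for the two hexagons); the paper's approach buys a unified picture in which both $c^R$ and $c^{YD}$ are instances of the same construction, and in which the $\varepsilon$-normalisations of weak/strong R-matrices acquire a structural meaning (compatibility of $i_R$ with the monoidal unit).
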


\begin{center}
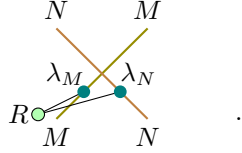

\begin{tikzpicture}[scale=1.2]
 \draw[colorM, thick] (0,0) -- (1,1);
 \draw[brown, thick] (1,0) -- (0,1);
 \draw (-0.2,0.05) -- (0.7,0.3);
 \draw (-0.2,0.05) -- (0.3,0.3);
 \node at (0.9,0.5) {$\lambda_N$};
 \node at (0.1,0.5) {$\lambda_M$}; 
 \node at (-0.2,0.05) [left]{$R$}; 
 \node at (1,0) [below] {$N$};
 \node at (0,0) [below] {$M$};
 \node at (2,0) {.};
 \node at (0,1) [above] {$N$}; 
 \node at (1,1) [above] {$M$};
 \fill[teal] (0.7,0.3) circle (0.07);
 \fill[teal] (0.3,0.3) circle (0.07);
 \fill [palegreen] (-0.2,0.05) circle (0.07);
 \draw (-0.2,0.05) circle (0.07);
\end{tikzpicture}
   \captionof{figure}{A braiding for $H$-modules}\label{pic:RBr}
\end{center}

All the statements of the theorem can be verified directly (see \cite{Kassel} or any other book on quantum groups). In Section \ref{sec:inclusion} we will see an indirect proof based on a Yetter-Drinfel$'$d module interpretation of modules over a quasi-triangular bialgebra.

If one is only interested in solutions to the YBE, the following corollary of the above theorem is sufficient:

\begin{corollary}\label{thm:R_br_local}
Given a quasi-triangular bialgebra $(H,R)$ in $\CC,$ any left module $M$ over $H$ is a braided object in $\CC,$ with the braiding $\sigma_M = c^R_{M,M},$ which is invertible if the R-matrix $R$ is.
\end{corollary}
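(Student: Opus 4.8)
The plan is to deduce this statement directly from Theorem \ref{thm:R_br_global} together with Lemma \ref{thm:gl_br_is_loc}, rather than to verify the Yang-Baxter equation for $\sigma_M$ by hand. Theorem \ref{thm:R_br_global} asserts that the monoidal category $_H\!\Mod$ carries a global braiding $c^R$ in the sense of Definition \ref{def:GlobalBr}, given by \eqref{eqn:R_br}. Lemma \ref{thm:gl_br_is_loc} tells us that every object of a braided category is a braided object in the local sense of Definition \ref{def:LocalBr}, with local braiding $\sigma_V = c_{V,V}$. Specializing this to our braiding and to $V := M$ immediately yields that $M$ is a braided object with $\sigma_M = c^R_{M,M}$.

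Concretely, first I would invoke Theorem \ref{thm:R_br_global} to obtain the global braiding $c^R$ on $_H\!\Mod$. Then I would apply Lemma \ref{thm:gl_br_is_loc}: its proof shows that the naturality condition \eqref{eqn:Nat}, read with $V' = V$, $W' = W = V \otimes V$, $f = \Id_V$, and $g = c^R_{V,V}$, collapses precisely to the categorical Yang-Baxter equation \eqref{eqn:YB} for $\sigma_V = c^R_{V,V}$. Taking $V = M$ gives the desired YBE for $\sigma_M$.

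For the invertibility claim, I would note that Theorem \ref{thm:R_br_global} already records that whenever $R$ is invertible the whole family $c^R$ is invertible, with inverse given by \eqref{eqn:R_br_inv}. Evaluating the diagonal component $M = N$ then shows that $\sigma_M = c^R_{M,M}$ is invertible, with inverse obtained by setting $M = N$ in \eqref{eqn:R_br_inv}.

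Since the statement is a straightforward specialization, there is no genuine obstacle; the only point requiring care is that the global-to-local passage of Lemma \ref{thm:gl_br_is_loc} is legitimate (it relies solely on naturality of $c^R$, which is part of its being a braiding) and that setting the two module arguments equal is compatible with both the braiding and its inverse. One could alternatively bypass the categorical detour and check \eqref{eqn:YB} for $\sigma_M$ directly from the R-matrix axioms of Definition \ref{def:R}, but this would merely reprove a special case of the computation underlying Theorem \ref{thm:R_br_global}.
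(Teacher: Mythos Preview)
Your proposal is correct and follows exactly the same approach as the paper: apply Lemma \ref{thm:gl_br_is_loc} to the braided category structure on $_H\!\Mod$ provided by Theorem \ref{thm:R_br_global}. You give more detail than the paper (which is a single sentence), but the argument is identical.
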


\begin{proof}
Apply Lemma \ref{thm:gl_br_is_loc} to the braided category structure from Theorem \ref{thm:R_br_global}.
\end{proof}

\subsection{Yetter-Drinfel$'$d modules}\label{sec:YD}

Yetter-Drinfel$'$d modules are known to be at the origin of a very vast family of solutions to the Yang-Baxter equation. According to \cite{FRT2}, \cite{FRT1} and \cite{Rad2}, this family is complete if one restricts oneself to finite-dimensional solutions over a field $\k.$ This led L.A. Lambe and D.E. Radford to use the eloquent term \textit{quantum Yang-Baxter module} instead of the more historical term \textit{Yetter-Drinfel$'$d module}, cf. \cite{Rad}. We recall here the definition of this structure and its most important properties.

\begin{definition}\label{def:YDMod}
 A \emph{Yetter-Drinfel$'$d (= YD) module} structure over a bialgebra $H$ in a symmetric category $\CC$ consists of a left $H$-module structure $\lambda$ and a right $H$-comodule structure $\delta$ on an object $M,$ satisfying the \emph{Yetter-Drinfel$'$d compatibility condition} (cf. Fig. \ref{pic:YDCondition})
\begin{equation}\label{eqn:YD}\tag{YD}
(\Id_M \otimes \mu)\circ (\delta \otimes \Id_H) \circ c_{H,M} \circ (\Id_H \otimes  \lambda) \circ (\Delta \otimes \Id_M) = 
\end{equation}
$$ (\lambda \otimes \mu) \circ (\Id_H \otimes c_{H,M}  \otimes \Id_H) \circ (\Delta \otimes \delta).$$

The category of YD modules over a bialgebra $H$ (with, as morphisms, those which are simultaneously $H$-module and $H$-comodule morphisms) is denoted by ${_H}\!\YD^H.$
\end{definition}

\begin{center}
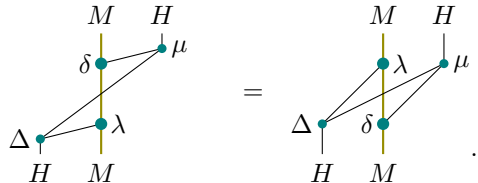

\begin{tikzpicture}[scale=0.4]
 \draw[colorM, thick] (1,4) -- (1,0);
 \draw (-1,0) -- (-1,0.5) -- (3,3.5) -- (3,4);
 \draw (1,1) -- (-1,0.5);
 \draw (1,3) -- (3,3.5);
 \node at (1,1) [right]{$\lambda$};
 \node at (1,3) [left]{$\delta$};
 \node at (3,3.5) [ right]{$\mu$};
 \node at (-1,0.5) [ left]{$\Delta$};
 \node at (1,0) [below] {$M$};
 \node at (1,4) [above] {$M$}; 
 \node at (-1,0) [below] {$H$};
 \node at (3,4) [above] {$H$};
 \fill[teal] (1,1) circle (0.2);
 \fill[teal] (1,3) circle (0.2);
 \fill[teal] (-1,0.5) circle (0.15);
 \fill[teal] (3,3.5) circle (0.15);
\node  at (6,2){$=$};
\end{tikzpicture}
\begin{tikzpicture}[scale=0.4]
\node  at (-1,1.5){};
 \draw[colorM, thick] (1,4) -- (1,0);
 \draw (-1,0) -- (-1,1) -- (3,3) -- (3,4);
 \draw (1,3) -- (-1,1);
 \draw (1,1) -- (3,3);
 \node at (1,3) [right]{$\lambda$};
 \node at (1,1) [left]{$\delta$};
 \node at (3,3) [ right]{$\mu$};
 \node at (-1,1) [ left]{$\Delta$};
 \node at (1,0) [below] {$M$};
 \node at (1,4) [above] {$M$}; 
 \node at (-1,0) [below] {$H$};
 \node at (3,4) [above] {$H$};
 \fill[teal] (1,1) circle (0.2);
 \fill[teal] (1,3) circle (0.2);
 \fill[teal] (-1,1) circle (0.15);
 \fill[teal] (3,3) circle (0.15);
 \node at (4,0) {.};
\end{tikzpicture}
   \captionof{figure}{Yetter-Drinfel$'$d compatibility condition}\label{pic:YDCondition}
\end{center}

More precisely, the definition above describes \textit{left-right Yetter-Drinfel$'$d modules}. One also encounters right-left and, if $H$ is a Hopf algebra, right-right and left-left versions. If the antipode $s$ of $H$ is invertible, then all these notions are equivalent due to the famous correspondence between left and right $H$-module structures (similarly for comodules). For example, a right action can be transformed to a left one via
\begin{center} $\lambda := \rho \circ (\Id_M \otimes s^{-1}) \circ c_{H,M}: H \otimes M \rightarrow M \; =\; $
\begin{tikzpicture}[scale=0.3]
 \draw[colorM, thick] (-1,-4) -- (-1,-2);
 \draw[rounded corners] (-1,-2.5) -- (-0.5,-3) -- (-1.5,-4);
 \node at (-1,-2.5) [left]{$\rho$};
 \node at (-0.5,-3) [right] {$s^{-1}$};
 \fill[teal] (-1,-2.5) circle (0.2);
 \fill[brown] (-0.6,-3) circle (0.15);
 \draw (-0.6,-3) circle (0.15);
\end{tikzpicture}.
\end{center}

\begin{example}\label{ex:G_as_YD}
A simple but sufficiently insightful example is given by the group algebra $H = \k G$ of a finite group $G,$ which is a Hopf algebra via the linearization of the maps $\Delta (g)= g \otimes g,$ $\varepsilon (g) = 1$ and $s(g)= g^{-1}$ for all $g \in G.$ For such an $H,$ the notion of left $H$-module (in the category $\Vect$) is easily seen to reduce to that of a $\k$-linear representation of $G,$ and the notion of right $H$-comodule to that of a $G$-graded vector space $M = \bigoplus_{g \in G} M_g,$ with 
$$\delta (m) = m \otimes g \qquad \forall m \in M_g.$$ 
The compatibility condition \eqref{eqn:YD} reads in this setting 
$$g \cdot M_h \subseteq M_{gh g^{-1}} \qquad \forall g,h \in G$$
(here the left $H$-action on $M$ is denoted by a dot). In particular, $H$ becomes a YD module over itself when endowed with the $G$-grading $H = \bigoplus_{g \in G} H_g,$ $H_g:= \k g$ and the adjoint $G$-action $$g \cdot h:= gh g^{-1}.$$ 
\end{example}

The category ${_H}\! \YD ^H$ can be endowed with a monoidal structure in several ways (cf. \cite{Rad}). We choose here the structure which makes the forgetful functor
\begin{align*}
\operatorname{For}: {_H}\! \YD ^H & \longrightarrow {_H}\!\Mod,\\
(M,\lambda, \delta) & \longmapsto (M,\lambda)
\end{align*}
monoidal, where $_H\!\Mod$ is endowed with the monoidal structure described in Section \ref{sec:R}.
Concretely, the tensor product $M \otimes N$ of YD modules $(M,\lambda_M,\delta_M)$ and $(N,\lambda_N,\delta_N)$ is endowed with the $H$-module structure \eqref{eqn:tensor_mod} and the $H$-comodule structure
\begin{equation}\label{eqn:tensor_comod}
\delta_{M \otimes N} := (\Id_{M \otimes N} \otimes \mu^{op}) \circ (\Id_M \otimes c_{H,N} \otimes \Id_H)\circ (\delta_M \otimes \delta_N),
\end{equation}
and the unit object $\II$ is endowed with the $H$-modules structure \eqref{eqn:unit_mod} and the $H$-comodule structure
\begin{equation}\label{eqn:unit_comod}
\delta_{\II} := \nu: \II \rightarrow H = \II \otimes H.
\end{equation}
Note that using the twisted multiplication $\mu^{op}$ in the definition of $\delta_{M \otimes N}$ is essential for assuring its YD compatibility with $\lambda_{M \otimes N}.$

The monoidal category ${_H}\! \YD ^H$ defined this way possesses a famous braided structure:

\begin{theorem}\label{thm:YD_br_global}
The category ${_H}\! \YD ^H$ of left-right Yetter-Drinfel$'$d modules can be endowed with the following braiding (cf. Fig. \ref{pic:YDBr}):
\begin{equation}\label{eqn:YD_br} 
c^{YD}_{M,N}:= (\Id_N \otimes \lambda_M) \circ (\delta_N \otimes \Id_M) \circ c_{M,N}.
\end{equation} 

If $H$  is moreover a Hopf algebra with the antipode $s,$ then the braiding $c^{YD}$ is invertible, with 
\begin{equation}\label{eqn:YD_br_inv}
(c^{YD}_{M,N})^{-1}:= c_{N,M} \circ  (\Id_N \otimes \lambda_M) \circ (\Id_N \otimes s \otimes \Id_M) \circ (\delta_N \otimes \Id_M).
\end{equation}
\end{theorem}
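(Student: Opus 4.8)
The plan is to verify, for the family $c^{YD}=(c^{YD}_{M,N})$, the three defining properties of a braiding from Definition~\ref{def:GlobalBr}: that each $c^{YD}_{M,N}$ is a morphism in ${_H}\!\YD^H$, that the family is natural in the sense of \eqref{eqn:Nat}, and that it satisfies the two hexagon identities \eqref{eqn:br_cat}--\eqref{eqn:br_cat2}; and then to check that \eqref{eqn:YD_br_inv} is a two-sided inverse when $H$ is a Hopf algebra. Throughout I would argue diagrammatically, reading \eqref{eqn:YD_br} as: swap $M$ and $N$ by the ambient symmetric $c_{M,N}$, coact on $N$ via $\delta_N$ to produce a copy of $H$, and let that copy act on $M$ via $\lambda_M$.

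First I would dispose of naturality, the easy part: for YD morphisms $f\colon M\to M'$ and $g\colon N\to N'$ the identity $c_{M',N'}\circ(f\otimes g)=(g\otimes f)\circ c_{M,N}$ holds for the ambient braiding by \eqref{eqn:Nat}, and the remaining factors $\delta$ and $\lambda$ slide past $f,g$ precisely because $f$ is $H$-linear and $g$ is $H$-colinear. The more substantial point is that each $c^{YD}_{M,N}$ is simultaneously $H$-linear and $H$-colinear for the tensor-product structures \eqref{eqn:tensor_mod} and \eqref{eqn:tensor_comod} on $M\otimes N$ and $N\otimes M$. Expanding both sides of the $H$-linearity equation $c^{YD}_{M,N}\circ\lambda_{M\otimes N}=\lambda_{N\otimes M}\circ(\Id_H\otimes c^{YD}_{M,N})$ diagrammatically, one uses the module axiom \eqref{eqn:Amod}, coassociativity, naturality of the ambient $c$, and --- crucially --- the Yetter-Drinfel$'$d compatibility \eqref{eqn:YD} to match them; the $H$-colinearity check is dual and similarly rests on \eqref{eqn:YD} together with the twisting by $\mu^{op}$ built into \eqref{eqn:tensor_comod}.

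For the hexagons I would expand each side using the defining formulas. For \eqref{eqn:br_cat} the comodule structure \eqref{eqn:tensor_comod} on $W\otimes U$ unpacks $\delta_{W\otimes U}$ into $\delta_W$, $\delta_U$ and $\mu^{op}$, after which associativity of $\mu$ and the module axiom \eqref{eqn:Amod} collapse the two successive actions on $V$ into the single action appearing in $(\Id_W\otimes c^{YD}_{V,U})\circ(c^{YD}_{V,W}\otimes\Id_U)$; \eqref{eqn:br_cat2} is handled symmetrically, now unpacking the module structure \eqref{eqn:tensor_mod} on $V\otimes W$ and invoking coassociativity of $\Delta$. The unit normalisations \eqref{eqn:unit_mod}--\eqref{eqn:unit_comod} ensure compatibility with $\II$. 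I expect the $\mu^{op}$-twist in \eqref{eqn:tensor_comod} to be exactly what makes the first hexagon close, as already flagged in the remark following \eqref{eqn:unit_comod}.

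Finally, assuming an antipode $s$, I would compose \eqref{eqn:YD_br_inv} with \eqref{eqn:YD_br}: the two successive coactions $\delta_N$ merge by comodule coassociativity, feeding an $H\otimes H$ into $s\otimes\Id_H$ followed by $\mu$, so that the antipode axiom \eqref{eqn:s} collapses this block to $\nu\circ\varepsilon$; the counit then contracts against the coaction to restore the $N$-leg while the module unit axiom \eqref{eqn:Amod'} removes the now-trivial action, reducing the middle of the composite to the identity of $N\otimes M$, whereupon the two flanking swaps cancel by symmetry \eqref{eqn:BrSymm}; the other composite is checked the same way. The main obstacle is the morphism-in-category step, i.e.\ the intertwined $H$-linearity and $H$-colinearity of $c^{YD}$: this is the only place where \eqref{eqn:YD} is genuinely needed, and the diagram chase must thread the action, the coaction and the ambient swap through the twisted tensor-product structures in exactly the right order.
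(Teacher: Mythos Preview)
Your proposal is correct and follows exactly the approach the paper indicates: the paper's entire proof reads ``The theorem can be proved by an easy direct verification,'' and what you have sketched is precisely that verification---naturality, the morphism-in-category check via \eqref{eqn:YD}, the two hexagons via the tensor (co)module structures, and the inverse via the antipode axiom. Your identification of the YD compatibility \eqref{eqn:YD} as the crux of the $H$-(co)linearity step and of the $\mu^{op}$-twist in \eqref{eqn:tensor_comod} as what makes the first hexagon close is spot on.
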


\begin{center}
\begin{tikzpicture}[scale=0.6]
 \draw[colorM, thick] (0,0) -- (1,1);
 \draw[brown, thick] (1,0) -- (0,1);
 \draw (0.85,0.85) -- (0.3,0.7);
 \node at (0.3,0.7) [left]{$\delta_N$};
 \node at (0.85,0.85) [right]{$\lambda_M$}; 
 \node at (1,0) [below] {$N$};
 \node at (0,0) [below] {$M$};
 \node at (2,0) {.};
 \fill[teal] (0.3,0.7) circle (0.1);
 \fill[teal] (0.85,0.85) circle (0.1);
\end{tikzpicture}
   \captionof{figure}{A braiding for left-right YD modules}\label{pic:YDBr}
\end{center}

The theorem can be proved by an easy direct verification. 

\pagebreak
\begin{remark}\label{rmk:YD_br_alter}
The category ${_H}\! \YD ^H$ can be endowed with a monoidal structure alternative to 
\eqref{eqn:tensor_mod}, \eqref{eqn:tensor_comod}. Namely, one can endow the tensor product of YD modules $M$ and $N$ with the ``twisted'' module and usual comodule structure:
\begin{align}
\mathring{\lambda}_{M \otimes N} &:= (\lambda_M \otimes \lambda_N) \circ (\Id_H \otimes c_{H,M} \otimes \Id_N)\circ (\Delta^{op} \otimes \Id_{M \otimes N}),\label{eqn:tensor_mod_alter}\\
\mathring{\delta}_{M \otimes N} &:= (\Id_{M \otimes N} \otimes \mu) \circ (\Id_M \otimes c_{H,N} \otimes \Id_H)\circ (\delta_M \otimes \delta_N).\label{eqn:tensor_comod_alter}
\end{align}

Theorem \ref{thm:YD_br_global} remains true in this setting if one replaces the braiding $c^{YD}$ with its alternative version
\begin{equation}\label{eqn:YD_br_alter} 
\crot^{YD}_{M,N}:= c_{M,N} \circ (\Id_M \otimes \lambda_N) \circ (\delta_M \otimes \Id_N).
\end{equation} 

This construction is best explained graphically. First, observe that the notions of bialgebra, YD module and braiding are stable by the \emph{central symmetry}. In other words, the sets of diagrams representing the axioms defining these notions are stable by an angle $\pi$ rotation (hence the notations $\crot$  etc. evoking rotation). Now, an angle $\pi$ rotation of the $H$-module structure \eqref{eqn:tensor_mod} is the $H$-comodule structure \eqref{eqn:tensor_comod_alter}, and similarly for \eqref{eqn:tensor_comod} and \eqref{eqn:tensor_mod_alter}. To conclude, note that the braiding $\crot^{YD}$ is precisely an angle $\pi$ rotation of ${c}^{YD}$ (cf. Figs \ref{pic:YDBr} and \ref{pic:YDBr_alter}). This alternative structure will be used in Section \ref{sec:YD_YB}.

\begin{center}
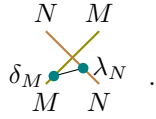

\begin{tikzpicture}[scale=0.7]
 \draw[colorM, thick] (0,0) -- (1,1);
 \draw[brown, thick] (1,0) -- (0,1);
 \draw (0.15,0.15) -- (0.7,0.3);
 \node at (0.15,0.15) [left]{$\delta_M$};
 \node at (0.7,0.3) [right]{$\lambda_N$}; 
 \node at (1,0) [below] {$N$};
 \node at (0,0) [below] {$M$};
 \node at (2,0) {.};
 \node at (0,1) [above] {$N$}; 
 \node at (1,1) [above] {$M$};
 \fill[teal] (0.15,0.15) circle (0.1);
 \fill[teal] (0.7,0.3) circle (0.1);
\end{tikzpicture}
   \captionof{figure}{An alternative braiding for left-right YD modules}\label{pic:YDBr_alter}
\end{center}

\end{remark}

If one is only interested in solutions to the YBE, the following corollary is sufficient:

\begin{corollary}\label{thm:YD_br_local}
Given a bialgebra $H$ in $\CC,$ any left-right YD module $M$ over $H$ is a braided object in $\CC,$ with the braiding $\sigma_M = c^{YD}_{M,M},$ which is invertible if $H$ is moreover a Hopf algebra.
\end{corollary}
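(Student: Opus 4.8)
The plan is to derive Corollary~\ref{thm:YD_br_local} directly from the global braiding of Theorem~\ref{thm:YD_br_global} together with the bridge Lemma~\ref{thm:gl_br_is_loc}. Since Theorem~\ref{thm:YD_br_global} asserts that $({_H}\!\YD^H,\otimes,\II,c^{YD})$ is a braided monoidal category, every object in it is automatically a braided object in the sense of Definition~\ref{def:LocalBr}. First I would invoke Lemma~\ref{thm:gl_br_is_loc}, applied to the braided category $({_H}\!\YD^H, c^{YD})$: it gives that any YD module $M$ is a braided object with local braiding $\sigma_M := c^{YD}_{M,M}$, satisfying \eqref{eqn:YB}. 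This is exactly the braiding displayed in \eqref{eqn:YD_br} with $M=N$, i.e. $\sigma_M = (\Id_M \otimes \lambda) \circ (\delta \otimes \Id_M) \circ c_{M,M}$.

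One subtlety must be handled: Definition~\ref{def:LocalBr} asks for a braided object \emph{in $\CC$}, whereas Lemma~\ref{thm:gl_br_is_loc} produces a braided object in ${_H}\!\YD^H$. These are compatible because the forgetful functor $\for:{_H}\!\YD^H \to \CC$ (through $_H\!\Mod$) is monoidal, so it sends the categorical YBE satisfied by $\sigma_M$ in ${_H}\!\YD^H$ to the same equation \eqref{eqn:YB} read in $\CC$; concretely, $\sigma_M$ is literally a morphism $M\otimes M \to M\otimes M$ in $\CC$ built from the structure maps $\lambda,\delta,c_{M,M}$, and all the identities involved are composites of $\CC$-morphisms. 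Hence the YBE for $\sigma_M$ holds verbatim in $\CC$.

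For the invertibility claim, I would add the hypothesis that $H$ is a Hopf algebra with antipode $s$ and quote the second assertion of Theorem~\ref{thm:YD_br_global}: the global braiding $c^{YD}$ is then invertible, with inverse given by \eqref{eqn:YD_br_inv}. Specializing $N=M$ shows that $\sigma_M = c^{YD}_{M,M}$ is invertible, with
\[
\sigma_M^{-1} = c_{M,M} \circ (\Id_M \otimes \lambda) \circ (\Id_M \otimes s \otimes \Id_M) \circ (\delta \otimes \Id_M).
\]
This completes the statement.

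I do not expect any genuine obstacle here, since the corollary is a formal specialization. The only point requiring a line of justification — rather than pure invocation — is the transfer of the YBE from the ambient category ${_H}\!\YD^H$ to $\CC$, which is immediate once one observes that the braiding morphism is expressed entirely in terms of $\CC$-level data and that composition and $\otimes$ in ${_H}\!\YD^H$ restrict to those of $\CC$ on underlying objects. Thus the proof reduces to applying Lemma~\ref{thm:gl_br_is_loc} to Theorem~\ref{thm:YD_br_global} and unwinding the definition of $c^{YD}_{M,M}$.
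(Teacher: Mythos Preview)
Your proposal is correct and follows exactly the paper's own proof, which simply reads ``Apply Lemma~\ref{thm:gl_br_is_loc} to the braided category structure from Theorem~\ref{thm:YD_br_global}.'' Your additional remark about the forgetful functor transferring the YBE from ${_H}\!\YD^H$ to $\CC$ is a valid clarification, but the paper leaves this implicit.
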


\begin{proof}
Apply Lemma \ref{thm:gl_br_is_loc} to the braided category structure from Theorem \ref{thm:YD_br_global}.
\end{proof}

\begin{example}\label{ex:G_as_YD_br}
Applied to Example \ref{ex:G_as_YD}, the corollary gives an invertible braiding 
\begin{align*}
c^{YD}_{H,H} ( h \otimes g) &= g \otimes gh g^{-1} & \forall\: g,h \in G
\end{align*} 
for $H=\k G.$ One recognizes (the linearization of) a familiar braiding for groups, which can alternatively be obtained using the machinery of self-distributive structures.
\end{example}

\subsection{A category inclusion}\label{sec:inclusion}

The braided categories constructed in the two previous sections exhibit apparent similarities. We explain them here by interpreting the category $_H\!\Mod$ of left modules over a quasi-triangular bialgebra $H$ in a symmetric category $\CC$ as a full braided subcategory of the category ${_H}\! \YD ^H$ of left-right Yetter-Drinfel$'$d modules over $H.$ We further introduce a weaker notion of R-matrix for which the above category inclusion still holds true, in general without respecting the monoidal structures. In particular, if one is interested only in constructing solutions to the Yang-Baxter equation (cf. Corollaries \ref{thm:R_br_local} and \ref{thm:YD_br_local}), this weaker notion suffices.

\medskip

Let $(\mu,\nu)$ and $(\Delta,\varepsilon)$ be a UAA and, respectively, a coUAA structures on an object $H$ of $\CC.$ A preliminary remark is first due.

\begin{remark}\label{rmk:not_bialg}
The definition of Yetter-Drinfel$'$d module actually requires (not necessarily compatible) UAA and coUAA structures on $H$ only. The category ${_H}\! \YD ^H$ is no longer monoidal in this setting. However, a direct verification shows that Corollary \ref{thm:YD_br_local} still holds true (the argument closely repeats that from the proof of Theorem \ref{thm:YD_braided} , point 2).
\end{remark}

We thus do not suppose $H$ to be a bialgebra unless explicitely specified.

Take a left module $(M,\lambda)$ over the algebra $H$ and a morphism 
$R: \II \rightarrow H \otimes H.$
Put, as on Fig. \ref{pic:RActionGivesCoaction},
$$ \delta^R:=c_{H,M} \circ (\Id_H \otimes \lambda ) \circ (R \otimes \Id_M)  :M \rightarrow M \otimes H.$$
\begin{center}
\begin{tikzpicture}[scale=0.5]
 \node at (-3,1.5) {$ \delta^R:=$};
 \draw [colorM, thick] (0,0)  -- (0,3);
 \draw [rounded corners] (-1,1)  -- (1,3);
 \draw (-1,1) -- (0,1.5);
 \fill [teal]  (0,1.5) circle (0.2);
 \fill [palegreen] (-1,1) circle (0.15);
 \draw (-1,1) circle (0.15);
 \node at (-1,1) [below left] {$R$};
 \node at (0,1.5) [right] {$\lambda$};
\end{tikzpicture}
   \captionof{figure}{Module $+$  R-matrix $\longmapsto$ comodule}\label{pic:RActionGivesCoaction}
\end{center}

Now try to determine conditions on $R$ which make $(M,\lambda,\delta^R)$ a left-right Yetter-Drinfel$'$d module for any $M.$ One arrives to the following set of axioms:

\begin{definition}\label{def:weak_R}
 A morphism $R: \II \rightarrow H \otimes H$ is called a \emph{weak R-matrix} 
 for a UAA and coUAA object $(H,\mu,\nu,\Delta,\varepsilon)$ in $\CC$ if (cf. Fig. \ref{pic:WeakRMatrix})
 \begin{enumerate}
\item\label{item:Delta} $(\Delta \otimes \Id_H) \circ R = (\Id_{H\otimes H} \otimes \mu) \circ c^2 \circ (R \otimes R),$
\item\label{item:eps} $(\varepsilon \otimes \Id_H ) \circ R = \nu,$
\item $\mu_{H\otimes H} \circ (R \otimes \Delta) = \mu_{H\otimes H} \circ (\Delta^{op} \otimes R).$
\end{enumerate}
\end{definition}

\begin{center}
\begin{tikzpicture}[scale=0.5]
 \node at (-2,1)  {};
 \draw [rounded corners] (1,2) -- (1,1) -- (0,0);
 \draw  (-1,1) -- (0,0);
 \draw  [rounded corners] (-1,1) -- (-0.5,1.5) -- (-0.5,2);
 \draw  [rounded corners] (-1,1) -- (-1.5,1.5) -- (-1.5,2);
 \node at (2,1)  {$=$};
 \fill [teal]  (-1,1) circle (0.15);
 \fill [palegreen] (0,0) circle (0.15);
 \draw (0,0) circle (0.15);
 \node at (0,0) [below] {$R$};
 \node at (-1,1) [below] {$\Delta$};
\end{tikzpicture}
\begin{tikzpicture}[scale=0.5]
 \draw [rounded corners] (-1,2) -- (-1,1) -- (0,0);
 \draw [rounded corners] (-2,2) -- (-2,1) -- (-1,0);
 \draw  (1,2) -- (1,1) --(0,0);
 \draw  (1,1) -- (-1,0);
 \node at (2.5,0)  {,};
 \fill [teal]  (1,1) circle (0.15);
 \fill [palegreen] (0,0) circle (0.15);
 \draw (0,0) circle (0.15);
 \node at (0,0) [below] {$R$};
 \fill [palegreen] (-1,0) circle (0.15);
 \draw (-1,0) circle (0.15);
 \node at (-1,0) [below] {$R$};
 \node at (1,1) [below] {$\mu$};
\end{tikzpicture}
\begin{tikzpicture}[scale=0.5]
 \node at (-2.5,1)  {};
 \draw [rounded corners] (1,2) -- (1,1) -- (0,0);
 \draw [rounded corners] (-0.5,1) -- (-0.5,0.5) --  (0,0);
 \node at (2,1)  {$=$};
 \draw  (3,1) -- (3,2);
 \fill [orange]  (-0.5,1) circle (0.15);
 \fill [palegreen] (0,0) circle (0.15);
 \draw (0,0) circle (0.15);
 \node at (0,0) [below] {$R$};
 \node at (-0.5,1) [above] {$\varepsilon$};
 \fill [orange]  (3,1) circle (0.15);
 \node at (3,1) [right] {$\nu$};
 \node at (4,0)  {,};
\end{tikzpicture}
\begin{tikzpicture}[scale=0.6]
 \node at (-2,1)  {};
 \draw [rounded corners] (2,2) -- (2,1.3) -- (2,0.5) -- (1.5,0) -- (1.5,-0.5);
 \draw [rounded corners] (1,2) -- (1,1.3) -- (1,0.5) -- (1.5,0);
 \draw  (0,1) -- (1,1.5);
 \draw  (0,1) -- (2,1.5);
 \node at (3.5,1)  {$=$};
 \fill [teal]  (1.5,0) circle (0.15);
 \fill [teal]  (1,1.5) circle (0.15);
 \fill [teal]  (2,1.5) circle (0.15);
 \fill [palegreen] (0,1) circle (0.15);
 \draw (0,1) circle (0.15);
 \node at (0,1) [below] {$R$};
 \node at (1.5,0) [left] {$\Delta$};
 \node at (1,1.5) [above right] {$\mu$};
 \node at (2,1.5) [above right] {$\mu$};
\end{tikzpicture}
\begin{tikzpicture}[scale=0.6]
 \node at (0.5,1)  {};
 \draw [rounded corners] (2,2) -- (2,1.3) -- (1,0.5) -- (1.5,0) -- (1.5,-0.5);
 \draw [rounded corners] (1,2) -- (1,1.3) -- (2,0.5) -- (1.5,0);
 \draw  (3,1) -- (1,1.5);
 \draw  (3,1) -- (2,1.5);
 \node at (3.5,0)  {.};
 \fill [teal]  (1.5,0) circle (0.15);
 \fill [teal]  (1,1.5) circle (0.15);
 \fill [teal]  (2,1.5) circle (0.15);
 \fill [palegreen] (3,1) circle (0.15);
 \draw (3,1) circle (0.15);
 \node at (3,1) [below] {$R$};
 \node at (1.5,0) [left] {$\Delta$};
 \node at (1,1.5) [above left] {$\mu$};
 \node at (2,1.5) [above left] {$\mu$};
\end{tikzpicture}
   \captionof{figure}{Axioms for a weak R-matrix}\label{pic:WeakRMatrix}
\end{center}

One can informally interpret the first two conditions by saying that $R$ provides a duality between the UAA $(H,\mu,\nu)$ on the right and the coUAA $(H,\Delta,\varepsilon)$ on the left.

\begin{remark}\label{rmk:R_inv}
If the weak R-matrix is invertible, then axiom \ref{item:eps} is a consequence of \ref{item:Delta}: apply $\Id_H \otimes \varepsilon \otimes \Id_H$ to both sides, then multiply by $R^{-1}$ on the left and apply $\varepsilon \otimes \Id_H.$
\end{remark}

We also need the following notion:
\begin{definition}\label{def:strong_R}
 A \emph{strong R-matrix} is a weak R-matrix satisfying two additional axioms (Fig. \ref{pic:StrongRMatrix}):
\begin{enumerate}[label=\arabic*'.]
\item\label{item:Delta'} $(\Id_H \otimes \Delta) \circ R = (\mu^{op} \otimes \Id_{H\otimes H}) \circ c^2 \circ (R \otimes R),$
\item\label{item:eps'} $(\Id_H \otimes \varepsilon) \circ R = \nu.$
\end{enumerate}
\end{definition}

\begin{center}
\begin{tikzpicture}[scale=0.5]
 \draw [rounded corners] (-1,2) -- (-1,1) -- (0,0);
 \draw  (1,1) -- (0,0);
 \draw  [rounded corners] (1,1) -- (0.5,1.5) -- (0.5,2);
 \draw  [rounded corners] (1,1) -- (1.5,1.5) -- (1.5,2);
 \node at (3,1)  {$=$};
 \fill [teal]  (1,1) circle (0.15);
 \fill [palegreen] (0,0) circle (0.15);
 \draw (0,0) circle (0.15);
 \node at (0,0) [below] {$R$};
 \node at (1,1) [below] {$\Delta$};
\end{tikzpicture}
\begin{tikzpicture}[scale=0.5]
 \draw [rounded corners] (1,2) -- (1,1) -- (0,0);
 \draw [rounded corners] (2,2) -- (2,1) -- (1,0);
 \draw [rounded corners] (-1,2) -- (-1,1) --(0,0);
 \draw [rounded corners] (-1,1) -- (-1.3,0.75) -- (1,0);
 \node at (3.5,0)  {,};
 \fill [teal]  (-1,1) circle (0.15);
 \fill [palegreen] (0,0) circle (0.15);
 \draw (0,0) circle (0.15);
 \node at (0,0) [below] {$R$};
 \fill [palegreen] (1,0) circle (0.15);
 \draw (1,0) circle (0.15);
 \node at (1,0) [below] {$R$};
 \node at (-1,1) [below left] {$\mu$};
\end{tikzpicture}
\begin{tikzpicture}[scale=0.5]
 \node at (-2.5,1)  {};
 \draw [rounded corners] (-1,2) -- (-1,1) -- (0,0);
 \draw [rounded corners] (0.5,1) -- (0.5,0.5) --  (0,0);
 \node at (2,1)  {$=$};
 \draw  (3,1) -- (3,2);
 \fill [orange]  (0.5,1) circle (0.15);
 \fill [palegreen] (0,0) circle (0.15);
 \draw (0,0) circle (0.15);
 \node at (0,0) [below] {$R$};
 \node at (0.5,1) [above] {$\varepsilon$};
 \fill [orange]  (3,1) circle (0.15);
 \node at (3,1) [right] {$\nu$};
 \node at (3.5,0)  {.};
\end{tikzpicture}
   \captionof{figure}{Additional axioms for a strong R-matrix}\label{pic:StrongRMatrix}
\end{center}

A strong R-matrix satisfies all usual R-matrix axioms. Arguments similar to those from Remark \ref{rmk:R_inv} show that for invertible R-matrices the two notions coincide.

\medskip
As was hinted at above, a weak R-matrix for $H$ allows to upgrade a module structure over the algebra $H$ into a Yetter-Drinfel$'$d module structure:

\begin{theorem}\label{thm:RActionGivesCoaction}
Take a UAA and coUAA object $(H,\mu,\nu,\Delta,\varepsilon)$ in $\CC$ with a weak R-matrix $R.$ 
\begin{enumerate}
\item\label{item:R_to_YD} For any left $H$-module $(M,\lambda),$ the data $(M,\lambda,\delta^R)$ form a left-right YD module over $H.$ 
\item\label{item:R_to_YD_br} For any two $H$-modules (and hence YD modules) $(M,\lambda_M)$ and $(N,\lambda_N),$ the morphism $c^R_{M,N}$ from \eqref{eqn:R_br} coincides with $c^{YD}_{M,N}$ from \eqref{eqn:YD_br} and, for $N = M,$ defines a braiding for the object $M$ in $\CC.$
\item The category ${_H}\!\Mod$ can be seen as a full subcategory of ${_H}\!\YD^H$ via the inclusion
\begin{align*}
i_R: {_H}\!\Mod &\longhookrightarrow {_H}\!\YD^H,\\
(M,\lambda) &\longmapsto (M,\lambda,\delta^R).
\end{align*}
\item If $H$ is a bialgebra and $R$ is a strong R-matrix, then the functor $i_R$ is braided monoidal.
\end{enumerate}
\end{theorem}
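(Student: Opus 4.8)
The four parts build on one another, so the plan is to prove them in sequence, using only the module axioms \eqref{eqn:Amod}--\eqref{eqn:Amod'}, the naturality \eqref{eqn:Nat} and symmetry \eqref{eqn:BrSymm} of $c$, and the R-matrix axioms, with the heavy computation concentrated in part~4. For part~\ref{item:R_to_YD} I would first check that $\delta^R$ is a right $H$-comodule structure and then the compatibility \eqref{eqn:YD}. The counit axiom is quick: pushing $\varepsilon$ through $c_{H,M}$ by naturality \eqref{eqn:Nat} collapses the braiding to an identity and leaves $\lambda\circ(((\varepsilon\otimes\Id_H)\circ R)\otimes\Id_M)$, which becomes $\lambda\circ(\nu\otimes\Id_M)=\Id_M$ by axiom~\ref{item:eps} and the module unit \eqref{eqn:Amod'}. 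Coassociativity combines axiom~\ref{item:Delta}, which splits the braided-out leg of $R$ into two copies of $R$ joined by $\mu$, with the module associativity \eqref{eqn:Amod}, which converts that single $\mu$-action into two iterated coactions. The condition \eqref{eqn:YD} is the crux of this part: expanding both sides with $\delta=\delta^R$, cancelling a surplus pair of crossings via \eqref{eqn:BrSymm}, and transporting everything along $\lambda$ turns the identity into exactly the third weak R-matrix axiom, the two $R$'s there becoming the actions on the two $H$-legs. The only delicate point is the bookkeeping of which strand each leg of $R$ crosses.

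Parts~\ref{item:R_to_YD_br} and~3 are then largely formal. For part~\ref{item:R_to_YD_br}, substituting $\delta_N=\delta^R_N$ into \eqref{eqn:YD_br} and sliding the braidings past $\lambda$ and $R$ by naturality collapses $c^{YD}_{M,N}$ onto \eqref{eqn:R_br}; the braided-object claim for $N=M$ is then Corollary~\ref{thm:YD_br_local}, or Remark~\ref{rmk:not_bialg} when $H$ is not assumed to be a bialgebra. For part~3 the key observation is that $i_R$ is the identity on underlying morphisms, so faithfulness is immediate; for fullness one checks that every $H$-module morphism $\varphi\colon(M,\lambda_M)\to(N,\lambda_N)$ is automatically a morphism of the induced comodules. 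This is a one-line computation: using the module-morphism axiom \eqref{eqn:AmodMor} and then naturality of $c$, the composite $\delta^R_N\circ\varphi$ rewrites as $(\varphi\otimes\Id_H)\circ\delta^R_M$, so the comodule condition holds for free and the YD morphisms between $i_R(M)$ and $i_R(N)$ are precisely the $H$-module morphisms.

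Part~4 is the main obstacle, and it is where the strong axioms enter. Since $i_R$ is the identity on underlying objects and morphisms and the setting is strict, being braided monoidal reduces to checking that the relevant structure maps coincide. The module structures agree by construction: both $i_R(M\otimes N)$ and $i_R(M)\otimes i_R(N)$ carry $\lambda_{M\otimes N}$ of \eqref{eqn:tensor_mod}, reflecting the monoidality of $\for$. On the unit, $\delta^R_{\II}=(\Id_H\otimes\varepsilon)\circ R$ (using $\lambda_{\II}=\varepsilon$ and $c_{H,\II}=\Id_H$), which equals $\nu$ by the strong axiom~\ref{item:eps'} and so matches \eqref{eqn:unit_comod}. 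The real work is the identity $\delta^R_{M\otimes N}=\delta_{M\otimes N}$ of \eqref{eqn:tensor_comod}: on the left the second leg of the single $R$ feeds the action $\lambda_{M\otimes N}$, whose internal $\Delta$ therefore comultiplies it, producing $(\Id_H\otimes\Delta)\circ R$; the strong axiom~\ref{item:Delta'} rewrites this as two copies of $R$ with their outer legs merged by $\mu^{op}$, which is exactly the silhouette of \eqref{eqn:tensor_comod}. I expect this to be the most delicate step: one must route the four $H$-strands and their crossings correctly and use the symmetry \eqref{eqn:BrSymm} to cancel the surplus ones, so that the two actions on $M$ and $N$ and the single $\mu^{op}$ emerge in their prescribed positions. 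The braided condition is then free: as $i_R$ is the identity on morphisms, $i_R(c^R_{M,N})=c^R_{M,N}=c^{YD}_{M,N}$ by part~\ref{item:R_to_YD_br}, and the right-hand side is precisely the braiding of $i_R(M)\otimes i_R(N)$; here the hypothesis that $R$ is strong is what guarantees, as noted after Definition~\ref{def:strong_R}, that $H$ is genuinely quasi-triangular, so that the source braiding $c^R$ of Theorem~\ref{thm:R_br_global} exists.
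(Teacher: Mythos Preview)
Your proposal is correct and follows essentially the same approach as the paper's own proof: axioms~\ref{item:Delta} and~\ref{item:eps} yield the comodule structure and axiom~3 the YD compatibility in part~\ref{item:R_to_YD}; parts~\ref{item:R_to_YD_br} and~3 are handled formally via the definition of $\delta^R$ and naturality of $c$; and in part~4 both you and the paper reduce the comparison of $\delta^R_{M\otimes N}$ with $\delta_{M\otimes N}$ to the strong axiom~\ref{item:Delta'} (and the unit case to~\ref{item:eps'}), then invoke part~\ref{item:R_to_YD_br} for the braided compatibility. Your write-up is slightly more explicit about where the symmetry \eqref{eqn:BrSymm} is used, but the architecture is the same.
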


\begin{proof}
\begin{enumerate}
\item The first two conditions from the definition of weak R-matrix guarantee that $\delta^R$ defines a coUAA comodule, while the last one implies the YD compatibility \eqref{eqn:YD}. 

\item The equality of the two morphisms follows from the choice of $\delta^R.$ The fact that $c^{YD}_{M,M}$ is a braiding was noticed in Remark \ref{rmk:not_bialg}.

\item We have seen in point \ref{item:R_to_YD} that $i_R$ is well defined on objects. Further, using the definition of $\delta^R$ and the naturality of $c,$ one checks that a morphism in $\CC$ respecting module structures necessarily respects comodule structures defined by $\delta^R.$ Thus $i_R$ is well defined, full and faithful on morphisms.

\item Let us now show that, under the additional conditions of this point, $i_R$ respects monoidal structures. Take two $H$-modules $(M,\lambda_M)$ and $(N,\lambda_N).$ The $H$-module structure $\lambda_{M \otimes N}$ on $M \otimes N$ is given by \eqref{eqn:tensor_mod}. The functor $i_R$ transforms it to a YD module over $H,$ with as $H$-comodule structure
\begin{align*}
\delta^R_{M \otimes N} &=c_{H,M \otimes N} \circ (\Id_H \otimes \lambda_{M \otimes N}) \circ (R \otimes \Id_{M \otimes N})\\
&= c_{H,M \otimes N} \circ (\Id_H \otimes \lambda_{M} \otimes \lambda_{N})\circ (\Id_{H\otimes H} \otimes c_{H,M} \otimes \Id_{N})  \\
&\qquad\qquad \circ (({\color{red}(\Id_H \otimes \Delta)\circ R}) \otimes \Id_{M \otimes N}).
\end{align*}
Now in ${_H}\!\YD^H$ the tensor product of $(M,\lambda_M,\delta^R_M)$ and $(N,\lambda_N,\delta^R_N)$ has  
an $H$-module structure given by $\lambda_{M \otimes N},$ and an $H$-comodule structure given by \eqref{eqn:tensor_comod}:
\begin{align*}
\delta_{M \otimes N} &= (\Id_{M \otimes N} \otimes \mu^{op}) \circ (\Id_M \otimes c_{H,N} \otimes \Id_H)\circ (\delta^R_M \otimes \delta^R_N)\\
&= (\Id_{M \otimes N} \otimes \mu^{op}) \circ (\Id_M \otimes c_{H,N} \otimes \Id_H) \\
&\qquad\qquad 
\circ (c_{H,M} \otimes c_{H,N})
\circ (\Id_H \otimes \lambda_M \otimes \Id_H \otimes \lambda_N)
\circ (R \otimes \Id_M \otimes R \otimes \Id_N) \\
 &= c_{H,M \otimes N} \circ (\Id_H \otimes \lambda_{M} \otimes \lambda_{N})\circ (\Id_{H\otimes H} \otimes c_{H,M} \otimes \Id_{N})  \\
&\qquad\qquad \circ (({\color{red} (\mu^{op} \otimes \Id_{H\otimes H}) \circ c^2 \circ (R \otimes R)}) \otimes \Id_{M \otimes N}).
\end{align*}
The reader is advised to draw diagrams in order to better follow these calculations. 
Now, axiom \ref{item:Delta'} from the definition of a strong R-matrix is precisely what is needed for the two YD structures on $M \otimes N$ to coincide.

A similar comparison of the standard $H$-comodule structure on the unit object $\II$ of $\CC$ with the one induced by $R$ shows that they coincide if and only if axiom \ref{item:eps'} is verified.

Point \ref{item:R_to_YD_br} shows that $i_R$ also respects braidings, allowing one to conclude. \qedhere
\end{enumerate}
\end{proof}

Note that in Point \ref{item:R_to_YD_br}, $c^{R}_{M,N} = c^{YD}_{M,N}$ is a morphism in $\CC$ and not in ${_H}\!\Mod$  in general, since the $H$-module structure on $M \otimes N$ is not even defined if $H$ is not a bialgebra.

In the proof of the theorem one clearly sees that the full set of strong R-matrix axioms is necessary only if one wants to construct braided monoidal categories, while the notion of weak R-matrix suffices if one is interested in the ``local'' structure of objects in $\CC$ (in particular, in solutions to the YBE) only. 

The relations between different structures from the above theorems can be presented in the following charts (in each of them one starts with a UAA and coUAA $H$ and a morphism $R: \II \rightarrow H \otimes H$):

\medskip
\begin{center}\label{charts}
  $ \xymatrix @!0 @R=1em @C=9pc{
    H \text{ is a bialgebra, }  &&  
    c^R \text{ is a braiding } &
    c^R_{M,M} \text{ is a braiding }\\
     R \text{ is a strong R-matrix }    
     &&  
    \text{ on } {_H}\!\Mod  &
    \text{ for } M \text{ in } {_H}\!\Mod\\
    &&&\\
     & {_H}\!\Mod \text{ is a full monoidal } &&\\
     & \text{ subcategory of } {_H}\!\YD^H &&
     \\&&&\\&&&
    \save "1,1"."2,1"!C="b1"*[F]\frm{}\restore   
    \save "4,2"."5,2"!C="b2"*[F]\frm{}\restore 
    \save "1,3"."2,3"!C="b3"*[F]\frm{}\restore   
    \save "1,4"."2,4"!C="b4"*[F]\frm{}\restore         
    \ar@{=>}^{\text{Thm \ref{thm:R_br_global}}}"b1";"b3" 
    \ar@{=>}^{\qquad \text{Thm \ref{thm:RActionGivesCoaction}}}"b1";"b2"   
    \ar@{=>}^{\text{Thm \ref{thm:YD_br_global}}\quad}"b2";"b3" 
    \ar@{=>}^{\text{Crl}\quad}_{\text{\ref{thm:R_br_local}}\quad}"b3";"b4"
}$

 $ \xymatrix @!0 @R=1em @C=12pc{
    R \text{ is a weak}  & 
    {_H}\!\Mod \text{ is a full} &
    c^R_{M,M} = c^{YD}_{M,M} \text{ is}\\
      \text{R-matrix }&
      \text{subcategory of } {_H}\!\YD^H &  
    \text{a braiding for } M \text{ in } \CC 
    \\&&\\&&
    \save "1,1"."2,1"!C="b1"*[F]\frm{}\restore   
    \save "1,2"."2,2"!C="b2"*[F]\frm{}\restore 
    \save "1,3"."2,3"!C="b3"*[F]\frm{}\restore   
    \ar@{=>}^{\text{Thm \ref{thm:RActionGivesCoaction}} \qquad}"b1";"b2" 
    \ar@{=>}^{\text{Rmk \ref{rmk:not_bialg}}}"b2";"b3" 
}$ 
\end{center} 
\medskip

Observe that if one disregards all the structural issues and limits oneself to the search of solutions to the YBE, one obtains that the only condition that should be imposed on $R$ so that \eqref{eqn:R_br} becomes a braiding is the following:
$$R_{23}*R_{13}*R_{12} = R_{12}*R_{13}*R_{23} : \II \rightarrow H \otimes H \otimes H,$$
where $ R_{12} := R \otimes \nu,$ $ R_{23} := \nu \otimes R,$ $ R_{13} :=(\Id_H \otimes c_{H,H}) \circ R_{12},$ and $*$ stands for the multiplication on $H \otimes H \otimes H$ defined by a formula analogous to \eqref{eqn:mu2}. This relation is sometimes called \emph{algebraic, or quantum, Yang-Baxter equation}. Other authors however reserve this term for \eqref{eqn:YB}.

\medskip

We finish by showing that in the Hopf algebra case, which is the most common in literature, the invertibility of a weak R-matrix is automatic (and thus a strong R-matrix is precisely the same thing as a usual R-matrix):

\begin{proposition}\label{thm:WeakRMatrixInvert}
If a {Hopf algebra} $H$ with the antipode $s$ is endowed with a weak R-matrix $R,$ then 
$$R^{-1}:=(s\otimes \Id_H ) \circ R$$
defines an inverse for $R,$ in the sense of \eqref{eqn:inv_of_R}.
\end{proposition}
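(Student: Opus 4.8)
The plan is to derive both equalities in \eqref{eqn:inv_of_R} directly from the first two weak R-matrix axioms together with the antipode identities \eqref{eqn:s}; interestingly, the third axiom will not be needed. Throughout I write $R^{-1} = (s\otimes\Id_H)\circ R$, and I keep in mind the Sweedler-style heuristic $R^{-1}\cdot R = \sum s(a_i)a_j\otimes b_ib_j$, which axiom \ref{item:Delta} will let me contract via the antipode.

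First I would establish the left inverse $\mu_{H\otimes H}\circ(R^{-1}\otimes R) = \nu\otimes\nu$. The idea is to compose axiom \ref{item:Delta}, namely $(\Delta\otimes\Id_H)\circ R = (\Id_{H\otimes H}\otimes\mu)\circ c^2\circ(R\otimes R)$, on the left with the morphism $P := (\mu\circ(s\otimes\Id_H))\otimes\Id_H \colon H^{\otimes 3}\to H^{\otimes 2}$. Applied to the left-hand side this produces $((\mu\circ(s\otimes\Id_H)\circ\Delta)\otimes\Id_H)\circ R$, and the antipode identity \eqref{eqn:s} collapses the bracket to $\nu\circ\varepsilon$; axiom \ref{item:eps} then finishes it as $(\nu\otimes\Id_H)\circ((\varepsilon\otimes\Id_H)\circ R) = (\nu\otimes\Id_H)\circ\nu = \nu\otimes\nu$. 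The remaining content is the bookkeeping identity that $P$ applied to the right-hand side of axiom \ref{item:Delta} reproduces exactly $\mu_{H\otimes H}\circ(R^{-1}\otimes R)$: unfolding $\mu_{H\otimes H} = (\mu\otimes\mu)\circ c^2$ and $R^{-1}\otimes R = (s\otimes\Id_H^{\otimes 3})\circ(R\otimes R)$, one slides $s$ (acting on the first tensor factor) past $c^2$ (acting on the second and third) by bifunctoriality of $\otimes$, and then refactors $(\mu\circ(s\otimes\Id_H))\otimes\mu = P\circ(\Id_{H\otimes H}\otimes\mu)$.

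For the right inverse $\mu_{H\otimes H}\circ(R\otimes R^{-1}) = \nu\otimes\nu$ I would run the mirror argument, composing axiom \ref{item:Delta} with $Q := (\mu\circ(\Id_H\otimes s))\otimes\Id_H$ and invoking the other antipode identity $\mu\circ(\Id_H\otimes s)\circ\Delta = \nu\circ\varepsilon$ from \eqref{eqn:s}. The one extra subtlety is that in recognizing $Q$ applied to the right-hand side of axiom \ref{item:Delta} as $\mu_{H\otimes H}\circ(R\otimes R^{-1})$, the antipode now sits on the third tensor factor, which $c^2$ does touch; to commute them I would use the naturality \eqref{eqn:Nat} of $c$ to slide $s$ through the crossing, i.e. $c_{H,H}\circ(s\otimes\Id_H) = (\Id_H\otimes s)\circ c_{H,H}$.

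I expect the only real obstacle to be this diagram bookkeeping — making the ``obvious'' computation categorically rigorous by correctly commuting the antipode past the symmetry (bifunctoriality on one side, naturality on the other). This is painless to do graphically, where \eqref{eqn:s} appears as an antipode cancelling against one leg of a comultiplication, so I would accompany the argument with a supporting diagram. Finally I would note that since only axioms \ref{item:Delta} and \ref{item:eps} enter, this confirms the claim preceding the statement that for a Hopf algebra a weak R-matrix is automatically invertible, whence strong R-matrices and ordinary R-matrices coincide in that case.
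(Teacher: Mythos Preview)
Your proposal is correct and follows exactly the paper's approach: apply $(\mu\otimes\Id_H)\circ(s\otimes\Id_H\otimes\Id_H)$, respectively $(\mu\otimes\Id_H)\circ(\Id_H\otimes s\otimes\Id_H)$, to both sides of axiom~\ref{item:Delta}, then use \eqref{eqn:s} and axiom~\ref{item:eps}. You have simply spelled out the bookkeeping (bifunctoriality and naturality of $c$ for commuting $s$ past $c^2$) that the paper leaves implicit.
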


\begin{proof}
Apply $(\mu \otimes \Id_H) \circ  (s \otimes \Id_H \otimes \Id_H ),$ or $(\mu \otimes \Id_H) \circ  ( \Id_H \otimes s \otimes \Id_H ),$ to both sides of the axiom \ref{item:Delta} from the definition of weak R-matrix. Axiom \ref{item:eps} and the definition \eqref{eqn:s} of the antipode allow one to conclude.
\end{proof}

\section{Yetter-Drinfel$'$d modules and the Yang-Baxter equation: the story continued}\label{sec:YD_YB}

\subsection{Braided systems}\label{sec:br_systems}

In order to describe further connections between Yetter-Drinfel$'$d modules and the Yang-Baxter equation, the following notion from \cite{Lebed2} will be useful:

\begin{definition}
\begin{itemize}
\item 
A \emph{braided system} in a monoidal category $\CC$ is an ordered finite family  $V_1,V_2,\ldots, V_r$ of objects in $\CC$ endowed with a \emph{braiding}, i.e. morphisms
 $\sigma_{i,j}:V_i \otimes V_j \rightarrow V_j \otimes V_i$ for $1\le \underline{i  \le j} \le r$  satisfying the \emph{colored Yang-Baxter equation} (= cYBE)
 \begin{equation}\label{eqn:cYB}\tag{cYBE}
 (\sigma_{j,k}\otimes \Id_i)\circ(\Id_j \otimes \sigma_{i,k})\circ(\sigma_{i,j}\otimes \Id_k) =(\Id_k \otimes \sigma_{i,j})\circ(\sigma_{i,k}\otimes \Id_j)\circ(\Id_i \otimes \sigma_{j,k})
\end{equation}
on all the tensor products $V_i \otimes V_j \otimes V_k$ with $1\le \underline{i \le j \le k} \le r.$ (Here $\Id_t$ stands for $\Id_{V_t},$ $\; 1\le t\le r.$) Such a system is denoted by $((V_i)_{1\le i\le r};(\sigma_{i,j})_{1\le i\le j\le r})$ or briefly $(\oV,\osigma).$
\item The \emph{rank} of a braided system is the number $r$ of its components.
\item A \emph{braided morphism} $\of:(\oV,\osigma) \rightarrow (\oW,\oxi)$ between two braided systems in $\CC$ of the same rank $r$ is a collection of morphisms $(f_i \in \Hom_{\CC}(V_i,W_i))_{1\le i \le r}$  respecting the braiding, i.e.
\begin{equation}\label{eqn:BrMor}
(f_j \otimes f_i) \circ \sigma_{i,j} = \xi_{i,j} \circ (f_i \otimes f_j) \quad \forall \; 1\le \underline{i  \le j} \le r.
\end{equation}
\item The category of rank $r$ braided systems and braided morphisms in $\CC$ is denoted by $\BrSyst_r(\CC).$
\end{itemize}
\end{definition}

This notion is a \emph{partial} generalization of that of braided object in $\CC,$ in the sense that a braiding is defined only on certain couples of objects (which is \underline{underlined} in the definition).

Graphically, the $\sigma_{i,j}$ component of a braiding is depicted on Fig. \ref{pic:BrSystem}.
According to the definition, one allows a strand to overcross only the strands colored with a smaller or equal index $i\in \{1,2,\ldots,r\}.$

\begin{center}
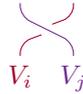

\begin{tikzpicture}[scale=0.7]
\draw [colori, rounded corners](0,0)--(0,0.25)--(0.4,0.4);
\draw [colori, rounded corners](0.6,0.6)--(1,0.75)--(1,1);
\draw [colorj, rounded corners](1,0)--(1,0.25)--(0,0.75)--(0,1);
\node  at (0,0) [colori, below] {$V_i$};
\node  at (1,0) [colorj, below] {$V_j$};
\end{tikzpicture}
\captionof{figure}{A braiding component}\label{pic:BrSystem}
\end{center}

Observe that each component of a braided system is a braided object in $\CC.$ 

Various examples of braided systems coming from algebraic considerations are presented in \cite{Lebed2}. Different aspects of those systems are studied in detail there, including their representation and homology theories, generalizing usual representation and homology theories for basic algebraic structures. 

In the next section we will describe a rank $3$ braided system constructed out of any Yetter-Drinfel$'$d module over a finite-dimensional $\k$-linear bialgebra. The braiding on this system captures all parts of the YD module structure. Several components of this braiding are inspired by the braiding $\crot^{YD}$ from \eqref{eqn:YD_br_alter}.

\subsection{Yetter-Drinfel$'$d module $\longmapsto$ braided system}\label{sec:YD_br_systems}

In this section we work in the category $\Vect$ of $\k$-vector spaces and $\k$-linear maps, endowed with the usual tensor product over $\k,$ with the unit $\k$ and with the \emph{flip}  
\begin{align}\label{eqn:flip}
c(v \otimes w) &= w \otimes v & \forall v \in V, w \in W
\end{align}
as symmetric braiding. Note however that one could stay in the general setting of a symmetric category and replace the property ``finite-dimensional'' with ``admitting a dual'' in what follow.

Recall the classical \emph{Sweedler's notation} without summation sign for comultiplications and coactions in $\Vect,$ used here and further in the paper:
\begin{align}
\Delta(h)&:=h_{(1)}\otimes h_{(2)} \in H\otimes H & \forall h \in H, \label{eqn:Sweedler}\\
\delta(m)&:=m_{(0)}\otimes m_{(1)} \in M\otimes H & \forall m \in M. \label{eqn:Sweedler'}
\end{align}

We now describe by giving explicit formulas a braided system constructed out of an arbitrary YD module. In Section \ref{sec:YD_systems_ex} this construction will be explained from a more conceptual viewpoint.

\begin{theorem}\label{thm:YDsystem_for_YDmod}
Let $H$ be a \underline{finite-dimensional} $\k$-linear bialgebra, and let $M$ be a Yetter-Drinfel$'$d module over $H.$ 
Then the rank $3$ system $(H,M,H^*)$ can be endowed with the following braiding:
\begin{align*}
\sigma_{H,H} &: h_1 \otimes h_2 \mapsto h_1 h_2 \otimes 1, 
& \sigma_{M,M} &: m_1 \otimes m_2 \mapsto m_1 \otimes m_2,\\
\sigma_{H^*,H^*} &: l_1 \otimes l_2 \mapsto \varepsilon \otimes l_1 l_2,
& \sigma_{H,M} &: h \otimes m  \mapsto h_{(2)} m \otimes h_{(1)},\\
\sigma_{H,H^*} &: h \otimes l  \mapsto l_{(1)}(h_{(2)}) \: l_{(2)} \otimes h_{(1)},
& \sigma_{M,H^*} &: m \otimes l  \mapsto l_{(1)}(m_{(1)}) \: l_{(2)} \otimes m_{(0)}.
\end{align*}
Here the signs for multiplication morphisms in $H$ and $H^*,$ as well as for the $H$-action on $M,$ are omitted for simplicity. Further, $\one$ denotes the unit of $H,$ i.e. $\nu(\alpha)= \alpha \one$ for all $\alpha \in \k.$
\end{theorem}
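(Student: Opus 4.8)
The statement asks me to verify that the six explicit maps $\sigma_{i,j}$ satisfy the colored Yang-Baxter equations \eqref{eqn:cYB} on every triple $V_i\otimes V_j\otimes V_k$ with $1\le i\le j\le k\le 3$, where $(V_1,V_2,V_3)=(H,M,H^*)$. The plan is to enumerate these triples and check the corresponding cYBE one by one. Since each index may repeat, the triples with $i\le j\le k$ are $(H,H,H)$, $(H,H,M)$, $(H,H,H^*)$, $(H,M,M)$, $(H,M,H^*)$, $(H,H^*,H^*)$, $(M,M,M)$, $(M,M,H^*)$, $(M,H^*,H^*)$, $(H^*,H^*,H^*)$ --- ten in total. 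A first organizing remark will cut this work considerably: several of these are already known. The triples $(H,H,H)$ and $(H^*,H^*,H^*)$ reduce to associativity/coassociativity (the braided-bialgebra relations \eqref{eqn:BrAlg}--\eqref{eqn:BrAlg'''}), $(H,H,H^*)$ and $(H,H^*,H^*)$ are exactly the cYBEs for the subsystem $(H,H^*)$ treated in \cite{Lebed2}, and $\sigma_{M,M}=\Id$ makes every triple involving only the index $2$ in the middle-or-outer position collapse trivially.

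Concretely, I would first dispatch the purely trivial cases: $(M,M,M)$ holds because $\sigma_{M,M}=\Id$, and any cYBE in which all three strands that interact are $M$-strands is the identity $=$ identity. Next I would record that $\sigma_{H,H}\colon h_1\otimes h_2\mapsto h_1h_2\otimes\one$ is the ``free associative'' braiding encoding $\mu$, so $(H,H,H)$ is associativity of $H$; dually $(H^*,H^*,H^*)$ is coassociativity (equivalently, associativity of the convolution product on $H^*$). Then I would cite \cite{Lebed2} for the bialgebra subsystem $(H,H^*)$, which settles $(H,H,H^*)$ and $(H,H^*,H^*)$. This leaves the genuinely new triples that involve $M$ nontrivially, namely $(H,H,M)$, $(H,M,M)$, $(H,M,H^*)$, $(M,M,H^*)$, and $(M,H^*,H^*)$.

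For these I would substitute the Sweedler-notation formulas and compute both sides of \eqref{eqn:cYB} as explicit maps, tracking each tensor factor. The triple $(H,H,M)$ should reduce to the left-module axiom \eqref{eqn:Amod} (compatibility of $\lambda$ with $\mu$), using coassociativity of $\Delta$ to match the two ways of comultiplying; $(M,H^*,H^*)$ is its comodule dual, using the coaction axiom and the convolution product on $H^*$. The cases $(H,M,M)$ and $(M,M,H^*)$ simplify since $\sigma_{M,M}=\Id$, so one side only permutes the two $M$-strands and the verification reduces to a compatibility of $\sigma_{H,M}$ (resp.\ $\sigma_{M,H^*}$) with itself across the two copies of $M$. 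I expect the single hard computation to be $(H,M,H^*)$: here all three braiding components are nontrivial and genuinely intertwined, and matching the two sides will force precisely the Yetter-Drinfel$'$d compatibility condition \eqref{eqn:YD} relating $\lambda$, $\delta$, $\mu$, and $\Delta$. This is the step where I anticipate the main bookkeeping obstacle, and where the finite-dimensionality hypothesis enters through the evaluation pairing $l(h)$ implicit in $\sigma_{H,H^*}$ and $\sigma_{M,H^*}$. Drawing the diagrams rather than manipulating Sweedler indices is likely to make the $(H,M,H^*)$ identification with \eqref{eqn:YD} transparent; indeed the author's later claim of \emph{precision} (each mixed cYBE equivalent to a YD axiom) suggests that these computations are not merely sufficient but tight, so I would keep track of equivalences rather than one-directional implications wherever convenient.
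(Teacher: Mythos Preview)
Your approach is correct and is precisely the direct-verification route the paper explicitly mentions (``one can simply verify by tedious but straightforward computations the $\binom{3+2}{2}=10$ instances of the cYBE'') but then chooses \emph{not} to carry out. One small sharpening: with $\sigma_{M,M}=\Id$, the triples $(H,M,M)$ and $(M,M,H^*)$ are not merely simplified but literally tautological---both sides of \eqref{eqn:cYB} become the same composite---so there is nothing to check there beyond writing the equation down.

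The paper's own proof takes a genuinely different, more structural route. Instead of checking the ten cYBEs individually, it introduces the abstract notion of a \emph{Yetter-Drinfel$'$d system} over $H$ (Definition~\ref{def:YDsystem}), proves once and for all that any such system carries a braiding of the shape $\crot^{YD}$ on off-diagonal components and $\sigma_{Ass}$ on diagonal ones (Theorem~\ref{thm:YD_braided}, itself resting on Proposition~\ref{thm:BrSystemUAA} which packages the naturality-with-respect-to-$\mu$ argument), and then verifies that $(H,M,H^*)$ is an instance of a YD system (Proposition~\ref{thm:YDsystem_ex}). The final step (Example~\ref{ex:YD_as_YD_system}) is your own observation that $\sigma_{M,M}$ may be swapped for $\Id$ without cost, which eliminates the need for a UAA structure on $M$. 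What your approach buys is self-containment and directness; what the paper's buys is an immediate generalization to $(H,M_1,\ldots,M_r,H^*)$ for several YD modules, and a uniform explanation of why each cYBE matches a specific YD-module-algebra axiom (later made precise in Proposition~\ref{thm:YDsyst_br_precision}, which you correctly anticipated).
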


Multiplication, comultiplication and other structures on $H^*$ used in the theorem are obtained from those on $H$ by \emph{duality}. For instance,
\begin{align}
l_1 l_2 (h):=\Delta^* (l_1 \otimes l_2) (h) &:= l_1(h_{(2)})l_2(h_{(1)}) & \forall l_1, l_2 \in H^*, h \in H,\label{eqn:Delta_dual}\\
\one_{H^*}(h):=(\varepsilon_H)^* (1) (h)&:= \varepsilon_H(h) & \forall  h \in H.\label{eqn:eps_dual}
\end{align}
See \cite{Lebed2} for some comments on an alternative definition of the duality between $H \otimes H$ and $H^* \otimes H^*,$ which gives a slightly different formula for $\Delta^*,$ more common in literature.

The multiplication $\Delta^*$ on $H^*$ is graphically depicted on Fig. \ref{pic:DualViaRainbow}. Here and afterwards dashed lines stand for $H^*,$ and $ev$ denotes one of the \emph{evaluation} maps
\begin{align*}
 ev :\qquad  H^*\otimes H & \longrightarrow \k, & \text{or} \qquad\qquad H\otimes H^* & \longrightarrow \k,\\
 l\otimes h & \longmapsto l(h); & h\otimes l & \longmapsto l(h).
\end{align*}

\begin{center}
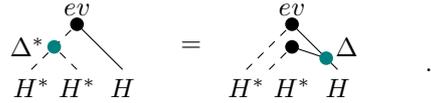

\begin{tikzpicture}[scale=0.6]
 \draw [dashed] (0,0) -- (1,1);
 \draw [dashed] (1,0) -- (0.5,0.5);
 \draw (1,1) -- (2,0);
 \node at (0,0) [below] {$H^*$};
 \node at (1,0) [below] {$H^*$};
 \node at (2,0) [below] {$H$};
 \node at (1,1) [above] {$ev$};
 \fill (1,1) circle (0.15);
 \fill [teal] (0.5,0.5) circle (0.15);
 \node at (0.5,0.5) [left] {$\Delta^*$};
 \node at (3.5,0.5) {$=$};
\end{tikzpicture}
\begin{tikzpicture}[scale=0.6]
 \draw [dashed] (0,0) -- (1,1);
 \draw [dashed] (0.5,0) -- (1,0.5);
 \draw (1,1) -- (2,0);
 \draw (1,0.5) -- (1.75,0.25);
 \node at (0,0) [below] {$H^*$};
 \node at (1,0) [below] {$H^*$};
 \node at (2,0) [below] {$H$};
 \node at (4,0) {.};
 \node at (1,1) [above] {$ev$};
 \fill (1,1) circle (0.15);
 \fill (1,0.5) circle (0.15);
 \fill [teal] (1.75,0.25) circle (0.15);
 \node at (1.75,0.5) [right] {$\Delta$};
\end{tikzpicture}
   \captionof{figure}{Dual structures on $H^*$ via the ``rainbow'' duality}\label{pic:DualViaRainbow}
\end{center}

All components of the braiding from the theorem are presented on Fig. \ref{pic:sigmaHMH*}.

\begin{center}
\begin{tikzpicture}[scale=1]
 \node at (-1,0.5) {$\sigma_{H,H} = $};
 \draw (0,1) -- (1,0);
 \draw (0,0) -- (0.5,0.5);
 \draw (0.7,0.7) -- (1,1);
 \node at (0.5,0.5) [left]{$\mu$};
 \node at (0.7,0.7) [right] {$\nu$}; 
 \fill[orange] (0.7,0.7) circle (0.075);
 \fill[teal] (0.5,0.5) circle (0.075);
 \node at (1.5,0) { $\qquad$ , $\qquad$ };
\end{tikzpicture}
\begin{tikzpicture}[scale=1]
 \node at (-2,0) {};
 \node at (-1,0.5) {$\sigma_{H,H^*} = \qquad $};
 \draw [dashed,rounded corners] (1,0)  -- (1,0.3) -- (0,1);
 \draw [rounded corners] (0,0) -- (0,0.3) -- (1,1);
 \draw [dashed,rounded corners] (0.5,0.45) -- (1,0.3) -- (1,0);
 \draw [rounded corners](0,0) -- (0,0.3) -- (0.5,0.45);
 \node at (0.5,0.45) [below] {$ev$};
 \fill (0.5,0.45) circle (0.05);
 \fill[teal] (0,0.3) circle (0.075);
 \fill[teal] (1,0.3) circle (0.075);
 \node at (0,0.3) [left] {$\Delta$};
 \node at (1,0.3) [right] {$\mu^*$};
 \node at (1.5,0) { $\qquad$ , $\qquad$ };
\end{tikzpicture}
\begin{tikzpicture}[scale=1]
 \node at (-2,0) {};
 \node at (-1,0.5) {$\sigma_{H^*,H^*} = \qquad $};
 \draw [dashed](0,0) -- (1,1);
 \draw [dashed](1,0) -- (0.5,0.5);
 \draw [dashed](0.3,0.7) -- (0,1);
 \node at (0.5,0.5) [right]{$\Delta^*$};
 \node at (0.3,0.7) [left] {$\varepsilon^*$}; 
 \fill[orange] (0.3,0.7) circle (0.075);
 \fill[teal] (0.5,0.5) circle (0.075);
 \node at (1.5,0) {,};
\end{tikzpicture}

\medskip
\begin{tikzpicture}[scale=1]
 \node at (-1,0.5) {$\sigma_{H,{M}} = \qquad $};
 \draw [colorM, thick] (1,0)  -- (0,1);
 \draw (0,0) -- (1,1);
 \draw (0.1,0.1) -- (0.7,0.3);
 \fill [teal]  (0.7,0.3) circle (0.075);
 \fill [teal]  (0.1,0.1) circle (0.05);
 \node at (0.7,0.3) [right] {$\lambda_M$};
 \node at (0.1,0.1) [above left] {$\Delta$};
 \node at (1.5,0) { $\qquad$ , $\qquad$ };
\end{tikzpicture}
\begin{tikzpicture}[scale=1]
 \node at (-2,0) {};
 \node at (-1,0.5) {$\sigma_{{M},{M}} = $};
 \draw [colorM, thick] (0,1) -- (0,0);
 \draw [colorM, thick] (1,1) -- (1,0);
 \node at (1.5,0) { $\qquad$ , $\qquad$ };
\end{tikzpicture}
\begin{tikzpicture}[scale=1]
 \node at (-1,0.5) {$\sigma_{{M},H^*} = \qquad$};
 \draw [colorM, thick] (0,0) -- (1,1);
 \draw [dashed](1,0) -- (0,1);
 \draw [dashed] (0.5,0.3) -- (0.9,0.1);
 \draw (0.1,0.1) -- (0.5,0.3);
 \node at (0.5,0.3) [below] {$ev$};
 \fill (0.5,0.3) circle (0.075); 
 \node at (0.9,0.1) [above right]{$\mu^*$};
 \node at (0.1,0.1) [above left] {$\delta_M$}; 
 \fill[teal] (0.1,0.1) circle (0.075);
 \fill[teal] (0.9,0.1) circle (0.05);
 \node at (1.5,0) {.};
\end{tikzpicture}
   
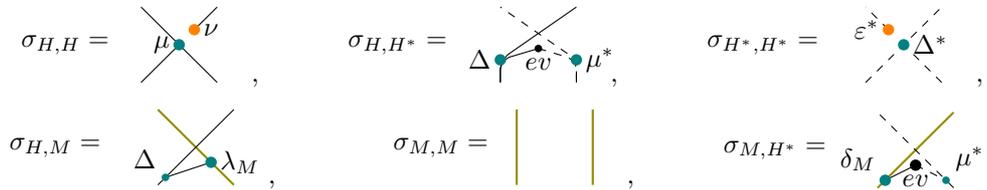
\captionof{figure}{A braiding for the system $(H,{M},H^*)$}\label{pic:sigmaHMH*}
\end{center}

In order to prove the theorem, one can simply verify by tedious but straightforward computations the $\binom{3+2}{2}=10$ instances of the cYBE involved in the definition of rank $3$ braided system. A more conceptual proof will be given in the more general setting of Sections \ref{sec:YD_systems}-\ref{sec:YD_systems_ex} (cf. Example \ref{ex:YD_as_YD_system}). In subsequent sections we will also study functoriality, precision and homology questions for this braided system.

\subsection{Yetter-Drinfel$'$d systems: definition and braided structure}\label{sec:YD_systems}

We introduce here the notion of \emph{Yetter-Drinfel$'$d system} in a \underline{symmetric} category $\CC$ and show how to endow it with a braiding. The braided system from Section \ref{sec:YD_br_systems} turns out to be a particular case of this general construction. In Section \ref{sec:YD_systems_ex} we will consider other particular cases, namely a braided system encoding the bialgabra structure (cf. \cite{Lebed2}) and a braided system of several Yetter-Drinfel$'$d modules over a common bialgebra $H.$ The latter will lead to a ``braided'' interpretation of the monoidal structures on the category ${_H}\! \YD ^H$ (Section \ref{sec:YD_tensor}).

\medskip

Let an object $H$ in $\CC$ be endowed with a UAA structure $(\mu_H,\nu_H)$ and a coUAA structure $(\Delta_H,\varepsilon_H),$ a priori not compatible. 

\begin{definition}\label{def:YDAlg}
\begin{itemize}
\item A \emph{(left-right) Yetter-Drinfel$'$d module algebra} over $H$ is the datum of a UAA structure $(\mu,\nu)$ and a YD structure $(\lambda,\delta)$ on an object $V$ in $\CC,$ such that $\mu$ and $\nu$ are morphisms of YD modules (see \eqref{eqn:tensor_mod_alter}, \eqref{eqn:tensor_comod_alter} and \eqref{eqn:unit_mod}, \eqref{eqn:unit_comod} for the $H$-(co)module structure of $V \otimes V$ and of $\II$):
\begin{align}
\delta \circ \mu &= (\mu \otimes \mu_H) \circ (\Id_V \otimes c_{H,V} \otimes \Id_H) \circ (\delta \otimes \delta), \label{eqn:YDAlg}\\
\lambda \circ (\Id_H \otimes \mu) &= \mu \circ (\lambda \otimes \lambda) \circ (\Id_H \otimes c_{H,V} \otimes \Id_V) \circ (\Delta_H^{op} \otimes \Id_{V \otimes V}), \label{eqn:YDAlg'}\\
\delta \circ \nu &= \nu \otimes \nu_H, \label{eqn:YDAlg''}\\
\lambda \circ (\Id_H \otimes \nu) &= \varepsilon_H \otimes \nu. \label{eqn:YDAlg'''}
\end{align}

\item The category of YD module algebras over $H$ in $\CC$ (with as morphisms those which are simultaneously UAA and YD module morphisms) is denoted by ${_H}\! \YDAlg ^H.$

\item Omitting the comodule structure $\delta$ from the definition, one gets the notion of \emph{$H$-module algebra}. \emph{$H$-comodule algebras} are defined similarly.
\end{itemize}
\end{definition}

The name \emph{$H^{cop}$-module algebra} would be more appropriate than \emph{$H$-module algebra} since we use the twisted compatibility condition \eqref{eqn:YDAlg'}. However we opt for the simpler notation since it causes no confusion in what follows.

For the reader's convenience, we give on Fig. \ref{pic:CompatYD} a graphical form of the compatibility relations \eqref{eqn:YDAlg}-\eqref{eqn:YDAlg'''}.
 Here and afterwards we use thick colored lines for the module $V$, and thin black lines for $H.$

\begin{center}
\begin{tikzpicture}[scale=0.5]
 \draw[colorM, thick] (0,1) -- (1,2);
 \draw[colorM, thick] (2,1) -- (1,2);
 \draw[colorM, thick] (1,4) -- (1,2);
 \draw (1,3) -- (2,4);
 \node at (1,2) [left]{$\mu$};
 \node at (1,3) [left]{$\delta$};
 \node at (2,1) [below] {$V$};
 \node at (0,1) [below] {$V$};
 \node at (1,4) [above] {$V$}; 
 \node at (2,4) [above] {$H$};
 \fill[teal] (1,2) circle (0.2);
 \fill[teal] (1,3) circle (0.2);
\node  at (3,2.5){$=$};
\end{tikzpicture}
\begin{tikzpicture}[scale=0.5]
 \draw[colorM, thick] (0,0) -- (1,2);
 \draw[colorM, thick] (2,0) -- (1,2);
 \draw[colorM, thick] (1,3) -- (1,2);
 \draw (0.5,1) -- (2,2);
 \draw (1.5,1) -- (2,2);
 \draw (2,2) -- (2,3);  
 \node at (1,2) [above left]{$\mu$};
 \node at (2,2) [above]{$\mu_H$};
 \node at (0.5,1) [left]{$\delta$};
 \node at (1.5,1) [right]{$\delta$};
 \node at (2,0) [below] {$V$};
 \node at (0,0) [below] {$V$};
 \node at (1,3) [above] {$V$}; 
 \node at (2,3) [above] {$H$};
 \fill[teal] (1,2) circle (0.2);
 \fill[teal] (2,2) circle (0.15);
 \fill[teal] (0.5,1) circle (0.2);
 \fill[teal] (1.5,1) circle (0.2);
 \draw[red,dotted] (3,-0.5) -- (3,3.5);
\end{tikzpicture}
\begin{tikzpicture}[scale=0.5]
 \draw[colorM, thick] (0,1) -- (1,2);
 \draw[colorM, thick] (2,1) -- (1,2);
 \draw[colorM, thick] (1,4) -- (1,2);
 \draw (1,3) -- (-1,1);
 \node at (1,2) [right]{$\mu$};
 \node at (1,3) [right]{$\lambda$};
 \node at (2,1) [below] {$V$};
 \node at (0,1) [below] {$V$};
 \node at (1,4) [above] {$V$}; 
 \node at (-1,1) [below] {$H$};
 \fill[teal] (1,2) circle (0.2);
 \fill[teal] (1,3) circle (0.2);
 \node  at (3.5,2.5){$=$};
\end{tikzpicture}
\begin{tikzpicture}[scale=0.5]
 \draw[colorM, thick] (0,-0.5) -- (0.5,1) -- (1,2);
 \draw[colorM, thick] (2,-0.5) -- (1.5,1) -- (1,2);
 \draw[colorM, thick] (1,2.5) -- (1,2);
 \draw (0.5,1) -- (-0.5,-0.2);
 \draw (-0.5,-0.2) -- (-0.5,-0.5);
 \draw [rounded corners] (1.5,1) -- (-0.75,0.25) -- (-0.5,-0.2);
 \node at (1,2) [left]{$\mu$};
 \node at (-0.3,0) [left]{$\Delta_H$};
 \node at (0.5,1) [left]{$\lambda$};
 \node at (1.5,1) [right]{$\lambda$};
 \node at (2,-0.5) [below] {$V$};
 \node at (0,-0.5) [below] {$V$};
 \node at (1,2.5) [above] {$V$}; 
 \node at (-1,-0.5) [below] {$H$};
 \fill[teal] (1,2) circle (0.2);
 \fill[teal] (-0.5,-0.2) circle (0.15);
 \fill[teal] (0.5,1) circle (0.2);
 \fill[teal] (1.5,1) circle (0.2);
 \draw[red,dotted] (3,-1) -- (3,3);
\end{tikzpicture}
\begin{tikzpicture}[scale=0.4]
 \node  at (1,0){};
 \draw[colorM, thick] (1,4) -- (1,2);
 \draw (1,3) -- (2,4);
 \node at (1,2) [below]{$\nu$};
 \node at (1,3) [right]{$\delta$};
 \node at (1,4) [above] {$V$}; 
 \node at (2,4) [above] {$H$};
 \fill[orange] (1,2) circle (0.2);
 \fill[teal] (1,3) circle (0.2);
 \node  at (3.5,2.5){$=$};
\end{tikzpicture}
\begin{tikzpicture}[scale=0.4]
 \node  at (1,0){};
 \draw[colorM, thick] (1,4) -- (1,2);
 \draw (2,2) -- (2,4);
 \node at (1,2) [below]{$\nu$};
 \node at (2,2) [below]{$\nu_H$};
 \node at (1,4) [above] {$V$}; 
 \node at (2,4) [above] {$H$};
 \fill[orange] (1,2) circle (0.2);
 \fill[orange] (2,2) circle (0.15);
 \draw[red,dotted] (3,0.5) -- (3,5.5);
\end{tikzpicture}
\begin{tikzpicture}[scale=0.4]
 \node  at (0.5,2){};
 \draw[colorM, thick] (1,4) -- (1,2);
 \draw (1,3) -- (0,1.5);
 \node at (1,2) [below]{$\nu$};
 \node at (1,3) [left]{$\lambda$};
 \node at (1,4) [above] {$V$}; 
 \node at (0,1.5) [below] {$H$};
 \fill[orange] (1,2) circle (0.2);
 \fill[teal] (1,3) circle (0.2);
\node  at (2.5,3){$=$};
\end{tikzpicture}
\begin{tikzpicture}[scale=0.4]
 \draw[colorM, thick] (1,4) -- (1,2);
 \draw (0,1.5) -- (0,3);
 \node at (1,2) [below]{$\nu$};
 \node at (0,3) [above]{$\varepsilon_H$};
 \node at (1,4) [above] {$V$}; 
 \node at (0,1.5) [below] {$H$};
 \fill[orange] (1,2) circle (0.2);
 \fill[orange] (0,3) circle (0.15);
\end{tikzpicture}
   
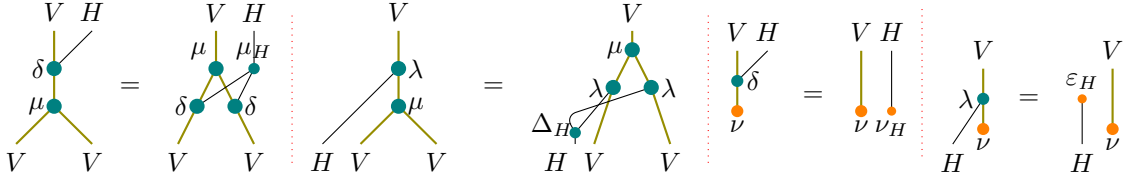
\captionof{figure}{Compatibilities between UAA and YD structures}\label{pic:CompatYD}
\end{center}

\begin{definition}\label{def:YDsystem}
A \emph{(left-right) Yetter-Drinfel$'$d system} over $H$ (= $H$-YD system) in $\CC$ is an ordered finite family $V_1,V_2,\ldots, V_r$ of objects endowed with the following structure:
\begin{enumerate}
\item $(V_1,\mu_1,\nu_1,\delta_1)$ is an $H$-comodule algebra;
\item $(V_r,\mu_r,\nu_r,\lambda_r)$ is an $H$-module algebra;
\item $(V_i,\mu_i,\nu_i,\lambda_i,\delta_i)$ is a YD module algebra over $H$ for all $1 < i < r.$ 
\end{enumerate}
\end{definition}

Now we show how to endow a YD system with a braiding:

\begin{theorem}\label{thm:YD_braided} 
A braiding can be defined on a Yetter-Drinfel$'$d system $(V_1,\ldots V_r)$ over $H$ by
\begin{align*}
 \sigma_{i,i} &:= \nu_i \otimes \mu_i \text{ or }  \mu_i \otimes \nu_i: V_i \otimes V_i  \rightarrow V_i \otimes V_i, \\
 \sigma_{i,j} &:= \crot^{YD}_{V_i,V_j}= c_{V_i,V_j} \circ (\Id_{V_i} \otimes \lambda_j) \circ (\delta_i \otimes \Id_{V_j}) : V_i \otimes V_j \rightarrow V_j \otimes V_i, \; i < j. 
\end{align*}

The $\sigma_{i,j}$ components of this braiding with $i < j$ are invertible if $H$ is a Hopf algebra.
\end{theorem}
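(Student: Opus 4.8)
The plan is to verify the colored Yang-Baxter equation \eqref{eqn:cYB} on each triple $V_i \otimes V_j \otimes V_k$ with $i \le j \le k$, organized by how many of the three indices coincide, and then to treat invertibility separately. The guiding observation is that a component $\sigma_{i,j}$ with $i<j$ draws on the smaller object only through its coaction $\delta_i$ and on the larger only through its action $\lambda_j$. Hence in a \emph{strictly increasing} triple $i<j<k$ the smallest object enters solely via $\delta_i$, the largest solely via $\lambda_k$, and the middle object via both $\delta_j$ and $\lambda_j$. Since $1\le i<j<k\le r$ forces $1<j<r$, the middle object is always a genuine YD module algebra (so it has the YD compatibility \eqref{eqn:YD}), while the extreme objects, even if they are the comodule algebra $V_1$ or the module algebra $V_r$, are never asked for structure they lack. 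This is precisely why Definition \ref{def:YDsystem} places the comodule algebra first and the module algebra last, and it disposes in advance of all well-definedness worries.

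First I would settle the diagonal equation $i=j=k$: only $\sigma_{i,i}=\mu_i\otimes\nu_i$ (or $\nu_i\otimes\mu_i$) occurs, and a direct computation shows that \eqref{eqn:YB} for this map is equivalent to the associativity and unit axioms of the UAA $(V_i,\mu_i,\nu_i)$; this is the standard braiding attached to a unital associative algebra (cf. \cite{Lebed}, \cite{Lebed2}). Next, the two ``doubled'' patterns each collapse to a single compatibility axiom. For $i=j<k$ only $\delta_i,\mu_i,\nu_i$ and $\lambda_k$ intervene; expanding \eqref{eqn:cYB} in Sweedler form and using associativity of $\lambda_k$, one is left needing $\delta_i(ab)=a_{(0)}b_{(0)}\otimes a_{(1)}b_{(1)}$ and $\delta_i(1)=1\otimes 1_H$, i.e. exactly the $H$-comodule algebra axioms \eqref{eqn:YDAlg}, \eqref{eqn:YDAlg''} of $V_i$. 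Symmetrically, for $i<j=k$ only $\delta_i,\lambda_j,\mu_j,\nu_j$ intervene, and \eqref{eqn:cYB} reduces to $h\cdot(bc)=(h_{(2)}\cdot b)(h_{(1)}\cdot c)$ and $h\cdot 1=\varepsilon(h)1$, namely the $H$-module algebra axioms \eqref{eqn:YDAlg'}, \eqref{eqn:YDAlg'''} of $V_j$ (the $\Delta_H^{op}$ twist built into \eqref{eqn:YDAlg'} is what makes this match work). The index bounds of the first paragraph guarantee that $V_i$ (resp. $V_j$) carries the required half of the structure; the other choice of diagonal component is handled by the analogous computation.

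The strictly increasing triples $i<j<k$ carry the real content and are the step I expect to be the main obstacle. Here all three components are of type $\crot^{YD}$, and \eqref{eqn:cYB} is nothing but the Yang-Baxter equation for the alternative braiding $\crot^{YD}$ established in Theorem \ref{thm:YD_br_global} together with Remark \ref{rmk:YD_br_alter}. I would carry out (or cite) that computation: expanding both sides, the comparison reduces to rewriting $\delta_j(a_{(1)}\cdot b)$ by the YD compatibility \eqref{eqn:YD} of $V_j$, after using associativity of $\lambda_k$ and coassociativity of $\delta_i$. As stressed in Remark \ref{rmk:not_bialg}, this argument never invokes compatibility of $\mu_H$ with $\Delta_H$, so it is insensitive to $H$ being only a UAA-and-coUAA object; and it calls only on $\delta_i$ of the first object and $\lambda_k$ of the last, both present in a YD system. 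The genuinely delicate point is thus purely bookkeeping — checking that $V_1$ and $V_r$ never occupy a slot demanding the structure they miss — which the index argument already resolves.

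Finally, for invertibility when $H$ is a Hopf algebra, I would write down the central-symmetry image of the inverse \eqref{eqn:YD_br_inv}: for $i<j$ the map sending $y\otimes x\in V_j\otimes V_i$ to $x_{(0)}\otimes\bigl(s(x_{(1)})\cdot y\bigr)$ is a two-sided inverse of $\sigma_{i,j}$. A short check confirms it: composing with $\sigma_{i,j}$ and using associativity of $\lambda_j$, coassociativity and the counit of $\delta_i$, and the antipode relation \eqref{eqn:s}, one collapses the $H$-factor to $\varepsilon(a_{(1)})1_H$ and recovers the identity. This formula uses only $\delta_i$ and $\lambda_j$, hence is well defined even when $V_i=V_1$ or $V_j=V_r$. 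The diagonal components $\sigma_{i,i}$ are of course not claimed to be invertible, and indeed are not.
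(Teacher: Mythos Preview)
Your proof is correct and follows essentially the same path as the paper's. The only organizational difference is that the paper invokes Proposition~\ref{thm:BrSystemUAA} (a result from \cite{Lebed2} on braided systems of UAAs) to package your $i=j=k$, $i=j<k$, and $i<j=k$ cases into the single verification ``each $\crot^{YD}_{V_i,V_j}$ is natural with respect to the units and multiplications $\nu_i,\nu_j,\mu_i,\mu_j$''; the computations behind that proposition are precisely the ones you spell out directly. Your treatment of the strictly increasing triple (comodule property of $V_i$, module property of $V_k$, YD compatibility of $V_j$) and of the explicit inverse via the antipode coincide with the paper's.
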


In order to prove the theorem, the following result from \cite{Lebed2} will be useful:

\begin{proposition}\label{thm:BrSystemUAA}
Take $r$ UAAs $(V_i,\mu_i,\nu_i)_{1 \le i \le r}$ in a monoidal category $\CC$ and, for each couple of subscripts $1 \le \underline{i<j} \le r,$ take a morphism $\xi_{i,j}:V_i \otimes V_j \rightarrow V_j \otimes V_i$ natural with respect to $\nu_i$ and $\nu_j$ (in the sense of a multi-object version of  \eqref{eqn:BrAlg''}-\eqref{eqn:BrAlg'''}).
The following statements are then equivalent:
\begin{enumerate}
\item\label{item:braided} The morphisms 
\begin{align}\label{eqn:sigma_Ass}
\xi_{i,i} &:= \nu_i \otimes \mu_i  & \forall \: 1 \le i \le r
\end{align}
 complete the $\xi_{i,j}$'s into a braided system structure on $\oV.$ 
\item\label{item:mixed} Each $\xi_{i,j}$ is natural with respect to $\mu_i$ and $\mu_j$ (in the sense of a multi-object version of \eqref{eqn:BrAlg}-\eqref{eqn:BrAlg'})
and, for each triple $i<j<k,$ the $\xi_{i,j}$'s satisfy the colored Yang-Baxter equation on $V_i \otimes V_j \otimes V_k.$
\end{enumerate}
\end{proposition}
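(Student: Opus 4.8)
The plan is to dissect the family of colored Yang--Baxter equations \eqref{eqn:cYB} required by condition \ref{item:braided} according to the coincidence pattern of the three indices $i\le j\le k$, and to match each resulting sub-family with a piece of condition \ref{item:mixed}. The starting point is the elementary fact that for a single UAA $(V_i,\mu_i,\nu_i)$ the morphism $\xi_{i,i}:=\nu_i\otimes\mu_i$ of \eqref{eqn:sigma_Ass} already satisfies the Yang--Baxter equation \eqref{eqn:YB}: both composites are, up to the bracketing of a triple product, the morphism $V_i^{\otimes 3}\to V_i^{\otimes 3}$ that multiplies all three inputs into the last factor while filling the first two with $\nu_i$, so the two sides agree precisely by associativity of $\mu_i$. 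This disposes, for free, of every instance of \eqref{eqn:cYB} with $i=j=k$.

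It then remains to treat the genuinely mixed patterns. The instances with three distinct indices $i<j<k$ are literally the distinct-triple equations of condition \ref{item:mixed}, so they demand no argument and are shared by both descriptions. The heart of the matter is the claim that, given the naturality of the $\xi_{i,j}$ with respect to the units (the standing hypothesis, i.e. the multi-object form of \eqref{eqn:BrAlg''}--\eqref{eqn:BrAlg'''}), the two families of repeated-index equations are equivalent to the two multiplicativity conditions:
\begin{itemize}
\item the instance of \eqref{eqn:cYB} on $V_i\otimes V_i\otimes V_j$ (pattern $i=j<k$, inserting $\xi_{i,i}=\nu_i\otimes\mu_i$) is equivalent to compatibility of $\xi_{i,j}$ with $\mu_i$, the analog of \eqref{eqn:BrAlg};
\item the instance on $V_i\otimes V_j\otimes V_j$ (pattern $i<j=k$, inserting $\xi_{j,j}=\nu_j\otimes\mu_j$) is equivalent to compatibility of $\xi_{i,j}$ with $\mu_j$, the analog of \eqref{eqn:BrAlg'}.
\end{itemize}
Since a fixed pair $i<j$ draws its $\mu_i$-compatibility from the triple $(i,i,j)$ and its $\mu_j$-compatibility from the triple $(i,j,j)$, this matching is a bijection between the two lists of conditions, yielding \ref{item:braided}$\iff$\ref{item:mixed}.

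The key computation, and the step I expect to be the main obstacle, is verifying this last equivalence diagrammatically. Working on $V_i\otimes V_i\otimes V_j$, the mechanism is that feeding $\nu_i\otimes\mu_i$ into one crossing produces a unit strand $\nu_i$ that is then carried through a neighbouring crossing, which by the unit-naturality hypothesis acts trivially on it; the six-fold identity thereby collapses, both composites taking the form $(\Id_{V_j}\otimes\nu_i\otimes\Id_{V_i})\circ P$ for a single inserted $\nu_i$ in the middle factor. Reading off the two maps $P\colon V_i\otimes V_i\otimes V_j\to V_j\otimes V_i$, the left-hand side gives $P_{\mathrm{L}}=\xi_{i,j}\circ(\mu_i\otimes\Id_{V_j})$ (multiply the two $V_i$ inputs, then braid past $V_j$) and the right-hand side gives $P_{\mathrm{R}}=(\Id_{V_j}\otimes\mu_i)\circ(\xi_{i,j}\otimes\Id_{V_i})\circ(\Id_{V_i}\otimes\xi_{i,j})$ (braid each $V_i$ past $V_j$ separately, then multiply the outputs). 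One implication is immediate: from $P_{\mathrm{L}}=P_{\mathrm{R}}$ one recovers the cYBE by post-composing with $\Id_{V_j}\otimes\nu_i\otimes\Id_{V_i}$. For the converse the unit strand must be \emph{removed}, which is exactly where the UAA axiom $\mu_i\circ(\nu_i\otimes\Id_{V_i})=\Id_{V_i}$ is used: post-composing the collapsed cYBE with the retraction $\Id_{V_j}\otimes\mu_i$ cancels the inserted $\nu_i$ and returns $P_{\mathrm{L}}=P_{\mathrm{R}}$, i.e. $\xi_{i,j}\circ(\mu_i\otimes\Id_{V_j})=(\Id_{V_j}\otimes\mu_i)\circ(\xi_{i,j}\otimes\Id_{V_i})\circ(\Id_{V_i}\otimes\xi_{i,j})$ in notation \eqref{eqn:phi_i}. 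The pattern $i<j=k$ is handled symmetrically, producing the analog of \eqref{eqn:BrAlg'}; drawing the three diagrams for each pattern makes the unit slide, the collapse, and the retraction visible at once, which is what makes the bookkeeping the delicate part rather than any conceptual difficulty.
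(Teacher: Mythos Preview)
Your argument is correct and complete. Note, however, that the present paper does not actually supply a proof of this proposition: it is quoted as a result from \cite{Lebed2}, so there is no in-paper proof to compare your approach against. Your case analysis by coincidence pattern of the indices $i\le j\le k$, the observation that the equal-index YBE for $\xi_{i,i}=\nu_i\otimes\mu_i$ is associativity, the unit-naturality collapse of the mixed cYBE instances, and the retraction via $\mu_i\circ(\nu_i\otimes\Id_{V_i})=\Id_{V_i}$ to recover the multiplicativity condition from the collapsed equation, together constitute the natural and standard proof of such a statement.
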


We call braided systems described in the proposition \emph{braided systems of UAAs}. In \cite{Lebed2} this structure was shown to be equivalent to that of a \emph{multi-braided tensor product of UAAs}.

The \emph{associativity braidings} $\xi_{i,i}$ from \eqref{eqn:sigma_Ass} were introduced in \cite{Lebed1}, where they were shown to encode the associativity (in the sense that the YBE for $\xi_{i,i}$ is equivalent to $\mu_i$ being associative, if one imposes that $\nu_i$ is a unit for $\mu_i$) and to capture many structural properties of the latter. We denote them by $\sigma_{Ass}(V_i).$ Observe that they are highly \emph{non-invertible} in general.

\begin{remark}\label{rmk:pre_mirror}
Some or all of the morphisms $\xi_{i,i}$ in the proposition can be replaced with their \emph{right versions} $$\sigma_{Ass}^r(V_i):=\mu_i \otimes \nu_i.$$
\end{remark}

\begin{proof}[Proof of the theorem]
First notice that the compatibility conditions \eqref{eqn:YDAlg''}-\eqref{eqn:YDAlg'''} between the units $\nu_i$ of the $V_i$'s and the $H$-(co)module structures on the $V_i$'s (cf. two last pictures on Fig. \ref{pic:CompatYD}) ensure that $\crot^{YD}_{V_i,V_j}$ is natural with respect to units. In order to deduce the theorem from Proposition \ref{thm:BrSystemUAA}, it remains to check the following two conditions:

\begin{enumerate}
\item The naturality of $\crot^{YD}_{V_i,V_j}$ with respect to $\mu_i$ and $\mu_j$ for each couple $i<j.$

Fig. \ref{pic:YDNatMuiDetails} contains a graphical proof of the naturality  with respect to $\mu_i,$ the case of $\mu_j$ being similar. Labels $V_i, V_j, H, \mu_i$ etc. are omitted for compactness.

\begin{center}
\begin{tikzpicture}[scale=0.4]
 \draw[colorM, thick,rounded corners] (0,0) -- (4,4);
 \draw[colorM, thick] (2,0) -- (1,1);
 \draw[brown!70!black, thick,rounded corners] (4,0) -- (4,1) -- (2,3) -- (2,4);
 \draw (1.5,1.5) -- (3,2);
 \fill[teal] (1,1) circle (0.2);
 \fill[teal] (1.5,1.5) circle (0.2);
 \fill[teal] (3,2) circle (0.2);
 \node  at (5,2){$\stackrel{\circled{1}}{=}$};
\end{tikzpicture}
\begin{tikzpicture}[scale=0.4]
 \draw[colorM, thick] (0,0) -- (4,4);
 \draw[colorM, thick] (2,0) -- (1.5,1.5);
 \draw[brown!70!black, thick,rounded corners] (4,0) -- (4,1) -- (2,3) -- (2,4);
 \draw[rounded corners] (11/6,0.5) -- (2.5,1.5) -- (3,2);
 \draw (0.5,0.5) -- (2.5,1.5);
 \fill[teal] (0.5,0.5) circle (0.2);
 \fill[teal] (1.5,1.5) circle (0.2);
 \fill[teal] (11/6,0.5) circle (0.2);
 \fill[teal] (2.5,1.5) circle (0.15);
 \fill[teal] (3,2) circle (0.2);
 \node  at (5,2){$\stackrel{\circled{2}}{=}$};
\end{tikzpicture}
\begin{tikzpicture}[scale=0.4]
 \draw[colorM, thick,rounded corners] (0,0) -- (4,4);
 \draw[colorM, thick] (2,0) -- (1.5,1.5);
 \draw[brown!70!black, thick,rounded corners] (4,0) -- (4,1) -- (2,3) -- (2,4);
 \draw[rounded corners] (11/6,0.5) -- (3.5,1.5);
 \draw (0.5,0.5) -- (3,2);
 \fill[teal] (0.5,0.5) circle (0.2);
 \fill[teal] (1.5,1.5) circle (0.2);
 \fill[teal] (11/6,0.5) circle (0.2);
 \fill[teal] (3.5,1.5) circle (0.2);
 \fill[teal] (3,2) circle (0.2);
 \node  at (5,2){$\stackrel{\circled{3}}{=}$};
\end{tikzpicture}
\begin{tikzpicture}[scale=0.4]
 \draw[colorM, thick,rounded corners] (0,0) -- (0,0.5) -- (3.5,4);
 \draw[colorM, thick,rounded corners] (2,0) -- (2,0.5) -- (4,3) -- (3,3.5);
 \draw[brown!70!black, thick,rounded corners] (4,0) -- (4,0.5) -- (0.5,4);
 \draw (3.6,0.9) -- (2,0.5);
 \draw (3.15,1.3) -- (0,0.5);
 \fill[teal] (3,3.5) circle (0.2);
 \fill[teal] (2,0.5) circle (0.2);
 \fill[teal] (0,0.5) circle (0.2);
 \fill[teal] (3.15,1.3) circle (0.2);
 \fill[teal] (3.6,0.9) circle (0.2);
 \node  at (5,2){$\stackrel{\circled{4}}{=}$};
\end{tikzpicture}
\begin{tikzpicture}[scale=0.4]
 \draw[colorM, thick,rounded corners] (0,0) -- (0,0.5) -- (3.5,4);
 \draw[colorM, thick,rounded corners] (2,0) -- (2,0.5) -- (4,3) -- (3,3.5);
 \draw[brown!70!black, thick,rounded corners] (4,0) -- (4,0.5) -- (0.5,4);
 \draw (3.5,1) -- (2,0.5);
 \draw (2.5,2) -- (1,1.5);
 \fill[teal] (3,3.5) circle (0.2);
 \fill[teal] (2,0.5) circle (0.2);
 \fill[teal] (1,1.5) circle (0.2);
 \fill[teal] (2.5,2) circle (0.2);
 \fill[teal] (3.5,1) circle (0.2);
 \node at (5,0.5) {.};
\end{tikzpicture}
   
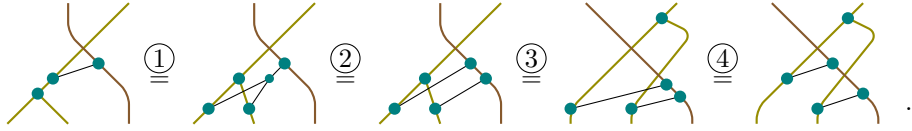
\captionof{figure}{Naturality with respect to $\mu_i$: a graphical proof}\label{pic:YDNatMuiDetails}
\end{center}

Here we use:
\begin{enumerate}[label=\protect\circled{\arabic*}]
\item the compatibility \eqref{eqn:YDAlg} between $\delta_i$ and $\mu_i$ (cf. Fig. \ref{pic:CompatYD}, picture 1);
\item the definition of $H$-module for $V_j$;
\item the naturality of the symmetric braiding $c$ of $\CC$;
\item the naturality and the symmetry \eqref{eqn:BrSymm} of $c$. 
\end{enumerate}

\item Condition \eqref{eqn:cYB} on $V_i \otimes V_j \otimes V_k$ for each triple $i<j<k.$

Due to the naturality of $c,$ this condition is equivalent to the one graphically presented on Fig. \ref{pic:YD_YB}.

\begin{center}
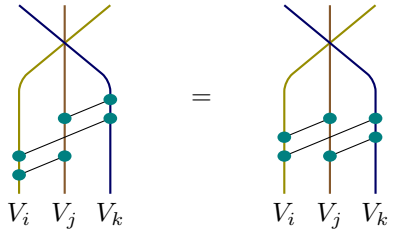

\begin{tikzpicture}[xscale=0.6,yscale=0.5]
 \draw[olive, thick,rounded corners] (0,0) -- (0,3) -- (2,5);
 \draw[brown!70!black, thick,rounded corners] (1,0) -- (1,5);
 \draw[blue!40!black, thick,rounded corners] (2,0) -- (2,3) -- (0,5);
 \draw (0,0.5) -- (1,1);
 \draw (0,1) -- (2,2);
 \draw (1,2) -- (2,2.5);
 \node at (2,0) [below] {$V_k$};
 \node at (1,0) [below] {$V_j$};
 \node at (0,0) [below] {$V_i$};
 \fill[teal] (1,2) circle (0.15);
 \fill[teal] (2,2.5) circle (0.15);
 \fill[teal] (2,2) circle (0.15);
 \fill[teal] (0,1) circle (0.15);
 \fill[teal] (1,1) circle (0.15);
 \fill[teal] (0,0.5) circle (0.15);
 \node  at (4,2.5){$=$};
\end{tikzpicture}
\begin{tikzpicture}[xscale=0.6,yscale=0.5]
 \node  at (-1,1.5){};
 \draw[olive, thick,rounded corners] (0,0) -- (0,3) -- (2,5);
 \draw[brown!70!black, thick,rounded corners] (1,0) -- (1,5);
 \draw[blue!40!black, thick,rounded corners] (2,0) -- (2,3) -- (0,5);
 \draw (0,1.5) -- (1,2);
 \draw (0,1) -- (2,2);
 \draw (1,1) -- (2,1.5);
 \node at (2,0) [below] {$V_k$};
 \node at (1,0) [below] {$V_j$};
 \node at (0,0) [below] {$V_i$};
 \fill[teal] (1,2) circle (0.15);
 \fill[teal] (2,1.5) circle (0.15);
 \fill[teal] (2,2) circle (0.15);
 \fill[teal] (0,1) circle (0.15);
 \fill[teal] (1,1) circle (0.15);
 \fill[teal] (0,1.5) circle (0.15);
 \node at (3,0) {.};
\end{tikzpicture}
   \captionof{figure}{Yang-Baxter equation for Yetter-Drinfel$'$d modules}\label{pic:YD_YB}
\end{center}
To prove this, one needs 
\begin{itemize}
\item the defining property of right $H$-comodule for $V_i,$
\item the defining property of left $H$-module for $V_k,$
\item the Yetter-Drinfel$'$d property for $V_j.$ 
\end{itemize}
\end{enumerate}

\medskip
As for the invertibility statement, if $H$ is a Hopf algebra, then the inverse of $\sigma_{i,j} = \crot^{YD}_{V_i,V_j},$ $i < j,$ can be explicitely given by the formula
\begin{align*}
 \sigma_{i,j}^{-1} &=  (\Id_{V_i} \otimes \lambda_j) \circ (\Id_{V_i} \otimes s \otimes \Id_{V_j}) \circ (\delta_i \otimes \Id_{V_j}) \circ c_{V_j,V_i} : V_j \otimes V_i \rightarrow V_i \otimes V_j. \qedhere
\end{align*}

\end{proof}

\begin{remark}\label{rmk:YDsystem_implicit_bialg}
No compatibility between the algebra and coalgebra structures on $H$ are demanded explicitly. However, other properties of a YD system dictate that $H$ should be not too far from a bialgebra, at least as far as (co)actions are concerned. This statement is made concrete and is proved (in the $H$-comodule case) on Fig. \ref{pic:AlmostBialg}; cf. Fig. \ref{pic:Bialg} for the definition of bialgebra.
\begin{center}
\begin{tikzpicture}[scale=0.4]
 \draw[colorM, thick] (0,0) -- (0,4);
 \draw[colorM, thick] (2,0) -- (0,2);
 \draw (0,1) -- (1,2) -- (1,4);
 \draw (1,1) -- (1,3) -- (2,4); 
 \node at (0,0) [below] {$V_i$};
 \node at (2,0) [below] {$V_i$};
 \node at (0,4) [above] {$V_i$}; 
 \node at (1,4) [above] {$H$};
 \node at (2,4) [above] {$H$};
 \fill[teal] (0,1) circle (0.2);
 \fill[teal] (0,2) circle (0.2);
 \fill[teal] (1,1) circle (0.2);
 \fill[teal] (1,2) circle (0.15);
 \fill[teal] (1,3) circle (0.15);
\node  at (3,2){$=$};
\end{tikzpicture}
\begin{tikzpicture}[scale=0.4]
 \draw[colorM, thick] (0,0) -- (0,4);
 \draw[colorM, thick] (1,0) -- (0,1);
 \draw (1,3) -- (1,4);
 \draw (0,2) -- (2,4); 
 \node at (0,0) [below] {$V_i$};
 \node at (1,0) [below] {$V_i$};
 \node at (0,4) [above] {$V_i$}; 
 \node at (1,4) [above] {$H$};
 \node at (2,4) [above] {$H$};
 \fill[teal] (0,1) circle (0.2);
 \fill[teal] (0,2) circle (0.2);
 \fill[teal] (1,3) circle (0.15);
\node  at (3,2){$=$};
\end{tikzpicture}
\begin{tikzpicture}[scale=0.4]
 \draw[colorM, thick] (0,0) -- (0,4);
 \draw[colorM, thick] (2,0) -- (0,1);
 \draw (0,2) -- (2,4);
 \draw (0,3) -- (1,4); 
 \node at (0,0) [below] {$V_i$};
 \node at (2,0) [below] {$V_i$};
 \node at (0,4) [above] {$V_i$}; 
 \node at (1,4) [above] {$H$};
 \node at (2,4) [above] {$H$};
 \fill[teal] (0,1) circle (0.2);
 \fill[teal] (0,2) circle (0.2);
 \fill[teal] (0,3) circle (0.2);
\node  at (3,2){$=$};
\end{tikzpicture}
\begin{tikzpicture}[scale=0.4]
 \draw[colorM, thick] (0,0) -- (0,4);
 \draw[colorM, thick] (2,0) -- (0,3);
 \draw (0,1.5) -- (1,3) -- (1,4) -- (1,1.5);
 \draw (1.5,0.75) -- (1.5,4); 
 \draw (0,0.75) -- (1.5,3); 
 \node at (0,0) [below] {$V_i$};
 \node at (2,0) [below] {$V_i$};
 \node at (0,4) [above] {$V_i$}; 
 \node at (1,4) [above] {$H$};
 \node at (2,4) [above] {$H$};
 \fill[teal] (0,1.5) circle (0.17);
 \fill[teal] (0,0.75) circle (0.17);
 \fill[teal] (1,1.5) circle (0.17);
 \fill[teal] (1,3) circle (0.12);
 \fill[teal] (0,3) circle (0.17);
 \fill[teal] (1.5,0.75) circle (0.17);
 \fill[teal] (1.5,3) circle (0.12);
 \node  at (3,2){$=$};
\end{tikzpicture}
\begin{tikzpicture}[scale=0.4]
 \draw[colorM, thick] (0,0) -- (0,4);
 \draw[colorM, thick] (2,0) -- (0,2);
 \draw (0,1) -- (2,3) -- (2,2) -- (1,3);
 \draw (1,1) -- (2,2) -- (2,4); 
 \draw (1,2) -- (1,4);
 \node at (0,0) [below] {$V_i$};
 \node at (2,0) [below] {$V_i$};
 \node at (0,4) [above] {$V_i$}; 
 \node at (1,4) [above] {$H$};
 \node at (2,4) [above] {$H$};
 \node at (3,0) [below] {.};
 \fill[teal] (0,1) circle (0.2);
 \fill[teal] (0,2) circle (0.2);
 \fill[teal] (1,1) circle (0.2);
 \fill[teal] (1,2) circle (0.15);
 \fill[teal] (1,3) circle (0.15);
 \fill[teal] (2,2) circle (0.15);
 \fill[teal] (2,3) circle (0.15);
\end{tikzpicture}
   
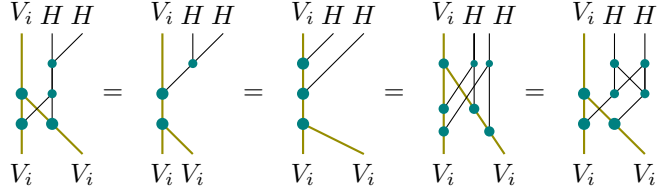
\captionof{figure}{Almost a bialgebra}\label{pic:AlmostBialg}
\end{center}
\end{remark}

\subsection{Yetter-Drinfel$'$d systems: examples}\label{sec:YD_systems_ex}

The examples of $H$-YD systems we give below contain usual YD modules over $H,$ a priori without a UAA structure. In the following lemma we explain how to overcome this lack of structure by introducing a \emph{formal unit} into a YD module:

\begin{lemma}\label{thm:YD_mod_UAA}
Let $(M,\lambda,\delta)$ be a YD module over a UAA and coUAA $H$ in a symmetric \underline{additive} category $\CC.$ This YD module structure can be extended to $\widetilde{M} := M \oplus \II$ as follows:
\begin{align*}
\widetilde{\lambda}|_{H \otimes \II} &:= \varepsilon_H, &\widetilde{\lambda}|_{H \otimes M} &:= \lambda,\\
\widetilde{\delta}|_{\II} &:= \nu_H, &\widetilde{\delta}|_{M} &:= \delta.
\end{align*}
Moreover, combined with the trivial UAA structure on $\widetilde{M}$: 
$$\mu|_{M \otimes M} = 0, \qquad\qquad \mu|_{\II \otimes \widetilde{M}} = \mu|_{\widetilde{M} \otimes \II} = \Id_{\widetilde{M}}, \qquad\qquad  \nu = \Id_{\II},$$
this extended YD module structure turns $M$ into an $H$-YD module algebra.
\end{lemma}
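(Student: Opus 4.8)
The plan is to exploit the biproduct $\widetilde{M}=M\oplus\II$ together with the fact that both $\widetilde{\lambda}$ and $\widetilde{\delta}$ are \emph{block-diagonal} for this decomposition: by construction $\widetilde{\lambda}$ carries $H\otimes M$ into $M$ and $H\otimes\II$ into $\II$, and dually $\widetilde{\delta}$ carries $M$ into $M\otimes H$ and $\II$ into $\II\otimes H$. Since the category is additive and $\otimes$ is additive in each variable, every axiom in play (the module axioms \eqref{eqn:Amod}--\eqref{eqn:Amod'} and the comodule axioms, the Yetter--Drinfel'd condition \eqref{eqn:YD}, associativity and unitality, and the YD-algebra compatibilities \eqref{eqn:YDAlg}--\eqref{eqn:YDAlg'''}) is additive in the structure morphisms and therefore splits into independent equations on the summands of the relevant tensor powers. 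The whole verification thus reduces to checking each axiom block by block, which is where all the content sits.

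First I would establish that $(\widetilde{M},\widetilde{\lambda},\widetilde{\delta})$ is a YD module. On the $M$-block every axiom holds by hypothesis, the structure maps restricting there to $\lambda$ and $\delta$. On the $\II$-block the maps are $\widetilde{\lambda}|_{H\otimes\II}=\varepsilon_H$ and $\widetilde{\delta}|_{\II}=\nu_H$, i.e. the trivial unit YD structure of \eqref{eqn:unit_mod}--\eqref{eqn:unit_comod}; there the module and comodule axioms amount to the (co)unit compatibilities of $H$, while \eqref{eqn:YD} collapses to an identity coming purely from the counit of $\Delta_H$ and the unit of $\mu_H$. (This is exactly the statement that $\II$ is the unit object of ${_H}\!\YD^H$, the relevant compatibilities being available in the bialgebra setting in which the lemma is applied.) The two blocks together give the YD module structure on $\widetilde{M}$.

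Next I would treat the algebra structure. That $(\widetilde{M},\mu,\nu)$ is a UAA is the standard square-zero unital extension: $\nu=\Id_\II$ exhibits $\II$ as a two-sided unit via $\mu|_{\II\otimes\widetilde{M}}=\mu|_{\widetilde{M}\otimes\II}=\Id$, and associativity follows by decomposing $\widetilde{M}^{\otimes 3}$, since any summand with two factors in $M$ is annihilated by $\mu|_{M\otimes M}=0$ while the remaining summands reduce to the unit law. The compatibilities \eqref{eqn:YDAlg''}--\eqref{eqn:YDAlg'''} for $\nu$ then hold on the nose, being literally the defining restrictions $\widetilde{\delta}|_{\II}=\nu_H$ and $\widetilde{\lambda}|_{H\otimes\II}=\varepsilon_H$. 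It remains to verify \eqref{eqn:YDAlg} and \eqref{eqn:YDAlg'}, the assertion that $\mu$ is a YD-module morphism where $\widetilde{M}\otimes\widetilde{M}$ carries the twisted structure \eqref{eqn:tensor_mod_alter}--\eqref{eqn:tensor_comod_alter}. I would split $\widetilde{M}\otimes\widetilde{M}$ into its four blocks: on $M\otimes M$ both sides vanish (the right-hand sides terminate in $\mu|_{M\otimes M}=0$), and on $\II\otimes\II$ both sides reduce to the unit YD identity through the unit of $\mu_H$ and the counit of $\Delta_H$.

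I expect the mixed blocks $M\otimes\II$ and $\II\otimes M$ in \eqref{eqn:YDAlg}--\eqref{eqn:YDAlg'} to be the main obstacle. There $\mu$ is the inclusion of $M$, so the two identities must say that the full twisted tensor (co)action --- which involves $\Delta_H^{\op}$, $\mu_H$ and the braiding $c_{H,V}$ --- collapses to the plain $\delta$ (respectively $\lambda$) on the surviving $M$-factor. The key is that the twist is invisible on these blocks: the $\II$-factor contributes only $\nu_H$ or $\varepsilon_H$, the braiding applied to $\nu_H\otimes\Id$ simplifies by naturality of $c$ at $\nu_H$ (and at $\varepsilon_H$), and the auxiliary $\Delta_H^{\op}$ or $\mu_H$ is absorbed by the counit axiom $(\Id_H\otimes\varepsilon_H)\circ\Delta_H=\Id_H$ (and its mirror) together with the unit axiom for $\mu_H$. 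Rendering these steps diagrammatically makes the collapses transparent, and I note that the main bialgebra axiom \eqref{eqn:cat_bialg} is never invoked, only the (co)unit compatibilities and the naturality of $c$.
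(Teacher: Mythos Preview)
Your proposal is correct and follows exactly the approach the paper intends: the paper's own proof consists of the single phrase ``Direct verifications,'' and what you have written is precisely that verification carried out block by block on the biproduct decomposition $\widetilde{M}=M\oplus\II$. Your identification of the mixed blocks in \eqref{eqn:YDAlg}--\eqref{eqn:YDAlg'} as the only non-trivial part, and your observation that they collapse via the (co)unit axioms and the naturality of $c$ (without ever invoking \eqref{eqn:cat_bialg}), are accurate; you also rightly flag that the module and comodule axioms on the $\II$-block implicitly use the compatibilities $\varepsilon_H\circ\mu_H=\varepsilon_H\otimes\varepsilon_H$, $\varepsilon_H\circ\nu_H=\Id_\II$, $\Delta_H\circ\nu_H=\nu_H\otimes\nu_H$, which the lemma's hypotheses do not state explicitly but which hold in every application made in the paper.
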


\begin{proof} Direct verifications. \end{proof} 

We need $\CC$ to be additive in order to ensure the existence of direct sums and zero morphisms. Our favorite category $\Vect$ is additive.

In what follows we mostly work with YD modules admitting moreover a compatible UAA structure; the lemma shows that this assumption does not reduce the generality of our constructions.

\medskip
Everything is now ready for our main example of $H$-YD systems. Recall Sweedler's notation \eqref{eqn:Sweedler}-\eqref{eqn:Sweedler'}.

\begin{proposition}\label{thm:YDsystem_ex}
Let $H$ be a finite-dimensional $\k$-linear bialgebra, 
 and let $M_1, \ldots, M_r$ be Yetter-Drinfel$'$d module algebras over $H.$ 
Consider $H$ itself as a UAA via morphisms $\mu_H$ and $\nu_H$ and as an $H$-comodule via $\Delta_H.$ Further, consider its dual space $H^*$ as a UAA via dual morphisms $(\Delta_H)^*$ and $(\varepsilon_H)^*$ and as an $H$-module via 
\begin{align}\label{eqn:H_acts_on_H*} 
h\cdot l &:= h(l_{(1)})l_{(2)} & \forall\: h \in H, l \in H^*
\end{align}
(cf. Fig. \ref{pic:HActsOnH*}).
 These structures turn the family $(H,M_1, \ldots, M_r,H^*)$ into an $H$-YD system.
\end{proposition}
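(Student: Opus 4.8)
The plan is to verify directly the three structural requirements of Definition \ref{def:YDsystem}, the braiding itself being then furnished automatically by Theorem \ref{thm:YD_braided}. Writing the family as $V_1, \ldots, V_{r+2}$ with $V_1 = H$, $V_{i+1} = M_i$ for $1 \le i \le r$, and $V_{r+2} = H^*$, the middle components $V_2, \ldots, V_{r+1}$ are YD module algebras over $H$ by hypothesis, so nothing is to be checked there. Thus everything reduces to the two endpoints: that $(H, \mu_H, \nu_H, \Delta_H)$ is an $H$-comodule algebra, and that $H^*$, equipped with the dual UAA structure $((\Delta_H)^*, (\varepsilon_H)^*)$ and the action \eqref{eqn:H_acts_on_H*}, is an $H$-module algebra.

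First I would treat $V_1 = H$. Here the comodule structure is the regular coaction $\delta_1 = \Delta_H$, and the defining relations of an $H$-comodule algebra are obtained from Definition \ref{def:YDAlg} by discarding the action, namely \eqref{eqn:YDAlg} and \eqref{eqn:YDAlg''}. With $\mu = \mu_H$, $\delta = \Delta_H$ and $\nu = \nu_H$, relation \eqref{eqn:YDAlg} is verbatim the main bialgebra axiom \eqref{eqn:cat_bialg}, while \eqref{eqn:YDAlg''} is verbatim the axiom $\Delta_H \circ \nu_H = \nu_H \otimes \nu_H$. Both hold since $H$ is a bialgebra, so this endpoint is immediate.

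The substance of the proof lies in the last component $V_{r+2} = H^*$, which I would handle in three steps. (a) Since $H$ is finite-dimensional, $(H \otimes H)^* \cong H^* \otimes H^*$, and the coassociativity and counit of $(H, \Delta_H, \varepsilon_H)$ dualize to the associativity and unit of $((\Delta_H)^*, (\varepsilon_H)^*)$; thus $H^*$ is a UAA. (b) One checks that \eqref{eqn:H_acts_on_H*} satisfies the module axioms \eqref{eqn:Amod}, \eqref{eqn:Amod'}, i.e. that it is a genuine left $H$-module structure; here the associativity of $\mu_H$ and coassociativity of $\Delta_H$, read through the pairing, are what is used, and the twist built into the rainbow duality \eqref{eqn:Delta_dual}--\eqref{eqn:eps_dual} is exactly what reconciles the order of multiplication. (c) Finally I would verify the two module-algebra compatibilities \eqref{eqn:YDAlg'} and \eqref{eqn:YDAlg'''}. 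The unit relation \eqref{eqn:YDAlg'''} computes $h \cdot \varepsilon_H$ and reduces to $\varepsilon_H$ being an algebra map, i.e. to a further bialgebra axiom; it is short. The relation \eqref{eqn:YDAlg'}, linking the action on $H^*$, the product $(\Delta_H)^*$ on $H^*$, and the twisted comultiplication $\Delta_H^{op}$ of $H$, is the heart of the matter: pairing both sides against an arbitrary element of $H$ and expanding in Sweedler's notation, it becomes a consequence of the bialgebra compatibility \eqref{eqn:cat_bialg} of $H$.

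I expect step (c), and within it relation \eqref{eqn:YDAlg'}, to be the main obstacle --- not conceptually, but because of the bookkeeping of the various $\mu^{op}$/$\Delta^{op}$ twists and of the ``reversed'' pairing conventions \eqref{eqn:Delta_dual}--\eqref{eqn:eps_dual}; every occurrence must be tracked so that the two sides coincide on the nose rather than up to an opposite structure. Once the two endpoints are established, Definition \ref{def:YDsystem} is satisfied and the family is an $H$-YD system; Theorem \ref{thm:YD_braided} then equips it with the asserted braiding. As a sanity check, specialising to $r = 1$ (adjoining a formal unit to a bare YD module $M$ via Lemma \ref{thm:YD_mod_UAA}) and reading off the components of $\crot^{YD}$ recovers exactly the explicit formulas of Theorem \ref{thm:YDsystem_for_YDmod}, delivering the conceptual proof promised there.
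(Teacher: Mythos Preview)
Your proposal is correct and follows essentially the same approach as the paper's own proof: the paper likewise observes that the middle components are handled by hypothesis and reduces everything to checking that $\Delta_H$ makes $H$ an $H$-comodule algebra, that \eqref{eqn:H_acts_on_H*} is a genuine $H$-action, and that this action is compatible with the UAA structure $((\Delta_H)^*,(\varepsilon_H)^*)$ on $H^*$. Your write-up is more explicit about which bialgebra axioms drive each step (and adds a sanity check via the $r=1$ specialisation), but the decomposition and the ingredients are the same.
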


\begin{center}
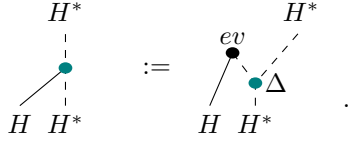

\begin{tikzpicture}[xscale=0.6,yscale=0.5]
 \node at (-2,1) {};
 \draw [dashed] (1,0) -- (1,2);
 \draw (0,0) -- (1,1);
 \fill [teal] (1,1) circle (0.15);
 \node at (1,2) [above] {$H^*$};
 \node at (1,0) [below] {$H^*$};
 \node at (0,0) [below] {$H$};
 \node at (3,1) {$:=$};
\end{tikzpicture}
\begin{tikzpicture}[xscale=0.6,yscale=0.5]
 \draw [dashed] (1,0)  -- (1,0.6) -- (2,2);
 \draw [dashed] (0.5,1.4) -- (1,0.6);
 \draw (0,0) -- (0.5,1.4);
 \node at (0.5,1.4) [above] {$ev$};
 \fill (0.5,1.4) circle (0.15);
 \node at (1,0.6) [right] {$\Delta$};
 \fill [teal] (1,0.6) circle (0.15);
 \node at (2,2) [above] {$H^*$};
 \node at (1,0) [below] {$H^*$};
 \node at (0,0) [below] {$H$};
 \node at (3,0) {.};
\end{tikzpicture}
   \captionof{figure}{The action of $H$ on $H^*$}\label{pic:HActsOnH*}
\end{center}

\begin{proof}
Almost all axioms from Definition \ref{def:YDsystem} are automatic. One should check only the following points:
\begin{itemize}
\item the $H$-comodule structure $\Delta_H$ on $H$ is compatible with the UAA structure $(\mu_H,\nu_H)$;
\item the map \eqref{eqn:H_acts_on_H*} indeed defines an $H$-action on $H^*$;
\item the $H$-module structure \eqref{eqn:H_acts_on_H*} on $H^*$ is compatible with the UAA structure $((\Delta_H)^*,(\varepsilon_H)^*)$.
\end{itemize}
These straightforward verifications use bialgebra axioms and the definition of dual morphisms. They are easily effectuated using the graphical calculus.
\end{proof}

Applying Theorem \ref{thm:YD_braided} to the $H$-YD system from Proposition \ref{thm:YDsystem_ex}, and choosing braiding component $\sigma_{Ass}$ for $H^*$ and the $M_i$'s, and its right version $\sigma_{Ass}^r$ for $H$ (cf. Remark \ref{rmk:pre_mirror}), one obtains the following braided system:

\begin{corollary}\label{thm:YDsystem_ex_crl}
In the settings of Proposition \ref{thm:YDsystem_ex}, the system $(H,M_1, \ldots, M_r,H^*)$ can be endowed with the following braiding:
\begin{align*}
\sigma_{H,H} &:= \sigma_{Ass}^r(H) = \mu_H \otimes \nu_H: & h_1 \otimes h_2 &\mapsto h_1 h_2 \otimes \one_H,\\
\sigma_{H^*,H^*} &:= \sigma_{Ass}(H^*) = (\varepsilon_H)^* \otimes (\Delta_H)^*: & l_1 \otimes l_2 &\mapsto \varepsilon_H \otimes l_1 l_2,\\
\sigma_{H,H^*} &:= \crot^{YD}_{H,H^*}: & h \otimes l  &\mapsto l_{(1)}(h_{(2)}) \: l_{(2)} \otimes h_{(1)},\\ 
\sigma_{M_i,M_i} &:= \sigma_{Ass}(M_i) = \nu_i \otimes \mu_i: & m_1 \otimes m_2 &\mapsto \one_{M_i} \otimes m_1 m_2,\\
\sigma_{M_i,M_j} &:=\crot^{YD}_{M_i,M_j}: & m \otimes n  &\mapsto  m_{(1)} n \otimes m_{(0)},\\ 
\sigma_{H,M_i} &:= \crot^{YD}_{H,M_i}: & h \otimes m  &\mapsto h_{(2)} m \otimes h_{(1)},\\ 
\sigma_{M_i,H^*} &:= \crot^{YD}_{M_i,H^*}: & m \otimes l  &\mapsto l_{(1)}(m_{(1)}) \: l_{(2)} \otimes m_{(0)}.
\end{align*}
Notations analogous to those from Theorem \ref{thm:YDsystem_for_YDmod}  are used here.
\end{corollary}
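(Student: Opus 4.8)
The plan is to derive this corollary essentially for free by chaining the two immediately preceding results and then translating the abstract categorical morphisms into the explicit Sweedler-notation formulas displayed in the statement. First I would invoke Proposition \ref{thm:YDsystem_ex}, which already certifies that $(H,M_1,\ldots,M_r,H^*)$ is a bona fide $H$-YD system: $H$ occupies the first slot as an $H$-comodule algebra (via $\Delta_H$), each $M_i$ a middle slot as a YD module algebra, and $H^*$ the last slot as an $H$-module algebra (via the action \eqref{eqn:H_acts_on_H*}). Theorem \ref{thm:YD_braided} then applies verbatim, endowing the system with a braiding whose diagonal components are associativity braidings $\sigma_{Ass}(V_i)=\nu_i\otimes\mu_i$ and whose strictly upper components are $\crot^{YD}_{V_i,V_j}$. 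Exercising the freedom of Remark \ref{rmk:pre_mirror}, I would pick the right associativity braiding $\sigma_{Ass}^r(H)=\mu_H\otimes\nu_H$ for the first slot $H$, while keeping the left version $\sigma_{Ass}$ for the $M_i$'s and for $H^*$.

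What remains is a purely computational unfolding on elements. For the diagonal entries this is immediate: $\sigma_{Ass}^r(H)=\mu_H\otimes\nu_H$ sends $h_1\otimes h_2\mapsto h_1h_2\otimes\one_H$, and $\sigma_{Ass}(M_i)=\nu_i\otimes\mu_i$ sends $m_1\otimes m_2\mapsto\one_{M_i}\otimes m_1m_2$. For $H^*$ I would additionally record that its unit is $\one_{H^*}=(\varepsilon_H)^*(1)=\varepsilon_H$ by \eqref{eqn:eps_dual} and that its product is $(\Delta_H)^*$, so that $\sigma_{Ass}(H^*)$ reads $l_1\otimes l_2\mapsto\varepsilon_H\otimes l_1l_2$, the product $l_1l_2$ being the functional $h\mapsto l_1(h_{(2)})l_2(h_{(1)})$ of \eqref{eqn:Delta_dual}.

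For the off-diagonal entries I would expand the defining formula \eqref{eqn:YD_br_alter}, namely $\crot^{YD}_{V_i,V_j}=c_{V_i,V_j}\circ(\Id_{V_i}\otimes\lambda_j)\circ(\delta_i\otimes\Id_{V_j})$, reading $c$ as the flip \eqref{eqn:flip}. With $\delta_H=\Delta_H$ this yields $\sigma_{H,M_i}:h\otimes m\mapsto h_{(2)}m\otimes h_{(1)}$ and $\sigma_{H,H^*}:h\otimes l\mapsto (h_{(2)}\cdot l)\otimes h_{(1)}$, while $\delta_{M_i}(m)=m_{(0)}\otimes m_{(1)}$ yields $\sigma_{M_i,M_j}:m\otimes n\mapsto m_{(1)}n\otimes m_{(0)}$ and $\sigma_{M_i,H^*}:m\otimes l\mapsto (m_{(1)}\cdot l)\otimes m_{(0)}$. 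To match the displayed formulas, the single point deserving care is the $H^*$-action: by \eqref{eqn:H_acts_on_H*} one has $h\cdot l=h(l_{(1)})\,l_{(2)}$, and under the canonical identification $H\cong H^{**}$ (legitimate as $H$ is finite-dimensional) the scalar $h(l_{(1)})$ equals $l_{(1)}(h)$. Substituting $h=h_{(2)}$ and $h=m_{(1)}$ respectively converts the two expressions above into $l_{(1)}(h_{(2)})\,l_{(2)}\otimes h_{(1)}$ and $l_{(1)}(m_{(1)})\,l_{(2)}\otimes m_{(0)}$, exactly as claimed.

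Since all braided-system axioms are already supplied by Theorem \ref{thm:YD_braided}, there is no genuine obstacle here; the only thing that can realistically go wrong is a bookkeeping slip in the Sweedler indices of the dual product \eqref{eqn:Delta_dual} or in the orientation of the evaluation pairing. I would therefore cross-check these conventions against the graphical depictions on Figs \ref{pic:DualViaRainbow} and \ref{pic:HActsOnH*} before declaring the computation finished.
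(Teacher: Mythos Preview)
Your proposal is correct and follows exactly the same approach as the paper: invoke Proposition \ref{thm:YDsystem_ex} to get the $H$-YD system, apply Theorem \ref{thm:YD_braided}, and use Remark \ref{rmk:pre_mirror} to select $\sigma_{Ass}^r$ on $H$ while keeping $\sigma_{Ass}$ on the $M_i$'s and $H^*$. The paper in fact compresses all of this into a single sentence preceding the corollary, so your explicit Sweedler-notation unfolding of the $\crot^{YD}$ components and the duality bookkeeping for the $H^*$-action is more detailed than what the paper provides, but entirely in the same spirit.
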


See Figs \ref{pic:sigmaHMH*} and \ref{pic:YDBr_alter} for a graphical presentation of the components of the braiding from the corollary. Notation $\osigma_{YDAlg}$ will further be used for this braiding.

The simplest particular cases of the corollary already give interesting examples:

\begin{example}\label{ex:bialg_as_YD_system}
In the extreme case $r=0$ one gets the braiding on the system $(H,H^*)$ studied in \cite{Lebed2}, where it was shown that:
\begin{itemize}
\item it \emph{encodes the bialgebra structure} (e.g. the cYBE on $H \otimes H \otimes H^*$ or on $H \otimes H^* \otimes H^*$ is equivalent to the main bialgebra axiom \eqref{eqn:cat_bialg});
\item it allows to construct a fully faithful functor
$$^*\!\bialg \longhookrightarrow ^*\!\BrSyst^{\bullet\bullet}_2,$$
where $^*\!\bialg$ is the category of finite-dimensional bialgebras and bialgebra \textit{iso}morphisms (hence notation $^*$) in $\Vect,$ and $^*\!\BrSyst^{\bullet\bullet}_2$ is the category of rank $2$ braided systems and their \textit{iso}morphisms in $\Vect,$ endowed with some additional structure (hence notation $^{\bullet\bullet}$);
\item the invertibility of the component $\sigma_{H,H^*}$ of this braiding is equivalent to the \emph{existence of the antipode} for $H$;
\item braided modules over this system (cf. \cite{Lebed2} for a definiton) are precisely \emph{Hopf modules} over $H$;
\item the braided homology theory for this system (cf. Section \ref{sec:hom}) includes the \emph{Gerstenhaber-Schack bialgebra homology}, defined in \cite{GS90}.
\end{itemize}
\end{example}

\begin{example}\label{ex:YD_as_YD_system}
The $r=1$ case gives a braiding on the system $(H,M,H^*)$ for a YD module algebra $M$ over $H.$ Now observe that one can substitute the $\sigma_{M,M} = \sigma_{Ass}(M)$ component of $\osigma_{YDAlg}$ with the trivial one, $\sigma_{M,M} = \Id_{M \otimes M},$ since the instances of the cYBE involving two copies of $M$ (which are the only instances affected by the modification of $\sigma_{M,M}$) become automatic. Moreover, with this new $\sigma_{M,M}$ the UAA structure on $M$ is no longer necessary. One thus obtains a braiding on $(H,M,H^*)$ for any $H$-YD module $M,$ which coincides with the one in Theorem \ref{thm:YDsystem_for_YDmod}. This gives an alternative proof of that theorem.
\end{example}

\begin{example}\label{ex:ModAlg_as_YD_system}
If $M$ is just a module algebra, one can extract a braided system $(H,M)$ from the construction of the previous example. Studying braided differentials for this system (cf. Section \ref{sec:hom}), one recovers the \emph{deformation bicomplex of module algebras}, introduced by D. Yau in \cite{Yau}. Note that in this example $H$ need not be finite-dimensional.
\end{example}

\begin{example}\label{ex:multi_YD_as_YD_system}
The argument from example \ref{ex:YD_as_YD_system} also gives a braiding on the system $(H,M_1, \ldots,$ %
 $M_r ,H^*)$ for $H$-YD modules $M_1, \ldots,M_r,$ a priori without UAA structures. This braiding is obtained by replacing all the $\sigma_{M_i,M_i}$'s from Corollary \ref{thm:YDsystem_ex_crl} with the trivial ones, $\sigma_{M_i,M_i} = \Id_{M_i \otimes M_i}.$ It is denoted by $\osigma_{YD}.$
\end{example}

\subsection{Yetter-Drinfel$'$d systems: properties}\label{sec:YD_systems_prop}

Now let us study several properties of the braiding from Corollary \ref{thm:YDsystem_ex_crl}, namely its \emph{functoriality} and the \emph{precision} of encoding YD module algebra axioms.

\begin{proposition}\label{thm:YDsyst_br_funct}
In the settings of Proposition \ref{thm:YDsystem_ex}, one has a faithful functor
\begin{align}
({_H}\! \YDAlg ^H)^{\times r} & \stackrel{\ibr}{\longhookrightarrow} \BrSyst_{r+2},\label{eqn:cat_incl_YD_BrSyst}\\
\overline{M} =(M_1, \ldots, M_r) & \longmapsto (H, \overline{M},H^*; \osigma_{YDAlg}),\notag\\
\of =(f_i:M_i \rightarrow N_i)_{1 \le i \le r} & \longmapsto (\Id_H, \of, \Id_{H^*}).\notag
\end{align}
Moreover, if a braided morphism from $\ibr(\overline{M})$ to $\ibr(\overline{N})$ has the form $(\Id_H, \of, \Id_{H^*})$ and if the $f_i$'s respect units (in the sense of \eqref{eqn:respect_nu}), then all the $f_i:M_i \rightarrow N_i$ are morphisms in ${_H}\! \YDAlg ^H.$
\end{proposition}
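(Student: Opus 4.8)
The plan is to read the defining equation \eqref{eqn:BrMor} of a braided morphism off component by component and to match each resulting identity with one of the structure-preservation conditions for a morphism in ${_H}\! \YDAlg ^H.$ The guiding observation is that in $(\Id_H,\of,\Id_{H^*})$ the two outer slots carry identities, so every non-trivial instance of \eqref{eqn:BrMor} involves at least one middle slot $M_i.$

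First I would settle well-definedness and functoriality. The object assignment is legitimate by Proposition \ref{thm:YDsystem_ex} combined with Theorem \ref{thm:YD_braided} (equivalently Corollary \ref{thm:YDsystem_ex_crl}), which produce the braided system $(H,\overline{M},H^*;\osigma_{YDAlg}).$ For the morphism assignment I would check \eqref{eqn:BrMor} slot by slot: the components $\sigma_{H,H},\sigma_{H^*,H^*},\sigma_{H,H^*}$ carry identities in both positions and are automatic; $\sigma_{H,M_i}$ reduces exactly to $f_i$ being a left $H$-module morphism; $\sigma_{M_i,H^*}$ to $f_i$ being a right $H$-comodule morphism; $\sigma_{M_i,M_i}=\nu_i\otimes\mu_i$ to $f_i$ respecting $\nu_i$ and $\mu_i$; and the mixed $\sigma_{M_i,M_j}=\crot^{YD}_{M_i,M_j}$ ($i<j$) follows by combining the comodule-morphism property of $f_i$ with the module-morphism property of $f_j.$ All of these are packaged in ``$f_i$ is a morphism in ${_H}\! \YDAlg ^H$''. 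Functoriality ($\ibr(\of\circ\overline{g})=\ibr(\of)\circ\ibr(\overline{g}),$ $\ibr(\Id)=\Id$) is immediate since composition and identities of braided morphisms are taken slotwise, and faithfulness is immediate since $\of$ is recovered as the tuple of middle slots of $\ibr(\of).$

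For the converse I assume $(\Id_H,\of,\Id_{H^*})$ is braided and each $f_i$ respects units, and I extract the three defining properties of a morphism in ${_H}\! \YDAlg ^H$ from three families of components. From $\sigma_{H,M_i}$ one obtains, on $h\otimes m,$ the identity $f_i(h_{(2)}m)\otimes h_{(1)}=h_{(2)}f_i(m)\otimes h_{(1)};$ applying $\Id_{N_i}\otimes\varepsilon_H$ and the counit axiom gives $f_i(hm)=hf_i(m),$ so $f_i$ is a module morphism. From $\sigma_{M_i,M_i}$ one obtains
\[
f_i(\one_{M_i})\otimes f_i(m_1 m_2)=\one_{N_i}\otimes f_i(m_1)f_i(m_2);
\]
the unit hypothesis \eqref{eqn:respect_nu} turns the first tensorands into $\one_{N_i},$ and injectivity of $x\mapsto\one_{N_i}\otimes x$ (valid unless $N_i=0,$ in which case $f_i=0$ is trivially a morphism) yields $f_i(m_1 m_2)=f_i(m_1)f_i(m_2);$ thus $f_i$ is a UAA morphism.

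The main obstacle is the comodule-morphism property, extracted from $\sigma_{M_i,H^*},$ and this is precisely where finite-dimensionality of $H$ is indispensable. The component condition reads, on $m\otimes l,$
\[
(m_{(1)}\cdot l)\otimes f_i(m_{(0)})=(f_i(m)_{(1)}\cdot l)\otimes f_i(m)_{(0)}\in H^*\otimes N_i,\qquad \forall\, l\in H^*,
\]
with $\cdot$ the action \eqref{eqn:H_acts_on_H*}. Evaluating the $H^*$-tensorand at $\one_H$ and using $(h\cdot l)(\one_H)=l(h)$ (which holds because evaluation at $\one_H$ is the counit of $H^*$) collapses this to
\[
l(m_{(1)})\,f_i(m_{(0)})=l(f_i(m)_{(1)})\,f_i(m)_{(0)}\in N_i,\qquad\forall\, l\in H^*.
\]
The two sides are the images under $\Id_{N_i}\otimes l$ of $(f_i\otimes\Id_H)(\delta_M(m))$ and $\delta_N(f_i(m)),$ both lying in $N_i\otimes H.$ Since $\dim_\k H<\infty,$ the functionals $l\in H^*$ separate the points of $H,$ hence the maps $\Id_{N_i}\otimes l$ separate the points of $N_i\otimes H;$ therefore the two elements coincide, which is exactly the $H$-comodule morphism condition $\delta_N\circ f_i=(f_i\otimes\Id_H)\circ\delta_M.$ Together with the two properties above, each $f_i$ is a morphism in ${_H}\! \YDAlg ^H,$ as claimed.
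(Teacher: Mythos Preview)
Your proof is correct and follows essentially the same component-by-component strategy as the paper's: both check \eqref{eqn:BrMor} on each pair $(H,H),(H,H^*),(H^*,H^*),(H,M_i),(M_i,H^*),(M_i,M_i),(M_i,M_j)$ and match the resulting identities to the UAA, $H$-module and $H$-comodule morphism axioms. The only minor differences are cosmetic: for the $M_i\otimes M_i$ component the paper cancels the $\nu_{N_i}$-tensorand by post-composing with $\mu_{N_i}$ (using the unit axiom as a left inverse) rather than invoking injectivity of $x\mapsto \one_{N_i}\otimes x$, and for the $M_i\otimes H^*$ component the paper simply says ``similarly'' while you spell out the evaluation-at-$\one_H$ / separation argument explicitly.
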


\begin{proof}
Corollary \ref{thm:YDsystem_ex_crl} says that the functor is well defined on objects. It remains to study the compatibility condition \eqref{eqn:BrMor} for the collection $(\Id_H, \of, \Id_{H^*})$ and each component of the braiding $\osigma_{YDAlg}.$
\begin{enumerate}
\item On $H \otimes H,$ $H \otimes H^*$ and $H^* \otimes H^*$ condition \eqref{eqn:BrMor} trivially holds true.
\item On $M_i \otimes M_i,$ condition \eqref{eqn:BrMor} reads
$$(f_i \otimes f_i) \circ (\nu_{M_i} \otimes \mu_{M_i})= (\nu_{N_i} \otimes \mu_{N_i}) \circ (f_i \otimes f_i),$$
which, due to \eqref{eqn:respect_nu}, becomes
$$\nu_{N_i} \otimes (f_i \circ \mu_{M_i})= (\nu_{N_i} \otimes \mu_{N_i}) \circ (f_i \otimes f_i).$$
But this is equivalent to $f_i$ respecting multiplication:
$$f_i \circ \mu_{M_i}= \mu_{N_i} \circ (f_i \otimes f_i).$$
(compose with $\mu_{N_i}$ on the left to get the less evident application), i.e., in the presence of \eqref{eqn:respect_nu}, to $f_i$ being a UAA morphism.
\item On $H \otimes M_i,$ condition \eqref{eqn:BrMor} reads
$$(f_i \otimes \Id_H) \circ c_{H,{M_i}} \circ (\Id_H \otimes \lambda_{M_i}) \circ (\Delta_H \otimes \Id_{M_i}) = c_{H,{N_i}} \circ (\Id_H \otimes \lambda_{N_i}) \circ (\Delta_H \otimes \Id_{N_i}) \circ (\Id_H \otimes f_i),$$
which simplifies as
$$(\Id_H \otimes (f_i \circ \lambda_{M_i})) \circ (\Delta_H \otimes \Id_{M_i}) = 
 (\Id_H \otimes \lambda_{N_i}) \circ (\Id_{H \otimes H} \otimes f_i) \circ (\Delta_H \otimes \Id_{M_i}).$$
 This is equivalent to $f_i$ respecting the $H$-module structure:
$$(f_i \circ \lambda_{M_i})=  \lambda_{N_i} \circ (\Id_{ H} \otimes f_i)$$
(compose with $\varepsilon \otimes \Id_{N_i}$ on the left to get the less evident application).

\item Similarly, on $M_i \otimes H^*$ condition \eqref{eqn:BrMor} is equivalent to $f_i$ respecting the $H$-comodule structure.
\item Condition \eqref{eqn:BrMor} holds true on $M_i \otimes M_j, \: i<j,$ if $f_j$ respects the $H$-module structures and $f_i$ respects the $H$-comodule structures.
\end{enumerate}

The reader is advised to draw diagrams in order to better follow the proof.

This analysis shows that $\ibr$ is well defined on morphisms. Moreover, it shows that if a braided morphism from $\ibr(\overline{M})$ to $\ibr(\overline{N})$ has the form $(\Id_H, \of, \Id_{H^*})$ and if the $f_i$'s respect units, then all the $f_i:M_i \rightarrow N_i$ are simultaneously  morphisms of UAAs (point 2 above), of $H$-modules (point 3) and of $H$-comodules (point 4), which precisely means that they are morphisms in ${_H}\! \YDAlg ^H.$

The faithfulness of the functor is tautological.
\end{proof}

\begin{remark}\label{rmk:YDsyst_br_funct_full}
The second statement of the proposition allows to call the functor $\ibr$ \emph{``essentially full''}. A precise description of $(\alpha,\of, \beta) \in \Hom_{\BrSyst_{r+2}}(\ibr(\overline{M}),\ibr(\overline{N}))$ can be obtained using the results recalled in Example \ref{ex:bialg_as_YD_system}. Namely, if one imposes some additional restrictions ($\alpha$ and $\beta$ should be invertible and respect the units, the $f_i$'s should respect units as well), then 
\begin{itemize}
\item $\alpha$ is a bialgebra automorphism, and coinsides with $(\beta^{-1})^*$;
\item each $f_i$ is a UAA morphism;
\item $\alpha$ and the $f_i$ 's are compatible with the YD structures on the $M_i$'s and $N_i$'s:
\begin{align*}
f_i \circ \lambda_{M_i} &= \lambda_{N_i} \circ (\alpha \otimes f_i): H \otimes M_i\rightarrow N_i,\\
\delta_{N_i} \circ f_i &= (f_i \otimes \alpha) \circ \delta_{M_i}: M_i\rightarrow N_i \otimes H.
\end{align*}
\end{itemize}
\end{remark}

\begin{remark}\label{rmk:YDsyst_br_funct_noUAA}
Following Example \ref{ex:multi_YD_as_YD_system}, one gets a similar ``essentially full'' and faithful functor
\begin{align*}
({_H}\! \YD ^H)^{\times r} & \stackrel{\ibr}{\longhookrightarrow} \BrSyst_{r+2},\\
\overline{M} =(M_1, \ldots, M_r) & \longmapsto (H, \overline{M},H^*; \osigma_{YD}),\\
\of =(f_i:M_i \rightarrow N_i)_{1 \le i \le r} & \longmapsto (\Id_H, \of, \Id_{H^*}).
\end{align*}
\end{remark}

We next show that each instance of the cYBE for the braiding $\osigma_{YDAlg}$ corresponds precisely to an axiom from the Definition \ref{def:YDAlg} of YD module algebra structure, modulo some minor normalization constraints. We thus obtain a ``braided'' interpretation of each of these axioms, extending the classical ``braided'' interpretation of the YD compatibility condition \eqref{eqn:YD} (Corollary \ref{thm:YD_br_local}). 

\begin{proposition}\label{thm:YDsyst_br_precision}
Take a finite-dimensional $\k$-linear bialgebra $H,$ a vector space $V$ over $\k,$ and morphisms $\mu: V \otimes V \rightarrow V,$ $\nu: \k \rightarrow V,$ $\lambda: H \otimes V \rightarrow V,$ $\delta: V \rightarrow V \otimes H,$ a priori satisfying no compatibility relations. Consider the collection of $\sigma$'s defined in Corollary \ref{thm:YDsystem_ex_crl} (for $r=1$). One has the following equivalences:

\begin{enumerate}
\item cYBE for $H \otimes V \otimes H^*$ $\Longleftrightarrow$ YD compatibility condition \eqref{eqn:YD} for $(V, \lambda, \delta).$
\item  cYBE for $H \otimes  H \otimes V$ $\Longleftrightarrow$ $\lambda$ defines an $H$-module. 

Here one supposes that $\nu_H$ acts via $\lambda$ by identity (in the sense of \eqref{eqn:Amod'}).
\item  cYBE for $V \otimes H^* \otimes H^*$  $\Longleftrightarrow$ $\delta$ defines an $H$-comodule.

Here one supposes that $\varepsilon_H$ coacts via $\delta$ by identity (in the sense dual to \eqref{eqn:Amod'}).
\item  cYBE for $H \otimes V \otimes V$ $\Longleftrightarrow$ $\lambda$ respects the multiplication $\mu$ (in the sense of \eqref{eqn:YDAlg'}).

Here one supposes $\lambda$ to respect $\nu$ (in the sense of \eqref{eqn:YDAlg'''}).
\item  cYBE for $V \otimes V \otimes H^*$ $\Longleftrightarrow$ $\delta$ respects the multiplication $\mu$ (in the sense of \eqref{eqn:YDAlg}).

Here one supposes $\delta$ to respect $\nu$ (in the sense of \eqref{eqn:YDAlg''}).
\item  cYBE for $V \otimes V \otimes V$ $\stackrel{\nu \text{ is a unit for } \mu}{\Longleftrightarrow}$ associativity of $\mu.$ 
\end{enumerate}
\end{proposition}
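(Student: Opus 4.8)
The plan is to treat the proposition as the component-by-component unwinding of the colored Yang-Baxter equation \eqref{eqn:cYB} for the braiding $\osigma_{YDAlg}$ on the rank $3$ system $(H,V,H^*)$, exploiting the fact that six of the ten instances of \eqref{eqn:cYB} each ``see'' exactly one piece of the data $(\mu,\nu,\lambda,\delta)$ on $V$. Before computing I would fix the organizing principle coming from Proposition \ref{thm:BrSystemUAA}: the diagonal components $\sigma_{H,H}$, $\sigma_{V,V}$, $\sigma_{H^*,H^*}$ are associativity braidings $\sigma_{Ass}$ (or their right versions, cf. Remark \ref{rmk:pre_mirror}), while the off-diagonal ones are the $\crot^{YD}$-type braidings. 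Since $H$ is a genuine bialgebra, its own module, comodule and associativity data hold automatically, so the only ``unknown'' is $V$. This splits the six triples into three conceptual types: the unique strictly increasing triple $H\otimes V\otimes H^*$ (item 1); the four triples with one repeated index, namely $H\otimes H\otimes V$, $V\otimes H^*\otimes H^*$, $H\otimes V\otimes V$, $V\otimes V\otimes H^*$ (items 2--5); and the diagonal triple $V\otimes V\otimes V$ (item 6). The forward implications are essentially already contained in Theorem \ref{thm:YD_braided} and Proposition \ref{thm:YDsystem_ex}; the genuine content is that each triple isolates precisely its own axiom, which also yields the converse implications.

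For item 6 I would simply invoke the defining property of the associativity braiding $\sigma_{Ass}(V)=\nu\otimes\mu$ recalled after Proposition \ref{thm:BrSystemUAA} (from \cite{Lebed1}): with $\nu$ a unit for $\mu$, the YBE for $\sigma_{Ass}(V)$ on $V^{\otimes 3}$ is equivalent to the associativity of $\mu$. For items 2--5 I would use the second tool from the proof of Theorem \ref{thm:YD_braided} and from Proposition \ref{thm:BrSystemUAA}: with an associativity braiding in the repeated slot, \eqref{eqn:cYB} on a triple $(i,j,j)$ (resp.\ $(i,i,j)$) is equivalent to the naturality of $\sigma_{i,j}$ with respect to $\mu_j$ (resp.\ $\mu_i$), in the sense of the multi-object version of \eqref{eqn:BrAlg}--\eqref{eqn:BrAlg'}, once $\sigma_{i,j}$ is natural with respect to the relevant unit. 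It then remains to unwind each such naturality with the explicit Sweedler formulas of Corollary \ref{thm:YDsystem_ex_crl}: naturality of $\sigma_{H,V}\colon h\otimes m\mapsto h_{(2)}m\otimes h_{(1)}$ with respect to $\mu_H$ reproduces the module axiom \eqref{eqn:Amod} (item 2), and with respect to $\mu_V$ reproduces the twisted action-multiplication compatibility \eqref{eqn:YDAlg'} (item 4); dually, for $\sigma_{V,H^*}$ one recovers the comodule axiom (item 3) and \eqref{eqn:YDAlg} (item 5), reading the coaction off through the nondegenerate $H$--$H^*$ evaluation. The normalization hypotheses stated in each item (``$\nu_H$ acts by identity'', ``$\varepsilon_H$ coacts by identity'', and $\lambda$ or $\delta$ respecting $\nu$) are exactly what the converse direction needs: they let one strip off the spurious unit/counit insertions carried by the associativity braidings by composing with $\varepsilon\otimes\Id$, $\Id\otimes\nu$, etc., precisely as in the proof of Proposition \ref{thm:YDsyst_br_funct}.

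The main obstacle is item 1, the genuine mixed Yang-Baxter equation on $H\otimes V\otimes H^*$ relating the three distinct off-diagonal braidings $\sigma_{H,V}$, $\sigma_{H,H^*}$ and $\sigma_{V,H^*}$. Here I would compute both composites of \eqref{eqn:cYB} on a generic $h\otimes m\otimes l$ directly in Sweedler notation, the graphical calculus of Fig.~\ref{pic:YD_YB} keeping the bookkeeping under control, using coassociativity of $\Delta_H$ on the $H$-side, the explicit $H$-action \eqref{eqn:H_acts_on_H*} on the $H^*$-side, and nothing about $V$ beyond its maps $\lambda,\delta$. Both sides land in $H^*\otimes V\otimes H$ and are coupled to $l\in H^*$ only through evaluation; since $H$ is finite-dimensional the pairing is nondegenerate, so equality for all $l$ is equivalent to a single identity of morphisms $H\otimes V\to V\otimes H$, which one recognizes as exactly the Yetter-Drinfel$'$d compatibility \eqref{eqn:YD}. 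This delivers both directions of item 1 at once. I expect this computation, together with the repeated use of finite-dimensionality to convert evaluation-coupled equalities into morphism identities (which recurs in items 3 and 5), to be the most laborious part; everything else reduces to the already-established $\sigma_{Ass}$ and naturality machinery.
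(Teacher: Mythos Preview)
Your proposal is correct and matches the paper's approach. The paper proves only item~1 explicitly---graphically writing out the cYBE on $H\otimes V\otimes H^*$, simplifying via the naturality of $c$, and then applying $\varepsilon_{H^*}\otimes\Id_V\otimes\varepsilon_H$ to extract \eqref{eqn:YD}---which is exactly your Sweedler/nondegenerate-pairing computation; for the remaining items the paper simply says ``the other points being similar'', so your organization via Proposition~\ref{thm:BrSystemUAA} and the $\sigma_{Ass}$ machinery is a faithful (and somewhat more explicit) unpacking of what the paper intends.
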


\begin{proof}
We prove only the first equivalence here, the other points being similar.

The cYBE for $H \otimes V \otimes H^*$ is graphically depicted on Fig. \ref{pic:cYBE_YD} a). Using the naturality of $c,$ one transforms this into the diagram on Fig. \ref{pic:cYBE_YD} b).
Applying $\varepsilon_{H^*} \otimes V \otimes \varepsilon_H$ to both sides and using the definition of $\Delta_{H^*}=\mu_H^*$ in terms of $\mu_H,$ one gets precisely \eqref{eqn:YD}.

\begin{center}
\begin{tikzpicture}[scale=0.8]
\draw [ rounded corners](0,-0.5)--(0,0.25)--(1,0.75)--(1,1.25)--(2,1.75)--(2,3);
\draw [colorM, thick, rounded corners](1,-0.5)--(1,0.25)--(0,0.75)--(0,2.25)--(1,2.75)--(1,3);
\draw [dashed, rounded corners](2,-0.5)--(2,1.25)--(1,1.75)--(1,2.25)--(0,2.75)--(0,3);
\draw (0,-0.25) -- (1,0.25);
\fill [teal] (0,-0.25) circle (0.1);
\fill [teal] (1,0.25) circle (0.12);
\node  at (0,-0.25) [left] {$\Delta_H$};
\node  at (1,0.25) [right] {$\lambda$};
\draw [rounded corners] (1,0.8) -- (1.3,1.2) -- (1.5,1.25);
\draw [dashed, rounded corners] (2,0.8)-- (1.7,1.2) -- (1.5,1.25);
\fill  (1.5,1.25) circle (0.1);
\fill [teal] (1,0.8) circle (0.1);
\fill [teal] (2,0.8) circle (0.1);
\node  at (1.2,1) [left] {$\Delta_H$};
\node  at (2,1) [right] {$\mu_H^*$};
\node  at (1.5,1.25) [below] {$ev$};
 \draw [rounded corners] (0,1.8) -- (0.3,2.2) -- (0.5,2.25);
\draw [dashed, rounded corners] (1,1.8)-- (0.7,2.2) -- (0.5,2.25);
\fill  (0.5,2.25) circle (0.1);
\fill [teal] (0,1.8) circle (0.12);
\fill [teal] (1,1.8) circle (0.1);
\node  at (0,2) [left] {$\delta$};
\node  at (1,2) [right] {$\mu_H^*$};
\node  at (0.5,2.25) [below] {$ev$};
\node  at (1,-0.5) [below] {$H \otimes V \otimes H^*$};
\node  at (3.5,1){$=$};
\node  at (3,-1.5){$a)$};
\end{tikzpicture}
\begin{tikzpicture}[scale=0.8]
\draw [colorM, thick, rounded corners](1,0.5)--(1,1.25)--(2,1.75)--(2,3.25)--(1,3.75)--(1,4);
\draw [rounded corners](0,0.5)--(0,2.25)--(1,2.75)--(1,3.25)--(2,3.75)--(2,4);
\draw [dashed, rounded corners](2,0.5)--(2,1.25)--(1,1.75)--(1,2.25)--(0,2.75)--(0,4);
\node  at (-0.5,-0.7){ };
\node  at (1,0.5) [below] {$H \otimes V \otimes H^*$};
\node  at (5,0.5){};
\draw (1,2.75) -- (2,3.25);
\fill [teal] (1,2.75) circle (0.1);
\fill [teal] (2,3.25) circle (0.12);
\node  at (1.1,2.85) [left] {$\Delta_H$};
\node  at (2,3.25) [right] {$\lambda$};
\draw [rounded corners] (1,0.8) -- (1.3,1.2) -- (1.5,1.25);
\draw [dashed, rounded corners] (2,0.8)-- (1.7,1.2) -- (1.5,1.25);
\fill  (1.5,1.25) circle (0.1);
\fill [teal] (1,0.8) circle (0.12);
\fill [teal] (2,0.8) circle (0.1);
\node  at (1.1,1) [left] {$\delta$};
\node  at (2,1) [right] {$\mu_H^*$};
\node  at (1.5,1.25) [below] {$ev$};
\draw [rounded corners] (0,1.8) -- (0.3,2.2) -- (0.5,2.25);
\draw [dashed, rounded corners] (1,1.8)-- (0.7,2.2) -- (0.5,2.25);
\fill  (0.5,2.25) circle (0.1);
\fill [teal] (0,1.8) circle (0.1);
\fill [teal] (1,1.8) circle (0.1);
\node  at (0,2) [left] {$\Delta_H$};
\node  at (1,2) [right] {$\mu_H^*$};
\node  at (0.5,2.25) [below] {$ev$};
\end{tikzpicture}
\begin{tikzpicture}[xscale=0.6, yscale = 0.5]
 \draw[rounded corners] (0,0) -- (0,3) -- (2,5);
 \draw[colorM, thick,rounded corners] (1,0) -- (1,5);
 \draw[dashed, rounded corners] (2,0) -- (2,3) -- (0,5);
 \draw (0,0.5) -- (1,1);
 \fill[teal] (1,1) circle (0.12);
 \fill[teal] (0,0.5) circle (0.1); 
 \draw [rounded corners] (0,1.3) -- (0.6,1.6) -- (1.3,1.75);
 \draw [dashed, rounded corners] (2,1.3) -- (1.3,1.75);
 \fill  (1.3,1.75) circle (0.1);
 \fill [teal] (0,1.3) circle (0.1);
 \fill [teal] (2,1.3) circle (0.1);
 \draw [rounded corners] (1,2.3) -- (1.3,2.7) -- (1.5,2.75);
 \draw [dashed, rounded corners] (2,2.3)-- (1.7,2.7) -- (1.5,2.75);
 \fill  (1.5,2.75) circle (0.1);
 \fill [teal] (1,2.3) circle (0.12);
 \fill [teal] (2,2.3) circle (0.1);
 \node at (1,0) [below] {$H \otimes V \otimes H^*$};
 \node  at (3.5,2.5){$=$};
 \node  at (3.5,-1.5){$b)$};
\end{tikzpicture}
\begin{tikzpicture}[xscale=0.6, yscale = 0.5]
 \draw[rounded corners] (0,0) -- (0,3) -- (2,5);
 \draw[colorM, thick,rounded corners] (1,0) -- (1,5);
 \draw[dashed, rounded corners] (2,0) -- (2,3) -- (0,5);
 \draw (0,2) -- (1,2.5);
 \fill[teal] (1,2.5) circle (0.12);
 \fill[teal] (0,2) circle (0.1); 
 \draw [rounded corners] (0,1.3) -- (0.6,1.6) -- (1.3,1.75);
 \draw [dashed, rounded corners] (2,1.3) -- (1.3,1.75);
 \fill  (1.3,1.75) circle (0.1);
 \fill [teal] (0,1.3) circle (0.1);
 \fill [teal] (2,1.3) circle (0.1);
 \draw [rounded corners] (1,0.9) -- (1.3,1.2) -- (1.5,1.25);
 \draw [dashed, rounded corners] (2,0.8)-- (1.7,1.2) -- (1.5,1.25);
 \fill  (1.5,1.25) circle (0.1);
 \fill [teal] (1,0.8) circle (0.12);
 \fill [teal] (2,0.8) circle (0.1);
 \node at (1,0) [below] {$H \otimes V \otimes H^*$};
 \node  at (3.5,-2){ };
\end{tikzpicture}
   \captionof{figure}{cYBE for $H \otimes V \otimes H^*$ $\Longleftrightarrow$ \eqref{eqn:YD}}\label{pic:cYBE_YD}
\end{center}
\end{proof}

\subsection{Tensor product of Yetter-Drinfel$'$d modules}\label{sec:YD_tensor}

The aim of this section is to explain the definition \eqref{eqn:tensor_mod_alter}-\eqref{eqn:tensor_comod_alter} of tensor product of YD modules from the braided point of vue, using the braided interpretation of YD structure presented in Proposition \ref{thm:YDsyst_br_precision}.

Start with an easy general observation.

\begin{lemma}\label{thm:glueing_components}
Let $(\oV,\osigma)$ be a rank $4$ braided system in a monoidal category $\CC.$ A braiding can then be defined for the rank $3$ system $(V_1, V_2 \otimes V_3, V_4)$ in the following way:
\begin{itemize}
\item keep $\sigma_{V_1,V_1},$ $\sigma_{V_1,V_4}$ and $\sigma_{V_4,V_4}$ from the previous system;
\item put $\sigma_{V_2 \otimes V_3,V_2 \otimes V_3} = \Id_{(V_2 \otimes V_3) \otimes (V_2 \otimes V_3)}$;
\item define
\begin{align*}
\sigma_{V_1,V_2 \otimes V_3} &= (\Id_{V_2} \otimes \sigma_{1,3}) \otimes (\sigma_{1,2} \otimes \Id_{V_3}),\\
\sigma_{V_2 \otimes V_3,V_4} &= (\sigma_{2,4} \otimes \Id_{V_3}) \otimes (\Id_{V_2} \otimes \sigma_{3,4}).
\end{align*}
\end{itemize}
\end{lemma}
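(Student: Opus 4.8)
The plan is to verify directly that the proposed morphisms satisfy the colored Yang--Baxter equation \eqref{eqn:cYB} on every admissible triple of the rank $3$ system, where I write $W_1 := V_1$, $W_2 := V_2 \otimes V_3$, $W_3 := V_4$. First I would read the two central $\otimes$ signs in the definitions of $\sigma_{V_1,V_2\otimes V_3}$ and $\sigma_{V_2\otimes V_3,V_4}$ as compositions, so that these are precisely the ``hexagon'' composites of the type \eqref{eqn:br_cat}--\eqref{eqn:br_cat2}: concretely $\sigma_{W_1,W_2} = (\Id_{V_2}\otimes\sigma_{1,3})\circ(\sigma_{1,2}\otimes\Id_{V_3})$ pushes the $V_1$-strand first across $V_2$ and then across $V_3$, while $\sigma_{W_2,W_3} = (\sigma_{2,4}\otimes\Id_{V_3})\circ(\Id_{V_2}\otimes\sigma_{3,4})$ pushes the $V_4$-strand across $V_3$ then $V_2$. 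A quick check on domains and codomains confirms these land in $W_2\otimes W_1$ and $W_3\otimes W_2$ respectively, and every elementary crossing occurring is an admissible component $\sigma_{a,b}$ (with $a\le b$) of the given rank $4$ system, so the new braiding is well defined.

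Next I would sort the ten triples $W_i\otimes W_j\otimes W_k$, $i\le j\le k$, into three groups. The four triples with no $W_2$-entry, namely $(W_1,W_1,W_1)$, $(W_1,W_1,W_3)$, $(W_1,W_3,W_3)$ and $(W_3,W_3,W_3)$, literally coincide with the original cYBEs on $(V_1,V_1,V_1)$, $(V_1,V_1,V_4)$, $(V_1,V_4,V_4)$ and $(V_4,V_4,V_4)$, hence hold by hypothesis. The three triples containing at least two $W_2$-entries, namely $(W_1,W_2,W_2)$, $(W_2,W_2,W_2)$ and $(W_2,W_2,W_3)$, are immediate: since $\sigma_{W_2,W_2}=\Id$, the factor it contributes on each side of \eqref{eqn:cYB} is an identity, and both sides collapse to the same composite, so no property of the other components is even used.

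The real content lies in the three remaining triples, each containing exactly one $W_2$-entry: $(W_1,W_1,W_2)$, $(W_1,W_2,W_3)$ and $(W_2,W_3,W_3)$. Note that $\sigma_{W_2,W_2}$ never intervenes here, and that within the single cable the strands $V_2,V_3$ stay parallel (the construction never uses $\sigma_{2,3}$). For each triple I would ``uncable'' the $V_2\otimes V_3$ strand into two parallel strands and rewrite both sides of \eqref{eqn:cYB} as braid words on the four ordered objects. The two words realize the same permutation, and the plan is to transform one into the other using only third Reidemeister moves, each of which is an instance of the original cYBE on an admissible triple, together with interchange-law (bifunctoriality) moves that commute crossings acting on disjoint tensor factors. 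This reduces $(W_1,W_1,W_2)$ to the cYBEs on $(V_1,V_1,V_2)$ and $(V_1,V_1,V_3)$; it reduces $(W_2,W_3,W_3)$, mirror-symmetrically, to $(V_2,V_4,V_4)$ and $(V_3,V_4,V_4)$; and it reduces the central case $(W_1,W_2,W_3)$ to $(V_1,V_2,V_4)$ together with $(V_1,V_3,V_4)$.

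The main obstacle is the bookkeeping in this last case $(W_1,W_2,W_3)$, where the $V_1$-strand crosses the whole cable and the cable crosses the $V_4$-strand, so that after uncabling one must track the precise order in which the five elementary crossings $\sigma_{1,2},\sigma_{1,3},\sigma_{1,4},\sigma_{2,4},\sigma_{3,4}$ appear and exhibit an explicit chain of cYBE and interchange applications carrying the left word to the right one. I would carry this out by drawing the braid diagrams, as is done throughout the paper, where the desired equality becomes a visibly valid sequence of Reidemeister-III moves on admissible triangles; all the verifications are then routine once the diagrammatic reduction is set up.
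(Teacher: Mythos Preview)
Your proposal is correct and follows essentially the same three-case partition as the paper's proof: triples not involving $W_2$ reduce to the old system, triples with $W_2$ appearing at least twice are trivial because $\sigma_{W_2,W_2}=\Id$, and triples with exactly one $W_2$ are handled by repeated application of the original cYBE instances. The paper states this in three sentences without naming the specific reductions, whereas you spell out which old cYBE triples are used in each case; your version is more detailed but not different in substance.
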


\begin{proof}
The instances of the cYBE not involving $V_2 \otimes V_3$ hold true since they were true in the original braided system. Those involving $V_2 \otimes V_3$ at least twice are trivially true. The remaining instances (those involving $V_2 \otimes V_3$ exactly once) are verified by applying the instances of the cYBE coming from the original braided system several times.
\end{proof}

Certainly, this \emph{gluing procedure} can be applied in a similar way to any two or more consecutive components in a braided system of any rank.

\medskip
Applying the gluing procedure to the braided system from Example \ref{ex:multi_YD_as_YD_system} ($r=2$) and slightly rewriting the braiding obtained (using the coassociativity of $H$ and of $H^*$), one gets

\begin{lemma}
Given YD modules $M_1$ and $M_2$ over a finite-dimensional $\k$-linear bialgebra $H,$ one has the following braiding on $(H,M_1 \otimes M_2, H^*)$:
\begin{align*}
\sigma_{H,H} &:= \sigma_{Ass}^r(H) = \mu_H \otimes \nu_H,\\
\sigma_{H^*,H^*} &:= \sigma_{Ass}(H^*) = (\varepsilon_H)^* \otimes (\Delta_H)^*,\\
\sigma_{H,H^*} &:= \crot^{YD}_{H,H^*},\\ 
\sigma_{M_1 \otimes M_2,M_1 \otimes M_2} &:= \Id_{(M_1 \otimes M_2) \otimes (M_1 \otimes M_2)},\\
\sigma_{H,M_1 \otimes M_2} &:= c_{H,M_1 \otimes M_2} \circ (\Id_H \otimes \mathring{\lambda}_{M_1 \otimes M_2}) \circ (\Delta_H \otimes \Id_{M_1 \otimes M_2}),\\ 
\sigma_{M_1 \otimes M_2,H^*} &:= c_{M_1 \otimes M_2,H^*} \circ (\Id_{M_1 \otimes M_2} \otimes \lambda_{H^*}) \circ (\mathring{\delta}_{M_1 \otimes M_2} \otimes \Id_{H^*}),
\end{align*}
where $\lambda_{H^*}$ denotes the action \eqref{eqn:H_acts_on_H*} of $H$ on $H^*,$ and $\mathring{\lambda}_{M_1 \otimes M_2}$ and $\mathring{\delta}_{M_1 \otimes M_2}$ are defined by \eqref{eqn:tensor_mod_alter} and \eqref{eqn:tensor_comod_alter}.
\end{lemma}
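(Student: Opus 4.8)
The plan is to obtain the asserted braiding not by verifying the ten instances of \eqref{eqn:cYB} from scratch, but by realizing it as the image of an already-established braided system under the gluing procedure of Lemma~\ref{thm:glueing_components}. Concretely, I start from the rank~$4$ braided system $(H,M_1,M_2,H^*)$ carrying the braiding $\osigma_{YD}$ of Example~\ref{ex:multi_YD_as_YD_system}, and glue together its two middle components $M_1$ and $M_2$. Lemma~\ref{thm:glueing_components} (with $V_1=H$, $V_2=M_1$, $V_3=M_2$, $V_4=H^*$) then \emph{automatically} produces a braiding on the rank~$3$ system $(H,M_1\otimes M_2,H^*)$, so that no colored Yang--Baxter equation needs to be re-checked. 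Three of the six components claimed in the statement---$\sigma_{H,H}$, $\sigma_{H,H^*}$ and $\sigma_{H^*,H^*}$---are, by the prescription of that lemma, literally the corresponding components of $\osigma_{YD}$ from Corollary~\ref{thm:YDsystem_ex_crl}, and the self-component $\sigma_{M_1\otimes M_2,M_1\otimes M_2}$ is set to the identity; these four cases are immediate.

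It remains to check that the two mixed glued components coincide with the formulas written in terms of $\mathring{\lambda}_{M_1\otimes M_2}$ and $\mathring{\delta}_{M_1\otimes M_2}$. For the first one I would expand the gluing formula $\sigma_{H,M_1\otimes M_2}=(\Id_{M_1}\otimes\sigma_{H,M_2})\circ(\sigma_{H,M_1}\otimes\Id_{M_2})$ using $\sigma_{H,M_i}=\crot^{YD}_{H,M_i}\colon h\otimes m\mapsto h_{(2)}m\otimes h_{(1)}$. In Sweedler notation this sends $h\otimes m\otimes n$ to $h_{(3)}m\otimes h_{(2)}n\otimes h_{(1)}$, once the nested coproducts are flattened by coassociativity of $\Delta_H$. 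On the other hand, expanding the claimed right-hand side $c_{H,M_1\otimes M_2}\circ(\Id_H\otimes\mathring{\lambda}_{M_1\otimes M_2})\circ(\Delta_H\otimes\Id_{M_1\otimes M_2})$ with $\mathring{\lambda}_{M_1\otimes M_2}$ read off from \eqref{eqn:tensor_mod_alter} gives the very same element $h_{(3)}m\otimes h_{(2)}n\otimes h_{(1)}$, again by coassociativity; so the two agree.

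The component $\sigma_{M_1\otimes M_2,H^*}$ is the genuinely technical point. Expanding $\sigma_{M_1\otimes M_2,H^*}=(\sigma_{M_1,H^*}\otimes\Id_{M_2})\circ(\Id_{M_1}\otimes\sigma_{M_2,H^*})$ with $\sigma_{M_i,H^*}=\crot^{YD}_{M_i,H^*}\colon m\otimes l\mapsto l_{(1)}(m_{(1)})\,l_{(2)}\otimes m_{(0)}$ yields, after flattening by coassociativity of $H^*$, the expression $l_{(1)}(n_{(1)})\,l_{(2)}(m_{(1)})\,l_{(3)}\otimes m_{(0)}\otimes n_{(0)}$ on $m\otimes n\otimes l$. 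Computing instead the claimed right-hand side with $\mathring{\delta}_{M_1\otimes M_2}$ from \eqref{eqn:tensor_comod_alter} and the action $\lambda_{H^*}$ of \eqref{eqn:H_acts_on_H*} produces $l_{(1)}(m_{(1)}n_{(1)})\,l_{(2)}\otimes m_{(0)}\otimes n_{(0)}$. The two match because the comultiplication of $H^*$ is dual to the multiplication of $H$ under the rainbow duality, i.e.\ $l(ab)=l_{(1)}(b)\,l_{(2)}(a)$; applying this identity to $l_{(1)}$ and using coassociativity of $H^*$ turns $l_{(1)}(m_{(1)}n_{(1)})\,l_{(2)}$ into $l_{(1)}(n_{(1)})\,l_{(2)}(m_{(1)})\,l_{(3)}$.

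The main obstacle is precisely this last bookkeeping: keeping the rainbow-duality convention relating $\Delta_{H^*}$ and $\mu_H$ consistent with the Sweedler indices while passing back and forth through the iterated coproducts. This is exactly the ``slight rewriting using the coassociativity of $H$ and of $H^*$'' alluded to in the statement, and once it is carried out the lemma follows with no further work, since the braided-system axioms come for free from Lemma~\ref{thm:glueing_components}.
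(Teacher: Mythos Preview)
Your proposal is correct and follows exactly the approach the paper intends: apply the gluing procedure of Lemma~\ref{thm:glueing_components} to the rank~$4$ system $(H,M_1,M_2,H^*;\osigma_{YD})$ from Example~\ref{ex:multi_YD_as_YD_system}, then rewrite the two mixed components using coassociativity of $H$ and of $H^*$ (and the rainbow-duality relation between $\mu_H$ and $\Delta_{H^*}$). The paper states this in one sentence immediately before the lemma; you have simply spelled out the Sweedler-notation bookkeeping that justifies the ``slight rewriting''.
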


Now plug $\mathring{\lambda}_{M_1 \otimes M_2}$ and $\mathring{\delta}_{M_1 \otimes M_2}$ (and arbitrary $\mu$ and $\nu$) into  Proposition \ref{thm:YDsyst_br_precision}. The preceding lemma ensures the cYBE on $H \otimes (M_1 \otimes M_2) \otimes H^*,$ $H \otimes H \otimes (M_1 \otimes M_2)$ and $(M_1 \otimes M_2)\otimes H^* \otimes H^*$ (since the corresponding components of $\osigma$ from the proposition and from the lemma coincide). The equivalences from the proposition then allow to conclude:

\begin{corollary} 
Given YD modules $M_1$ and $M_2$ over a finite-dimensional $\k$-linear bialgebra $H,$ the morphisms $\mathring{\lambda}_{M_1 \otimes M_2}$ and $\mathring{\delta}_{M_1 \otimes M_2}$ defined by \eqref{eqn:tensor_mod_alter} and \eqref{eqn:tensor_comod_alter} endow $M_1 \otimes M_2$ with a YD module structure.
\end{corollary}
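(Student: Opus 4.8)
The plan is to deduce the statement directly from the braided interpretation of the YD-module-algebra axioms established in Proposition~\ref{thm:YDsyst_br_precision}. Set $V := M_1 \otimes M_2$ and equip it with the maps $\mathring{\lambda}_{M_1 \otimes M_2}$ and $\mathring{\delta}_{M_1 \otimes M_2}$ from \eqref{eqn:tensor_mod_alter}--\eqref{eqn:tensor_comod_alter}, together with arbitrary $\mu$ and $\nu$ (which play no role here). To say that $(V, \mathring{\lambda}_{M_1 \otimes M_2}, \mathring{\delta}_{M_1 \otimes M_2})$ is a YD module over $H$ is to say that $\mathring{\lambda}_{M_1 \otimes M_2}$ defines an $H$-module structure, that $\mathring{\delta}_{M_1 \otimes M_2}$ defines an $H$-comodule structure, and that the compatibility \eqref{eqn:YD} holds. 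By points 2, 3 and 1 of Proposition~\ref{thm:YDsyst_br_precision} applied to this $V$, these three assertions are respectively equivalent to the cYBE on $H \otimes H \otimes V$, on $V \otimes H^* \otimes H^*$, and on $H \otimes V \otimes H^*$.

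First I would observe that the braiding components entering these three instances of the cYBE are \emph{exactly} those supplied by the preceding lemma. Indeed, the formulas for $\sigma_{H,V}$ and $\sigma_{V,H^*}$ prescribed in Proposition~\ref{thm:YDsyst_br_precision} are $\crot^{YD}_{H,V}$ and $\crot^{YD}_{V,H^*}$ built from the chosen action and coaction; feeding in $\mathring{\lambda}_{M_1 \otimes M_2}$ and $\mathring{\delta}_{M_1 \otimes M_2}$ reproduces verbatim the morphisms $\sigma_{H, M_1 \otimes M_2}$ and $\sigma_{M_1 \otimes M_2, H^*}$ of the preceding lemma, while the components $\sigma_{H,H}$, $\sigma_{H^*,H^*}$ and $\sigma_{H,H^*}$ agree in both settings. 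Since the preceding lemma asserts that these morphisms assemble into a genuine rank-$3$ braided system on $(H, M_1 \otimes M_2, H^*)$, the cYBE holds on every triple $V_i \otimes V_j \otimes V_k$ with $i \le j \le k$; in particular it holds on the three triples $H \otimes H \otimes V$ (indices $1,1,2$), $H \otimes V \otimes H^*$ (indices $1,2,3$), and $V \otimes H^* \otimes H^*$ (indices $2,3,3$) that I need.

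It remains to check the two normalization hypotheses required by Proposition~\ref{thm:YDsyst_br_precision}: that $\nu_H$ acts on $V$ via $\mathring{\lambda}_{M_1 \otimes M_2}$ by the identity, and dually that $\varepsilon_H$ coacts via $\mathring{\delta}_{M_1 \otimes M_2}$ by the identity. For the first, I would use $\Delta_H^{op} \circ \nu_H = \nu_H \otimes \nu_H$ together with the fact that each $\lambda_{M_i}$ is an $H$-action (so $\nu_H$ acts trivially); plugging this into \eqref{eqn:tensor_mod_alter} collapses $\mathring{\lambda}_{M_1 \otimes M_2} \circ (\nu_H \otimes \Id_V)$ to $\Id_V$. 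The dual computation, using $\varepsilon_H \circ \mu_H = \varepsilon_H \otimes \varepsilon_H$ and the counit property of each $\delta_{M_i}$ in \eqref{eqn:tensor_comod_alter}, gives the second. Both are short graphical verifications.

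Having secured the three cYBE instances and the two normalization constraints, the equivalences of Proposition~\ref{thm:YDsyst_br_precision} let me read off simultaneously that $\mathring{\lambda}_{M_1 \otimes M_2}$ is an action, that $\mathring{\delta}_{M_1 \otimes M_2}$ is a coaction, and that they satisfy \eqref{eqn:YD}, i.e. that $(M_1 \otimes M_2, \mathring{\lambda}_{M_1 \otimes M_2}, \mathring{\delta}_{M_1 \otimes M_2})$ is a YD module. The only genuinely delicate point is the bookkeeping of the second paragraph: one must be sure the braiding components produced by the gluing lemma \emph{literally coincide} with those demanded by Proposition~\ref{thm:YDsyst_br_precision} for this particular $V$, rather than merely resembling them, since it is this coincidence that transports the already-established cYBE's into the desired equivalences.
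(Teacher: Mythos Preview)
Your proof is correct and follows essentially the same route as the paper: feed $\mathring{\lambda}_{M_1\otimes M_2}$ and $\mathring{\delta}_{M_1\otimes M_2}$ into Proposition~\ref{thm:YDsyst_br_precision}, observe that the resulting $\sigma$'s coincide with those of the preceding lemma so that the three relevant cYBE instances hold, and conclude via the equivalences of that proposition. You are in fact more careful than the paper, which silently skips the normalization checks (that $\nu_H$ acts and $\varepsilon_H$ coacts by identity) required for points~2 and~3.
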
 

One thus obtains a more conceptual way of ``guessing'' the correct definition of tensor product for YD modules.

\subsection{Braided homology: a short reminder}\label{sec:hom}

In this section we recall the homology theory for braided systems developed in \cite{Lebed2} (see also \cite{Lebed1} for the rank $1,$ i.e. braided object, case). In the next section we apply this general theory to the braided system constructed out of a YD module in Theorem \ref{thm:YDsystem_for_YDmod}.

We first explain what we mean by a \emph{homology theory} for a braided system $(\oV,\osigma)$ in an \underline{additive} monoidal category $\CC$:

\begin{definition}\label{def:DiffCat}
\begin{itemize}
\item  \emph{A degree $-1$ differential} for a collection $(X_n)_{n \ge 0}$ of objects in $\CC$ is a family of morphisms $(d_n: X_n \rightarrow X_{n-1} )_{n>0},$ satisfying 
\begin{align*}
d_{n-1}\circ d_n &= 0 & \forall \: n>1.
\end{align*}

\item \emph{A bidegree $-1$ bidifferential} for  a collection $(X_n)_{n \ge 0}$ of objects in $\CC$  consists of two families of morphisms $(d_n,d'_n: X_n \rightarrow X_{n-1} )_{n>0},$  satisfying
\begin{align*}
d_{n-1} \circ d_n &= d'_{n-1} \circ d'_n = d'_{n-1} \circ d_n +d_{n-1} \circ d'_n = 0 & \forall \: n>1.
\end{align*}

\item An \emph{ordered tensor product} for $(\oV,\osigma)\in \BrSyst_r(\CC)$ is a tensor product of the form
$$V_1^{\otimes m_1}\otimes V_2^{\otimes m_2} \otimes\cdots \otimes V_r^{\otimes m_r}, \qquad m_i \ge 0.$$
\item The \emph{degree} of such a tensor product is the sum $\sum_{i=1}^r m_i.$ 
\item The direct sum of all ordered tensor products of degree $n$ is denoted by $T(\oV)_n^\rightarrow.$

\item  {A (bi)degree $-1$ (bi)differential} for $(\oV,\osigma)$ is a (bi)degree $-1$ (bi)differential for $(T(\oV)_n^\rightarrow)_{n \ge 0}.$ 
\end{itemize}
\end{definition}

One also needs the ``braided'' notion of character, restricting in particular examples to the familiar notions of character for several algebraic structures such as UAAs and Lie algebras (cf. \cite{Lebed1}).

\begin{definition}\label{def:BrChar}
A \emph{braided character} for $(\oV,\osigma)\in \BrSyst_r(\CC)$ is a rank $r$ braided system morphism from $(\oV,\osigma)$ to $(\II, \ldots, \II; \sigma_{i,j} = \Id_{\II} \forall \: i<j).$
\end{definition}

In other words, it is a collection of morphisms $(\zeta_i:V_i \rightarrow \II)_{1 \le i \le r}$ satisfying the compatibility condition
\begin{align}
(\zeta_j \otimes \zeta_i) \circ \sigma_{i,j} &= \zeta_i \otimes \zeta_j & \forall i \le j.\label{eqn:br_char}
\end{align}

We now exhibit a bidegree $-1$ bidifferential for an arbitrary braided system endowed with braided characters; see \cite{Lebed2} for motivations, proofs, a multi-quantum shuffle interpretation and properties of this construction.

\begin{theorem}\label{thm:BraidedSimplHomCat}
Take a braided system $(\oV,\osigma)$ in an additive monoidal category $\CC,$ equipped with two braided characters $\overline{\zeta}$ and $\overline{\xi}.$ The families of morphisms 
\begin{align*}
({^{\zeta}}\! d)_n &:= \sum_{i=1}^n  (\zeta_*)^1 \circ (-\sigma_{*,*})^1 \circ (-\sigma_{*,*})^2 \circ \cdots \circ (-\sigma_{*,*})^{i-1},\\
(d^{\xi})_n &:= (-1)^{n-1} \sum_{i=1}^n  (\xi_*)^n \circ (-\sigma_{*,*})^{n-1} \circ \cdots \circ (-\sigma_{*,*})^{i+1} \circ (-\sigma_{*,*})^i.
\end{align*}
from $T(\oV)_{n}^\rightarrow$ to $T(\oV)_{n-1}^\rightarrow$ define a bidegree $-1$ tensor bidifferential. Here the stars $*$ mean that each time one should choose the component of $\osigma, \overline{\zeta}$ or $\overline{\xi}$ corresponding to the $V_k$'s on which it acts. Further, notation \eqref{eqn:phi_i} for superscripts is used.
\end{theorem}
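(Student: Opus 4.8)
The plan is to verify separately the three defining relations of a bidegree $-1$ bidifferential: $({^{\zeta}}\!d)_{n-1}\circ({^{\zeta}}\!d)_n=0$, then $(d^{\xi})_{n-1}\circ(d^{\xi})_n=0$, and finally the mixed relation $(d^{\xi})_{n-1}\circ({^{\zeta}}\!d)_n+({^{\zeta}}\!d)_{n-1}\circ(d^{\xi})_n=0$. I would first observe that $d^{\xi}$ is expected to be, up to the global sign $(-1)^{n-1}$ and the substitution $\overline{\zeta}\rightsquigarrow\overline{\xi}$, the left--right mirror image of ${^{\zeta}}\!d$ (a formal consequence of the symmetry of \eqref{eqn:cYB} and \eqref{eqn:br_char} under reflecting diagrams), so that the second identity reduces to the first; should setting up this reflection rigorously be awkward, the argument for the first identity transcribes verbatim. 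The genuine content therefore lies in the first and the mixed relations.

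Write ${^{\zeta}}\!d_n=\sum_{i=1}^n d_i$ and $d^{\xi}_n=(-1)^{n-1}\sum_{j=1}^n \partial_j$ for the individual face summands. The map $d_i$ braids the $i$-th tensor factor to the front through the $i-1$ crossings $(-\sigma_{\ast,\ast})^{i-1},\dots,(-\sigma_{\ast,\ast})^1$ and then annihilates it with the matching character component; dually $\partial_j$ carries the $j$-th factor to the rear and evaluates $\xi_\ast$. Two facts drive everything: the colored Yang--Baxter equation \eqref{eqn:cYB}, which lets me slide one such bring-to-front braiding word past another while certifying that the rearranged words represent the \emph{same} morphism; and the braided-character condition \eqref{eqn:br_char}, which in the form $(\zeta_j\otimes\zeta_i)\circ\sigma_{i,j}=\zeta_i\otimes\zeta_j$ lets me push a character through a crossing, trading that crossing for the identity once both of its strands are about to be evaluated. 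Throughout, superscripts follow notation \eqref{eqn:phi_i}.

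For $({^{\zeta}}\!d)^2=0$ I would expand the composite as the double sum $\sum_{i,j}\, d'_j\circ d_i$ and exhibit a sign-reversing pairing of its terms. Removing the factor originally at position $a$ and then that at position $b$ should match removing $b$ then $a$; a direct count shows the two orders employ braiding words whose total number of $-\sigma$ crossings differs by exactly one, so that, once \eqref{eqn:cYB} and the functoriality of $\otimes$ identify the underlying morphisms, the two matched terms carry opposite signs and cancel. The only terms left unpaired are the \emph{diagonal} ones, where the two evaluations land on a single pair of adjacent strands; there the surviving crossing is absorbed by \eqref{eqn:br_char}, exactly as in the rank-one base case $\zeta\circ(\zeta\otimes\Id)\circ(\Id-\sigma)=(\zeta\otimes\zeta)\circ(\Id-\sigma)=0$. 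I would organize the crossing count by an induction on $n$ that peels off the leading tensor factor.

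For the mixed relation the mechanism is the same, but now a front-removal via $\overline{\zeta}$ and a rear-removal via $\overline{\xi}$ act at opposite ends of the tensor word: when they fall on distinct factors they commute after one application of \eqref{eqn:cYB} to let the two braiding words cross, and the prefactor $(-1)^{n-1}$ on $d^{\xi}$ together with the single crossing by which the two degrees differ turn this commutation into anticommutation, while the coincident case is closed by the character conditions for both $\overline{\zeta}$ and $\overline{\xi}$. I expect the \textbf{main obstacle} to be precisely this bookkeeping: tracking the index shifts and the parity of the number of $-\sigma$ crossings as the two face families are reordered, and isolating the boundary terms where the naive sign-reversing involution degenerates and \eqref{eqn:br_char} must be invoked instead. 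Once the colored Yang--Baxter equation has certified that the rearranged braiding words represent the same morphism, the remaining cancellations are purely a matter of signs.
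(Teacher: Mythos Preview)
The paper does not prove this theorem here: it is quoted from \cite{Lebed2}, to which the reader is explicitly referred for ``motivations, proofs, a multi-quantum shuffle interpretation and properties''. So there is no in-paper argument to compare your sketch against.

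Your overall plan (establish presimplicial-type identities for the unsigned faces using \eqref{eqn:cYB} and \eqref{eqn:br_char}, then run the standard alternating-sum cancellation; treat $d^{\xi}$ by mirror symmetry; handle the mixed relation the same way) is correct and is essentially how the result is proved in the cited source. One point in your description is inaccurate, however, and would trip you up if you tried to execute the argument as written. You claim that for non-diagonal pairs the two matched terms are identified by \eqref{eqn:cYB} and functoriality alone, and that \eqref{eqn:br_char} is reserved for residual ``diagonal'' terms. This is not so: the braided-character relation is needed for \emph{every} pair. Writing $\tilde d_i:=(\zeta_*)^1\circ\sigma^1\cdots\sigma^{i-1}$ for the unsigned face, the identity one wants is $\tilde d_a\circ\tilde d_b=\tilde d_{b-1}\circ\tilde d_a$ for $a<b$. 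Commuting the two characters to the front (an index shift) turns both sides into $(\zeta_*\otimes\zeta_*)$ applied to a positive braid word; a crossing count gives lengths $(a-1)+(b-1)$ and $(a-1)+(b-2)$, differing by one, and at the permutation level the first word sends the original strand $b$ to position $1$ and $a$ to position $2$, while the second does the opposite. Hence the two braids are \emph{not} equal, and \eqref{eqn:cYB} alone cannot identify them. What closes the gap is precisely $(\zeta_*\otimes\zeta_*)\circ\sigma^1=\zeta_*\otimes\zeta_*$, i.e.\ \eqref{eqn:br_char}, after which \eqref{eqn:cYB} identifies $\sigma^1$ composed with the shorter braid with the longer one (both are now reduced words for the same permutation). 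So the presimplicial pairing is complete---there are no leftover ``diagonal'' terms---but each individual identity consumes one instance of \eqref{eqn:br_char} in addition to the Yang--Baxter moves. The mixed relation works the same way. With this correction your argument goes through.
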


Pictorially, $({^\zeta}\! d)_n$ for example is a signed sum (due to the use of the negative braiding $-\osigma$) of the terms presented on Fig. \ref{pic:BrDiffCoeffs}. The \emph{sign} can be interpreted via the intersection number of the diagram.

\begin{center}
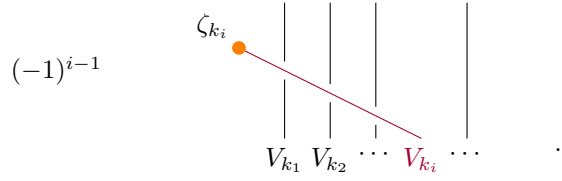

\begin{tikzpicture}[scale=0.6]
 \node at (-4,1.5) {$(-1)^{i-1}$};
 \draw (1,0) -- (1,1.3);
 \draw (1,1.7) -- (1,3);
 \draw (2,0) -- (2,0.8);
 \draw (2,1.2) -- (2,3);
 \draw (3,0) -- (3,0.3);
 \draw (3,0.7) -- (3,3);
 \draw [colori] (4,0) -- (0,2);
 \draw (5,0) -- (5,3);
 \node at (0,2) [above left]{$\zeta_{k_i}$};
 \fill [orange] (0,2) circle (0.15);
 \node at (1,0) [below] {$V_{k_1}$};
 \node at (2,0) [below] {$V_{k_2}$};
 \node at (3,0) [below] {$\cdots$};
 \node at (4,0) [colori,below] {$V_{k_i}$};
 \node at (5,0) [below] {$\cdots$};
 \node at (7,0) [below] {.};
\end{tikzpicture}
   \captionof{figure}{Multi-braided left differential}\label{pic:BrDiffCoeffs}
\end{center}

\begin{corollary}
Any $\ZZ$-linear combination of the families $({^{\zeta}}\! d)_n$ and $(d^{\xi})_n$ from the theorem is a degree $-1$ differential.
\end{corollary}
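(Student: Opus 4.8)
The plan is to deduce the statement immediately from the bidifferential identities furnished by Theorem~\ref{thm:BraidedSimplHomCat}, via a single bilinear expansion that exploits the additivity of $\CC$ (so that composition is $\ZZ$-bilinear on the relevant $\Hom$-groups).

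First I would fix integers $a,b \in \ZZ$, abbreviate $d_n := ({^{\zeta}}\! d)_n$ and $d'_n := (d^{\xi})_n$, and set $D_n := a\, d_n + b\, d'_n$. Since the $D_n$ are by construction morphisms $T(\oV)_n^\rightarrow \to T(\oV)_{n-1}^\rightarrow$, it suffices to verify $D_{n-1} \circ D_n = 0$ for every $n>1$. Expanding by bilinearity of composition,
$$D_{n-1} \circ D_n = a^2\,(d_{n-1}\circ d_n) + ab\,(d_{n-1}\circ d'_n + d'_{n-1}\circ d_n) + b^2\,(d'_{n-1}\circ d'_n).$$
Theorem~\ref{thm:BraidedSimplHomCat} states precisely that $(d_n)$ and $(d'_n)$ form a bidegree $-1$ bidifferential, which by Definition~\ref{def:DiffCat} means $d_{n-1}\circ d_n = 0$, $d'_{n-1}\circ d'_n = 0$ and $d_{n-1}\circ d'_n + d'_{n-1}\circ d_n = 0$ for all $n>1$. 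Hence each of the three grouped terms vanishes, so $D_{n-1}\circ D_n = 0$ and $(D_n)$ is a degree $-1$ differential.

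I expect no genuine obstacle here: the only input beyond the three bidifferential relations is the additivity of $\CC$, which is exactly what licenses distributing the composition over the integer-weighted sum and isolating the coefficient $ab$ of the mixed term. The one subtlety worth flagging is that the cross term must be grouped so as to match the (anticommutativity-type) relation $d_{n-1}\circ d'_n + d'_{n-1}\circ d_n = 0$, rather than being treated as two unrelated compositions. The same argument, and indeed the whole statement, remains valid over an arbitrary commutative ground ring in place of $\ZZ$, but $\ZZ$-coefficients are all that is required for the homological applications developed in the next section.
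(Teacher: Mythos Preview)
Your argument is correct and is exactly the standard one-line expansion the paper has in mind; indeed the paper states the corollary without proof, treating it as an immediate consequence of the bidifferential relations in Definition~\ref{def:DiffCat}. There is nothing to add.
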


\begin{definition}
The \emph{(bi)differentials} from the above theorem and corollary are called \emph{multi-braided}.
\end{definition}

\begin{remark}
\begin{itemize}
\item The constructions from the theorem are \emph{functorial}.
\item Applying the categorical duality to this theorem, one gets a \emph{\underline{co}homology theory} for $(\oV,\osigma).$ 
\item The braided bidifferentials can be shown to come from a structure of \emph{simplicial} (or, more precisely, \emph{cubical}) type.
\end{itemize}
\end{remark}

\subsection{Braided homology of Yetter-Drinfel$'$d modules}\label{sec:YD_hom}

Let us now apply Theorem \ref{thm:BraidedSimplHomCat} to the braided system from Theorem \ref{thm:YDsystem_for_YDmod}. As for braided characters, we use the following ones:

\begin{lemma}
Morphisms $(\varepsilon_H: H \rightarrow \k, \; 0: M  \rightarrow \k, \; 0: H^* \rightarrow \k)$ and $(0: H \rightarrow \k, \;  0: M  \rightarrow \k, \; \varepsilon_{H^*} = \nu_H^*: H^* \rightarrow \k)$ are braided charcters for the braided system from Theorem \ref{thm:YDsystem_for_YDmod}. 
\end{lemma}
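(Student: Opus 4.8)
The plan is to verify the defining condition \eqref{eqn:br_char} of a braided character, namely $(\zeta_j \otimes \zeta_i) \circ \sigma_{i,j} = \zeta_i \otimes \zeta_j$ for all $i \le j$, separately for each of the two candidate collections, running over the six ordered pairs of components of the system $(H, M, H^*)$. The first step I would take is to record the observation that trivialises almost everything: in each collection all but one of the three morphisms is the zero map. For $\overline{\zeta} = (\varepsilon_H, 0, 0)$ only the $H$-slot is nonzero, and for $\overline{\xi} = (0, 0, \varepsilon_{H^*})$ only the $H^*$-slot is nonzero. Hence for any \emph{off-diagonal} pair $(i,j)$ with $i < j$ at least one of the two factors $\zeta_i, \zeta_j$ (resp. $\xi_i, \xi_j$) vanishes, so both sides of \eqref{eqn:br_char} are identically $0$. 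This disposes of all five off-diagonal instances at once, and leaves only the single \emph{diagonal} condition on the component that carries the nonzero character.

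For $\overline{\zeta}$ this remaining condition lives on $H \otimes H$ and reads $(\varepsilon_H \otimes \varepsilon_H) \circ \sigma_{H,H} = \varepsilon_H \otimes \varepsilon_H$. Using the formula $\sigma_{H,H}\colon h_1 \otimes h_2 \mapsto h_1 h_2 \otimes \one$ from Theorem \ref{thm:YDsystem_for_YDmod}, the left-hand side sends $h_1 \otimes h_2$ to $\varepsilon_H(h_1 h_2)\,\varepsilon_H(\one)$. Since $H$ is a bialgebra, its counit is a unital algebra morphism, i.e. $\varepsilon_H(h_1 h_2) = \varepsilon_H(h_1)\varepsilon_H(h_2)$ and $\varepsilon_H(\one) = 1$, so the left-hand side equals $\varepsilon_H(h_1)\varepsilon_H(h_2)$, which is exactly the right-hand side. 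For $\overline{\xi}$ the only condition lives on $H^* \otimes H^*$ and reads $(\varepsilon_{H^*} \otimes \varepsilon_{H^*}) \circ \sigma_{H^*,H^*} = \varepsilon_{H^*} \otimes \varepsilon_{H^*}$; with $\sigma_{H^*,H^*}\colon l_1 \otimes l_2 \mapsto \varepsilon_H \otimes l_1 l_2$ this reduces, by the same reasoning, to $\varepsilon_{H^*} = \nu_H^*$ being a unital algebra morphism $H^* \rightarrow \k$. I would check this directly from the dual structures: evaluating at the unit $\one$ of $H$ and using $\Delta_H(\one) = \one \otimes \one$, formula \eqref{eqn:Delta_dual} gives $\varepsilon_{H^*}(l_1 l_2) = (l_1 l_2)(\one) = l_1(\one)\,l_2(\one) = \varepsilon_{H^*}(l_1)\,\varepsilon_{H^*}(l_2)$, while $\varepsilon_{H^*}(\one_{H^*}) = \varepsilon_{H^*}(\varepsilon_H) = \varepsilon_H(\one) = 1$.

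There is no genuine obstacle here; the entire content is the recognition that each surviving diagonal instance of \eqref{eqn:br_char} is precisely the algebra-morphism property of a counit, all off-diagonal instances being killed by the vanishing of the complementary components. The only point worth flagging is the conceptual symmetry between the two verifications: the computation for $\overline{\xi}$ is the image of the one for $\overline{\zeta}$ under the categorical duality exchanging $H$ with $H^*$ and $(\mu,\nu)$ with $(\Delta,\varepsilon)$, which is exactly the duality interchanging $\sigma_{H,H} = \sigma_{Ass}^r(H)$ with $\sigma_{H^*,H^*} = \sigma_{Ass}(H^*)$. Having confirmed \eqref{eqn:br_char} on every pair for both collections, I would conclude that both are braided characters for the system of Theorem \ref{thm:YDsystem_for_YDmod}.
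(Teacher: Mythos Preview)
Your proof is correct and follows essentially the same approach as the paper: observe that all instances of \eqref{eqn:br_char} are trivially zero except the diagonal one on the component carrying the nonzero map, then reduce that remaining case to $\varepsilon_H$ (resp.\ $\varepsilon_{H^*}$) being a unital algebra morphism. The only quibble is terminological: you write ``all five off-diagonal instances'', but only three of the six pairs have $i<j$; the other two trivial cases are the diagonal pairs $(M,M)$ and $(H^*,H^*)$ (resp.\ $(H,H)$ and $(M,M)$), which vanish because the character itself is zero there, not because the pair is off-diagonal.
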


These braided characters are abusively denoted by $\varepsilon_H$ and $\varepsilon_{H^*}$ in what follows.

\begin{proof}
We prove the statement for $\varepsilon_H$ only, the one for $\varepsilon_{H^*}$ being similar.

Both sides of \eqref{eqn:br_char} are identically zero for the morphisms from the lemma, except for the case $i=j=1,$ corresponding to $H \otimes H.$ In this latter case, \eqref{eqn:br_char} becomes
$$ (\varepsilon_H \otimes \varepsilon_H) \circ (\mu_H \otimes \nu_H) = \varepsilon_H \otimes \varepsilon_H,$$
which follows from the fact that $\varepsilon_H$ is an algebra morphism (cf. the definition of bialgebra).
\end{proof}

In what follows, the letters $h_i$ always stay for elements of $H,$ $l_j$  -- for elements of $H^*;$ $a \in M,$ $b \in N^*;$ the pairing $\left\langle,\right\rangle$ is the evaluation; the multiplications $\mu_H$ and $\Delta_H^*$ on $H$ and $H^*$ respectively, as well as $H$- and $H^*$-actions, are denoted by $\cdot$ for simplicity. We also use higher-order Sweedler's notations of type
\begin{align*}
(\Delta_H \otimes \Id_H) \circ \Delta_H (h) &= h_{(1)} \otimes h_{(2)} \otimes h_{(3)}, & \forall \:  h \in H.
\end{align*}
Further, we omit the tensor product sign when it does not lead to confusion, writing for instance $h_1\ldots h_n \in H^{\otimes n}.$

Using these notations, one can write down explicite braided differentials for a YD module:

\begin{proposition}\label{thm:YDmod2}
Take a Yetter-Drinfel$'$d module $(M,\lambda,\delta)$ over a finite-dimensional $\k$-linear bialgebra $H.$ There is a bidegree $-1$ bidifferential on $T(H)\otimes M \otimes T(H^*)$ given by
\begin{align*}
{^{\varepsilon_{H^*}}}\! d&(h_1\ldots h_n\otimes a\otimes l_1\ldots l_m) =\\
&(-1)^{n+1}\left\langle l_{1(1)},a_{(1)}\right\rangle \left\langle l_{1(2)},h_{n(2)}\right\rangle \left\langle  l_{1(3)},h_{n-1(2)}\right\rangle \ldots \left\langle l_{1(n+1)},h_{1(2)}\right\rangle   h_{1(1)}\ldots h_{n(1)} \otimes a_{(0)}\otimes l_2\ldots l_m \\
&+\sum_{i=1}^{m-1} (-1)^{n+i+1}h_1\ldots h_n\otimes a\otimes l_1\ldots l_{i-1}(l_i\cdot l_{i+1})l_{i+2}\ldots l_m,\\
d{^{\varepsilon_{H}}}&(h_1\ldots h_n\otimes a\otimes l_1\ldots l_m) =\\
&(-1)^{n-1}\left\langle l_{1(1)},h_{n(m)}\right\rangle \left\langle l_{2(1)},h_{n(m-1)}\right\rangle \ldots \left\langle l_{m(1)},h_{n(1)}\right\rangle
 h_1\ldots h_{n-1} \otimes (h_{n(m+1)}\cdot a)\otimes l_{1(2)}\ldots l_{m(2)}\\
&+\sum_{i=1}^{n-1} (-1)^{i-1}h_1\ldots h_{i-1}(h_i \cdot h_{i+1})h_{i+2}\ldots h_n\otimes a\otimes l_1\ldots l_m.
\end{align*}
\end{proposition}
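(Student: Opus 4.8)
The plan is to recognize this proposition as a direct application of Theorem \ref{thm:BraidedSimplHomCat} to the braided system $(H,M,H^*)$ of Theorem \ref{thm:YDsystem_for_YDmod}, equipped with the two braided characters $\varepsilon_H$ and $\varepsilon_{H^*}$ validated in the lemma just proved. Theorem \ref{thm:BraidedSimplHomCat} then yields, for free, a bidegree $-1$ bidifferential $({^{\varepsilon_{H^*}}}\!d,\, d^{\varepsilon_H})$ on the whole space $T(\oV)^\rightarrow$ of ordered tensor products, so the bidifferential identities need not be re-proved. The only genuine task is thus to make the two families explicit on the subspace $T(H)\otimes M\otimes T(H^*)$ and to check that this subspace is stable. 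For stability I would first observe that both characters vanish on $M$ and that none of the braiding components $\sigma_{H,M}$, $\sigma_{M,H^*}$, $\sigma_{M,M}$ creates or destroys a copy of $M$; hence the number of $M$-tensorands is preserved by each differential, and the locus with exactly one copy of $M$ is invariant. (Finite-dimensionality of $H$, already assumed, is what makes $H^*$ and the system of Theorem \ref{thm:YDsystem_for_YDmod} available.)

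Next I would compute ${^{\varepsilon_{H^*}}}\!d$ term by term. In the defining sum $\sum_i (\varepsilon_{H^*})^1\circ(-\sigma)^1\circ\cdots\circ(-\sigma)^{i-1}$ the $i$-th summand drags the $i$-th tensorand to the front and applies $\varepsilon_{H^*}$ there; since $\varepsilon_{H^*}$ is supported on $H^*$, a summand survives only when the factor reaching the front lands in $H^*$. Using $\sigma_{H,H}=\sigma_{Ass}^r(H)$ and $\sigma_{H,M}$ one checks that dragging an $H$-factor or the $M$-factor to the front produces a leading tensorand in $H$ or in $M$, which $\varepsilon_{H^*}$ annihilates; so only $l_1,\dots,l_m$ contribute. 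Moving $l_1$ across $a$ (via $\sigma_{M,H^*}$) and across $h_n,\dots,h_1$ (via $\sigma_{H,H^*}$) accumulates the evaluations $\langle l_{1(1)},a_{(1)}\rangle\langle l_{1(2)},h_{n(2)}\rangle\cdots$, leaves $h_{1(1)}\cdots h_{n(1)}\otimes a_{(0)}$, and, after the leading factor is absorbed by the counit identity $(\varepsilon_{H^*}\otimes\Id)\circ\Delta_{H^*}=\Id$, reproduces the first displayed term. Moving $l_{i+1}$ ($1\le i\le m-1$) to the front instead merges it with $l_i$ through $\sigma_{H^*,H^*}=\sigma_{Ass}(H^*)=\nu_{H^*}\otimes\mu_{H^*}$; the unit $\one_{H^*}=\varepsilon_H$ thereby created is grouplike and slides trivially past $l_{i-1},\dots,l_1$, $a$ and the $h$'s (again by counitality), so the remaining tensorands are untouched and one is left with the multiplication term $l_1\cdots(l_i\cdot l_{i+1})\cdots l_m$. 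Counting the $(i-1)$ sign-carrying braidings then gives the signs $(-1)^{n+1}$ and $(-1)^{n+i+1}$.

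The computation of $d^{\varepsilon_H}$ is strictly dual: each summand pushes a factor to the rightmost slot and applies $\varepsilon_H$, supported on $H$, so only the $H$-factors survive. Pushing $h_n$ to the right across $a$ (via $\sigma_{H,M}$) and across $l_1,\dots,l_m$ (via $\sigma_{H,H^*}$) produces the action $h_{n(m+1)}\cdot a$ together with the evaluations $\langle l_{j(1)},h_{n(\cdot)}\rangle$ and the comultiplied $l_{j(2)}$, yielding the first displayed term after the counit absorbs the trailing factor. Pushing $h_i$ ($1\le i\le n-1$) to the right merges it with $h_{i+1}$ through $\sigma_{H,H}=\mu_H\otimes\nu_H$, giving the term $(h_i\cdot h_{i+1})$, while pushing $a$ or any $l_j$ to the right leaves a non-$H$ tensorand in the last slot and is killed by $\varepsilon_H$. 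Tracking the global $(-1)^{n-1}$ prefactor of $d^{\xi}$ together with one sign per elementary braiding recovers the remaining signs.

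The main obstacle is purely bookkeeping: organizing the iterated one-step braidings so that their successive single comultiplications collapse, via coassociativity of $\Delta_H$ and $\Delta_{H^*}$, into the higher Sweedler expressions $l_{1(1)}\otimes\cdots\otimes l_{1(n+1)}$ and $h_{n(1)}\otimes\cdots\otimes h_{n(m+1)}$ of the statement, and verifying that the grouplike units introduced by the associativity braidings evaporate by counitality without perturbing the other factors. Keeping the signs coherent is the other delicate point. I would carry out both by the graphical calculus of Section \ref{sec:hom} rather than symbolically, reading the signs off the intersection numbers of the diagrams as indicated after Theorem \ref{thm:BraidedSimplHomCat}.
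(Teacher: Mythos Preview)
Your proposal is correct and follows exactly the paper's approach: apply Theorem \ref{thm:BraidedSimplHomCat} to the braided system of Theorem \ref{thm:YDsystem_for_YDmod} with the characters $\varepsilon_H$ and $\varepsilon_{H^*}$, then restrict to $T(H)\otimes M\otimes T(H^*)$ using that both characters vanish on $M$. The paper's own proof consists of little more than this one sentence, whereas you have additionally unpacked the explicit formulas in detail; this extra computation is accurate but not required for the argument.
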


\begin{proof}
These are just the constructions from Theorem \ref{thm:BraidedSimplHomCat} applied to the braided system from Theorem \ref{thm:YDsystem_for_YDmod} and braided characters $\varepsilon_{H^*}$ and $\varepsilon_H.$ Note that we restricted the differentials from $\oplus_{n=0}^\infty T(\oV)_{n}$ to $T(H)\otimes M \otimes T(H^*),$ which is possible since both $\varepsilon_H$ and $\varepsilon_{H^*}$ are zero on $M.$
\end{proof}

\begin{remark}
In fact the differentials ${^{\varepsilon_{H^*}}}\! d$ and $d{^{\varepsilon_{H}}}$ define a \emph{double complex} structure on $T(H) \otimes M \otimes T(H^*),$ graduated by putting
$$\overline{\deg} \: (H^{\otimes n} \otimes M \otimes (H^*)^{\otimes m}) = (n,m).$$
\end{remark}

\medskip
In order to get rid of the classical contracting homotopies of type
\begin{align*}
\overline{h} \otimes a\otimes \overline{l} &\longmapsto 1_H \overline{h} \otimes a\otimes \overline{l},\\
\overline{h} \otimes a\otimes \overline{l} &\longmapsto \overline{h} \otimes a\otimes \overline{l}1_{H^*},
\end{align*}
we now try to ``cycle'' this bidifferential, in the spirit of Hochschild homology for algebras or Gerstenhaber-Schack homology for bialgebras.

Concretely, take a {YD module} $(M,\lambda_M,\delta_M)$ and a {finite-dimensional YD module} $(N,\lambda_N,\delta_N)$ over $H.$ Our aim is to endow the graded vector space 
$$T(H) \otimes M \otimes T(H^*) \otimes N^*$$ 
with a bidifferential extending that from Proposition \ref{thm:YDmod2}. 

First, note that the ``rainbow'' duality between $H$ and $H^*$ (cf. \eqref{eqn:Delta_dual} or Fig. \ref{pic:DualViaRainbow}) graphically corresponds to an angle $\pi$ rotation. Since the notions of bialgebra and YD module are centrally symmetric (cf. Remark \ref{rmk:YD_br_alter}), one gets the following useful property:

\begin{lemma}\label{thm:YD_dual}
Take a {finite-dimensional YD module} $(N,\lambda_N,\delta_N)$ over a finite dimensional bialgebra $H.$ Then $(N^*,\delta_N^*,\lambda_N^*)$ is a YD module over $H^*.$
\end{lemma}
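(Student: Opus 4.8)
The plan is to read the statement through the \emph{central symmetry} of the YD module notion, already exploited in Remark~\ref{rmk:YD_br_alter}, and to realize this symmetry concretely as linear dualization in the finite-dimensional setting; in particular no antipode on $H$ will be needed.

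First I would fix the types. Since both $H$ and $N$ are finite-dimensional, the ``rainbow'' duality provides natural identifications $(A \otimes B)^* \cong B^* \otimes A^*$, under which the transpose of the coaction $\delta_N \colon N \to N \otimes H$ becomes a morphism $\delta_N^* \colon H^* \otimes N^* \to N^*$ (a candidate left $H^*$-action), and the transpose of the action $\lambda_N \colon H \otimes N \to N$ becomes a morphism $\lambda_N^* \colon N^* \to N^* \otimes H^*$ (a candidate right $H^*$-coaction). Here $H^*$ carries its usual dual bialgebra structure $(\mu_{H^*}, \nu_{H^*}, \Delta_{H^*}, \varepsilon_{H^*}) = ((\Delta_H)^*, (\varepsilon_H)^*, (\mu_H)^*, (\nu_H)^*)$, cf. \eqref{eqn:Delta_dual}--\eqref{eqn:eps_dual}. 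Thus the proposed data $(N^*, \delta_N^*, \lambda_N^*)$ have exactly the right shape to define a left-right YD module over $H^*$.

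The key observation, illustrated already in Fig.~\ref{pic:DualViaRainbow}, is that in the finite-dimensional setting the rainbow duality realizes the linear transpose of a morphism as the rotation of its diagram by an angle $\pi$, with evaluation and coevaluation caps closing up the rotated strands. Under this point reflection, $H$-strands are exchanged with $H^*$-strands and $N$-strands with $N^*$-strands, while the structure morphisms transform as $\mu_H \mapsto \Delta_{H^*}$, $\Delta_H \mapsto \mu_{H^*}$, $\nu_H \mapsto \varepsilon_{H^*}$, $\varepsilon_H \mapsto \nu_{H^*}$, $\lambda_N \mapsto \lambda_N^*$ and $\delta_N \mapsto \delta_N^*$; the symmetric braiding $c$ is sent to itself, being stable under central symmetry (Remark~\ref{rmk:YD_br_alter}). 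In particular a left module axiom rotates into a right comodule axiom and vice versa, so the module axioms \eqref{eqn:Amod}--\eqref{eqn:Amod'} for $\lambda_N$ become precisely the right $H^*$-comodule axioms for $\lambda_N^*$, and the right $H$-comodule axioms for $\delta_N$ become the left $H^*$-module axioms for $\delta_N^*$.

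It then remains to rotate the YD compatibility condition itself. Applying the point reflection to the equality of diagrams defining \eqref{eqn:YD} for $(N, \lambda_N, \delta_N)$ over $H$, and using that the whole set of diagrams defining a left-right YD module is stable under central symmetry (Remark~\ref{rmk:YD_br_alter}), one obtains exactly the equality of diagrams expressing \eqref{eqn:YD} for $(N^*, \delta_N^*, \lambda_N^*)$ over $H^*$. Together with the previous paragraph, this establishes all the defining axioms of a YD module over $H^*$ for $(N^*, \delta_N^*, \lambda_N^*)$. The only genuine point requiring care is this last step: one must confirm that the rotation sends the YD compatibility diagram to the YD compatibility diagram for the dualized data rather than to some unrelated configuration -- i.e. the self-symmetry of \eqref{eqn:YD} under an angle $\pi$ rotation -- together with the bookkeeping of the order-reversal of tensor factors induced by the rainbow duality. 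Both are exactly what Remark~\ref{rmk:YD_br_alter} supplies, which is why the argument goes through for an arbitrary bialgebra $H$.
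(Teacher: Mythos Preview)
Your proposal is correct and follows essentially the same approach as the paper: the paper's entire argument is the sentence preceding the lemma, observing that rainbow duality corresponds graphically to an angle~$\pi$ rotation and that the notion of YD module is centrally symmetric (Remark~\ref{rmk:YD_br_alter}). You have spelled out this argument in more detail --- fixing the types of $\delta_N^*$ and $\lambda_N^*$, tracking how each structure morphism transforms under the rotation, and noting explicitly that the module/comodule axioms and the YD compatibility diagram are exchanged with their $H^*$-counterparts --- but the underlying idea is identical.
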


Further, inspired by the formulas from Proposition \ref{thm:YDmod2}, consider the following morphisms from $ H^{\otimes n}\otimes M \otimes (H^*)^{\otimes m} \otimes N^*$ to $ H^{\otimes n-1}\otimes M \otimes (H^*)^{\otimes m} \otimes N^*$ or $ H^{\otimes n}\otimes M \otimes (H^*)^{\otimes m-1} \otimes N^*$:
\begin{align*}
^{H^*}\!\pi &(h_1\ldots h_n\otimes a\otimes l_1\ldots l_m \otimes b) =\\
&\left\langle l_{1(1)},a_{(1)}\right\rangle \left\langle l_{1(2)},h_{n(2)}\right\rangle \left\langle  l_{1(3)},h_{n-1(2)}\right\rangle \ldots \left\langle l_{1(n+1)},h_{1(2)}\right\rangle h_{1(1)}\ldots h_{n(1)} \otimes a_{(0)}\otimes l_2\ldots l_m\otimes  b =\\
&\left\langle l_{1}, h_{1(2)}  \cdots h_{n-1 (2)} \cdot h_{n(2)} \cdot a_{(1)}\right\rangle 
h_{1(1)} \ldots h_{n(1)} \otimes a_{(0)} \otimes l_2\ldots l_m\otimes  b, \\
\pi^{H^*} &(h_1\ldots h_n\otimes a\otimes l_1\ldots l_m \otimes b) =\\
&\left\langle l_{m(1)}, h_{1(1)} \cdot h_{2(1)} \cdot \cdots \cdot h_{n(1)} \right\rangle 
h_{1(2)} \ldots h_{n(2)} \otimes a \otimes l_1\ldots l_{m-1} \otimes l_{m(2)} \cdot b, \\
 \pi^{H}&(h_1\ldots h_n\otimes a\otimes l_1\ldots l_m \otimes b) =\\
&\left\langle l_{1(1)},h_{n(m)}\right\rangle \left\langle l_{2(1)},h_{n(m-1)}\right\rangle \ldots \left\langle l_{m(1)},h_{n(1)}\right\rangle h_1\ldots h_{n-1} \otimes (h_{n(m+1)}\cdot a)\otimes l_{1(2)}\ldots l_{m(2)} \otimes b=\\
&\left\langle l_{1(1)} \cdot l_{2(1)} \cdots l_{m(1)},h_{n(1)} \right\rangle h_1\ldots h_{n-1} \otimes (h_{n(2)}\cdot a)\otimes l_{1(2)}\ldots l_{m(2)} \otimes b,\\
 ^{H}\!\pi &(h_1\ldots h_n\otimes a\otimes l_1\ldots l_m \otimes b) =\\
&\left\langle l_{1(2)} \cdot l_{2(2)} \cdots l_{m(2)} \cdot b_{(1)},h_{1}\right\rangle h_2\ldots h_{n} \otimes a\otimes l_{1(1)}\ldots l_{m(1)} \otimes b_{(0)}.\\
\end{align*}
These applications are presented on Fig. \ref{pic:YD_hom_components}. We use notation
\begin{align*}
\Delta^t &:= \Delta_H^t := (\Delta_H \otimes \Id_{H^{\otimes t-1}}) \circ \cdots \circ (\Delta_H \otimes \Id_{H}) \circ \Delta_H:H \rightarrow H^{\otimes \: t+1} & \forall \: t \in \NN, 
\end{align*}
and similarly for $(\mu^*)^t.$

\begin{center}
\begin{tikzpicture}[scale=0.7]
 \draw [dashed] (0,0) --  (0,4);
 \draw [dashed] (1,0) -- (1,4);
 \draw [dashed] (2,0) -- (2,4);
 \draw  (-3,0) -- (-3,4);
 \draw  (-4,0) -- (-4,4);
 \draw [thick, colori] (3,0) -- (3,4);
 \draw [thick, colorM] (-1,0) -- (-1,4);
 \draw (-0.5,1.5) -- (-2,0.5) -- (-2,0);
 \draw (-0.5,2.5) -- (-2,0.5) -- (-0.5,3.5);
 \draw (-2,0.5) -- (-1,0.8);
 \draw [dashed] (0,0.5) -- (-0.5,1.5);
 \draw [dashed] (1,0.5) -- (-0.5,2.5);
 \draw [dashed] (2,0.5) -- (-0.5,3.5);
 \node at (-5,2)  {$\pi^{H} =$};
 \node at (0,0.5) [right] {$\mu^*$};
 \node at (1,0.5) [right] {$\mu^*$};
 \node at (2,0.5) [right] {$\mu^*$};
 \node at (-1,0.8) [right] {$\lambda_M$};
 \node at (-2.3,0.5) [above] {$\Delta^{m}$};
 \node at (-0.5,1.5) [above] {$ev$};
 \node at (-0.5,2.5) [above] {$ev$};
 \node at (-0.5,3.5) [above] {$ev$};
 \fill [teal] (0,0.5) circle (0.1);
 \fill [teal] (1,0.5) circle (0.1);
 \fill [teal] (2,0.5) circle (0.1);
 \fill [teal] (-2,0.5) circle (0.1);
 \fill [teal] (-1,0.8) circle (0.15);
 \fill (-0.5,1.5) circle (0.1);
 \fill (-0.5,2.5) circle (0.1);
 \fill (-0.5,3.5) circle (0.1);
 \node at (3.5,0) {,};
 \node at (-3,0) [below] {$H^{\otimes n}$};
 \node at (1,0) [below] {$(H^*)^{\otimes m}$};
 \node at (-1,0) [below] {$M$};
 \node at (3,0) [below] {$N^*$};
\end{tikzpicture}
\begin{tikzpicture}[scale=0.7]
 \node at (-1,2)  {$\qquad ^{H^*}\!\pi =$};
 \draw (1,0) -- (1,4);
 \draw (2,0) -- (2,4);
 \draw (3,0) -- (3,4);
 \draw [thick, colorM] (4,0) -- (4,4);
 \draw [dashed] (6,0) -- (6,4);
 \draw [dashed] (7,0) -- (7,4);
 \draw [thick, colori] (8,0) -- (8,4);
 \draw[dashed] (4.5,1.5) -- (5,0.5) -- (5,0);
 \draw[dashed] (4.5,2.5) -- (5,0.5) -- (4.5,3.5);
 \draw[dashed] (4.5,0.8) -- (5,0.5);
 \draw (1,0.5) -- (4.5,3.5);
 \draw (2,0.5) -- (4.5,2.5);
 \draw (3,0.5) -- (4.5,1.5);
 \draw (4,0.5) -- (4.5,0.8);
 \node at (1,0.5) [left] {$\Delta$};
 \node at (2,0.5) [left] {$\Delta$};
 \node at (3,0.5) [left] {$\Delta$};
 \node at (4,0.5) [left] {$\delta_M$};
 \node at (4.8,0.5) [above right] {$(\mu^*)^{n}$};
 \fill [teal] (1,0.5) circle (0.1);
 \fill [teal] (2,0.5) circle (0.1);
 \fill [teal] (3,0.5) circle (0.1);
 \fill [teal] (4,0.5) circle (0.15);
 \fill [teal] (5,0.5) circle (0.1);
 \fill (4.5,0.8) circle (0.1);
 \fill (4.5,1.5) circle (0.1);
 \fill (4.5,2.5) circle (0.1);
 \fill (4.5,3.5) circle (0.1);
 \node at (8.5,0) {,};
 \node at (8,0) [below] {$N^*$};
 \node at (6,0) [below] {$(H^*)^{\otimes m}$};
 \node at (4,0) [below] {$M$};
 \node at (2,0) [below] {$H^{\otimes n}$};
\end{tikzpicture}

\begin{tikzpicture}[scale=0.7]
 \node at (-2,2)  {$^{H}\!\pi =$};
 \draw [dashed] (1,-0.5) -- (1,4);
 \draw [dashed] (2,-0.5) -- (2,4);
 \draw [dashed] (3,-0.5) -- (3,4);
 \draw [thick, colori] (4,-0.5) -- (4,4);
 \draw (-1,-0.5) -- (-1,4);
 \draw (-0.5,-0.5) -- (-0.5,4);
 \draw [thick, colorM] (0,-0.5) -- (0,4);
 \draw [rounded corners] (5,0.5) -- (4.5,0.3) -- (-1,-0.3) -- (-1.5,-0.5);
 \draw (4.5,1.5) -- (5,0.5);
 \draw (4.5,2.5) -- (5,0.5) -- (4.5,3.5);
 \draw (4.5,0.8) -- (5,0.5);
 \draw [dashed] (1,0.5) -- (4.5,3.5);
 \draw [dashed] (2,0.5) -- (4.5,2.5);
 \draw [dashed] (3,0.5) -- (4.5,1.5);
 \draw [dashed] (4,0.5) -- (4.5,0.8);
 \node at (1,0.5) [left] {$\mu^*$};
 \node at (2,0.5) [left] {$\mu^*$};
 \node at (3,0.5) [left] {$\mu^*$};
 \node at (4.2,0.7) [left] {$\delta_{N^*}$};
 \node at (4.8,0.5) [above right] {$\Delta^{m}$};
 \fill [teal] (1,0.5) circle (0.1);
 \fill [teal] (2,0.5) circle (0.1);
 \fill [teal] (3,0.5) circle (0.1);
 \fill [teal] (4,0.5) circle (0.15);
 \fill [teal] (5,0.5) circle (0.1);
 \fill (4.5,0.8) circle (0.1);
 \fill (4.5,1.5) circle (0.1);
 \fill (4.5,2.5) circle (0.1);
 \fill (4.5,3.5) circle (0.1);
 \node at (5,-0.5) {,};
 \node at (4,-0.5) [below] {$N^*$};
 \node at (2,-0.5) [below] {$(H^*)^{\otimes m}$};
 \node at (0,-0.5) [below] {$M$};
 \node at (-1,-0.5) [below] {$H^{\otimes n}$};
\end{tikzpicture} 
\begin{tikzpicture}[scale=0.7]
 \draw (0,-0.5) --  (0,4);
 \draw (1,-0.5) -- (1,4);
 \draw (2,-0.5) -- (2,4);
 \draw [dashed] (5,-0.5) -- (5,4);
 \draw [dashed] (4,-0.5) -- (4,4);
 \draw [thick, colorM] (3,-0.5) -- (3,4);
 \draw [thick, colori] (7,-0.5) -- (7,4);
 \draw [rounded corners, dashed] (-1,1) -- (-0.8,0.4) -- (6,0) -- (6,-0.5);
 \draw [dashed] (7,1) -- (6,0);
 \draw [dashed] (-0.5,1.5) -- (-1,1);
 \draw [dashed] (-0.5,2.5) -- (-1,1) -- (-0.5,3.5);
 \draw (0,0.5) -- (-0.5,1.5);
 \draw (1,0.5) -- (-0.5,2.5);
 \draw (2,0.5) -- (-0.5,3.5);
 \node at (-3,2)  {$\qquad\pi^{H^*} =$};
 \node at (0,0.5) [right] {$\Delta$};
 \node at (1,0.5) [right] {$\Delta$};
 \node at (2,0.5) [right] {$\Delta$};
 \node at (7,1) [above left] {$\lambda_{N^*}$};
 \node at (-1.5,1) [below] {$(\mu^*)^{n-1}$};
 \node at (6,0) [above] {$\mu^*$};
 \fill [teal] (0,0.5) circle (0.1);
 \fill [teal] (1,0.5) circle (0.1);
 \fill [teal] (2,0.5) circle (0.1);
 \fill [teal] (-1,1) circle (0.1);
 \fill [teal] (6,0) circle (0.1);
 \fill [teal] (7,1) circle (0.15);
 \fill (-0.5,1.5) circle (0.1);
 \fill (-0.5,2.5) circle (0.1);
 \fill (-0.5,3.5) circle (0.1);
 \node at (7.5,-0.5) {.};
 \node at (1,-0.5) [below] {$H^{\otimes n}$};
 \node at (5,-0.5) [below] {$(H^*)^{\otimes m}$};
 \node at (3,-0.5) [below] {$M$};
 \node at (7,-0.5) [below] {$N^*$};
\end{tikzpicture}
  
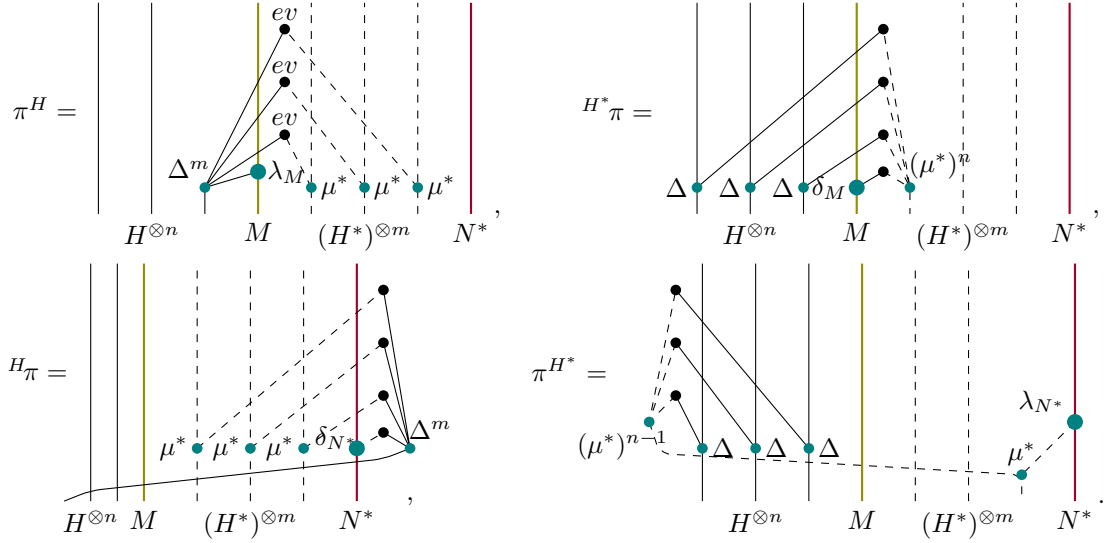
\captionof{figure}{Components of YD module homology}\label{pic:YD_hom_components}
\end{center}

These morphisms can be interpreted in terms of ``braided'' \emph{adjoint actions}, as it was done for the bialgebra case in \cite{Lebed2}.

At last, recall the classical \emph{bar and cobar differentials}: 
\begin{align*}
d_{bar}&(h_1\ldots h_n \otimes a \otimes l_1\ldots l_m \otimes b) = \sum_{i=1}^{n-1} (-1)^{i}h_1\ldots h_{i-1}(h_i \cdot h_{i+1})h_{i+2}\ldots h_n \otimes a \otimes l_1\ldots l_m \otimes b,\\
d_{cob}&(h_1\ldots h_n \otimes a \otimes l_1\ldots l_m \otimes b) = \sum_{i=1}^{m-1} (-1)^{i}h_1\ldots h_n \otimes a \otimes l_1\ldots l_{i-1}(l_i\cdot l_{i+1})l_{i+2}\ldots l_m \otimes b.
\end{align*}

Everything is now ready for presenting an enhanced version of Proposition \ref{thm:YDmod2}:

\begin{theorem}\label{thm:YDHomGen}
Given a {YD module} $M$ and a {finite-dimensional YD module} $N$ over a {finite-dimensional} {bialgebra} $H$ in $\Vect,$ one has four bidegree $-1$ bidifferentials on $T(H) \otimes M \otimes T(H^*) \otimes N^*,$ presented in the lines of Table \ref{tab:YDModBidiff}.
\begin{center}
\begin{tabular}{|c||c|c|}
\hline 
1.&$ d_{bar}$&$(-1)^{n}d_{cob}$ \\
\hline 
2. &$d_{bar}+(-1)^{n}\pi^{H}$ &$(-1)^{n}d_{cob}+(-1)^{n}({^{H^*}}\!\pi)$ \\
\hline 
3.& $d_{bar}+{^{H}}\!\pi$& $(-1)^{n}d_{cob}+(-1)^{n+m}{\pi^{H^*}}$\\
\hline 
4.&$d_{bar}+(-1)^{n}\pi^{H}+{^{H}}\!\pi$ & $(-1)^{n}d_{cob}+(-1)^{n}({^{H^*}}\!\pi)+(-1)^{n+m}{\pi^{H^*}}$\\
\hline 
\end{tabular}
   \captionof{table}{Bidifferential structures on $T(H) \otimes M \otimes T(H^*) \otimes N^*$}\label{tab:YDModBidiff}
\end{center}
The signs $(-1)^{n}$ etc. here are those one chooses on the component $ H^{\otimes n} \otimes M \otimes (H^*)^{\otimes m} \otimes N^*$ of $T(H) \otimes M \otimes T(H^*) \otimes N^*.$
\end{theorem}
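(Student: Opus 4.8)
The plan is to deduce all four bidifferentials from the multi-braided (co)homology of Theorem \ref{thm:BraidedSimplHomCat}, so that the three defining identities of a bidegree $-1$ bidifferential are never checked by hand but inherited from the general machinery; the only genuinely new input is the coefficient object $N^*$, whose role is governed by Lemma \ref{thm:YD_dual}. First I would recover Proposition \ref{thm:YDmod2} in braided terms: the pair $(d{^{\varepsilon_{H}}},\,{^{\varepsilon_{H^*}}}\!d)$ is exactly the multi-braided bidifferential produced by Theorem \ref{thm:BraidedSimplHomCat} for the rank $3$ braided system $(H,M,H^*)$ of Theorem \ref{thm:YDsystem_for_YDmod} and the two braided characters $\varepsilon_H$, $\varepsilon_{H^*}$. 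Feeding the explicit components of the braiding (Fig. \ref{pic:sigmaHMH*}) into the formulas for ${}^{\zeta}\!d$ and $d^{\xi}$, the ``internal'' terms — those produced by sliding an $H$-strand over another $H$-strand through $\sigma_{H,H}=\sigma_{Ass}^r(H)$, resp. an $H^*$-strand over an $H^*$-strand through $\sigma_{H^*,H^*}=\sigma_{Ass}(H^*)$ — collapse to the bar and cobar differentials $d_{bar}$ and $(-1)^n d_{cob}$, while the single ``endpoint'' term in each differential is the place where the migrating strand meets $M$: in $d{^{\varepsilon_{H}}}$ the rightmost $H$ is absorbed into $M$ through $\sigma_{H,M}$, giving $\pi^H$, and in ${^{\varepsilon_{H^*}}}\!d$ the leftmost $H^*$ is absorbed through $\sigma_{M,H^*}$, giving ${^{H^*}}\!\pi$. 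Tensoring this rank $3$ bidifferential with the inert factor $N^*$ yields line $2$ of Table \ref{tab:YDModBidiff}; dropping the two endpoint terms yields line $1$, the plain double complex $(d_{bar},(-1)^n d_{cob})$ whose bidifferential identities are just the associativity of $H$ and of $H^*$.

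The essential new point is the interpretation of the remaining two faces ${^{H}}\!\pi$ and $\pi^{H^*}$, which I would treat as the ``cycled'' endpoint maps obtained by closing the complex with the coefficients $N^*$, in the spirit of passing from the bar complex to Hochschild homology, or from the cobar complex to Gerstenhaber--Schack homology. By Lemma \ref{thm:YD_dual} the object $N^*$ is a Yetter--Drinfel$'$d module over $H^*$; concretely, being finite dimensional, it carries an $H^*$-action $\lambda_{N^*}=\delta_N^*$ and an $H^*$-coaction $\delta_{N^*}=\lambda_N^*$, and these are precisely what appear in $\pi^{H^*}$ (the rightmost $H^*$ acts on $N^*$) and in ${^{H}}\!\pi$ (the leftmost $H$ is paired, through the rainbow duality of Fig. \ref{pic:DualViaRainbow}, against the $H^*$-coaction of $N^*$), cf. Fig. \ref{pic:YD_hom_components}. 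The clean way to produce the bidifferential $(d_{bar}+{^{H}}\!\pi,\ (-1)^n d_{cob}+(-1)^{n+m}\pi^{H^*})$ of line $3$ is to run Theorem \ref{thm:BraidedSimplHomCat} on the \emph{companion} braided system $(H^*,N^*,(H^*)^*)=(H^*,N^*,H)$ supplied by Theorem \ref{thm:YDsystem_for_YDmod} for the bialgebra $H^*$ and its YD module $N^*$, and to transport the resulting two differentials to the ordering $H^{\otimes n}\otimes M\otimes (H^*)^{\otimes m}\otimes N^*$ by the central (rainbow) symmetry of Remark \ref{rmk:YD_br_alter}, along which the notions of bialgebra and YD module are stable. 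This simultaneously shows that ${^{H}}\!\pi$ and $\pi^{H^*}$ are legitimate braided face maps and that each, combined with the matching bar/cobar differential, squares to zero.

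Finally, line $4$ is the sum $D=d_{bar}+(-1)^n\pi^H+{^{H}}\!\pi$ and $D'=(-1)^n d_{cob}+(-1)^n({^{H^*}}\!\pi)+(-1)^{n+m}\pi^{H^*}$, and by the Corollary following Theorem \ref{thm:BraidedSimplHomCat} (a $\ZZ$-linear combination of multi-braided differentials is again a differential) the identities $D^2=0$ and $D'^2=0$ reduce to the cross-relations between the $M$-side faces, coming from $(H,M,H^*)$, and the $N^*$-side faces, coming from $(H^*,N^*,H)$. I expect this to be the main obstacle: one must show that the face maps sitting at opposite ends of the two algebra strings slide past one another, i.e. relations of the shape ${^{H}}\!\pi\circ\pi^H=-\pi^H\circ{^{H}}\!\pi$ and ${^{H}}\!\pi\circ{^{H^*}}\!\pi=\pm{^{H^*}}\!\pi\circ{^{H}}\!\pi$, together with $DD'+D'D=0$. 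Graphically these are exactly the moves already used in the proof of Theorem \ref{thm:YD_braided} — the naturality and symmetry \eqref{eqn:BrSymm} of $c$, the YD compatibility \eqref{eqn:YD}, and the (co)module axioms — so no new computation is needed beyond sliding action and coaction nodes through the symmetric braiding; the delicate part is the consistent bookkeeping of the signs $(-1)^n$ and $(-1)^{n+m}$ across the two gradings, which is precisely what pins down the sign pattern displayed in Table \ref{tab:YDModBidiff}.
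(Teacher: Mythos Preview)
Your treatment of lines 1--3 matches the paper's: line 2 is Proposition \ref{thm:YDmod2} tensored with $\Id_{N^*}$, and line 3 is its dual via Lemma \ref{thm:YD_dual}, transported along the flip. The gap is in line 4. First, the Corollary to Theorem \ref{thm:BraidedSimplHomCat} only asserts that linear combinations of ${}^{\zeta}\!d$ and $d^{\xi}$ for \emph{one} braided system are differentials; it says nothing about mixing differentials coming from the two distinct systems $(H,M,H^*)$ and $(H^*,N^*,H)$, so it does not reduce the problem. You do acknowledge that cross-relations remain, but you mis-state them: after absorbing the sign twists $(-1)^n$, $(-1)^{n+m}$, what is actually needed is that the bare maps $\pi^H$, ${^{H}}\!\pi$, ${^{H^*}}\!\pi$, $\pi^{H^*}$ pairwise \emph{commute}, not anti-commute --- the anti-commutation is manufactured by the signs, since each $\pi$ shifts one of the degrees by $-1$.

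More seriously, the claim that these commutations are ``exactly the moves already used in the proof of Theorem \ref{thm:YD_braided}'' is too optimistic. The pairs $(\pi^H,{^{H}}\!\pi)$ and $({^{H^*}}\!\pi,\pi^{H^*})$ do reduce to coassociativity of $\mu^*$ resp.\ $\Delta$, but the pair $(\pi^H,\pi^{H^*})$ --- which you do not list and which is needed for $DD'+D'D=0$ --- is genuinely delicate: both maps touch \emph{every} copy of $H$ (via $\Delta$) and \emph{every} copy of $H^*$ (via $\mu^*$), so they do not act on disjoint tensor factors, and no naturality-of-$c$ slide suffices. The paper handles this by introducing reduced versions $\widehat{\pi}^H$, $\widehat{\pi}^{H^*}$ that ignore the outermost $H^*$ resp.\ $H$, factoring $\pi^H=\widehat{\pi}^H\circ\theta$ and $\pi^{H^*}=\widehat{\pi}^{H^*}\circ\theta$ through a common single-evaluation map $\theta$, and then using that $\pi^H\circ\widehat{\pi}^{H^*}=\pi^{H^*}\circ\widehat{\pi}^H$ because \emph{these} do act on disjoint components. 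The paper also organises the passage from lines 1--3 to line 4 via a small abstract lemma: if $(a+b)(d+e)=(a+c)(d+f)=ad=bf+ce=0$ in a distributive setting, then $(a+b+c)(d+e+f)=0$; this pinpoints exactly which commutators must be checked.
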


Substituting the graded vector space $T(H) \otimes M \otimes T(H^*) \otimes N^*$ we work in with its alternative version  $\Hom_\k(N \otimes T(H), T(H) \otimes M)$ (as it was done for example in \cite{MW}), we obtain (the dual of a mirror version of) the \emph{deformation cohomology for YD modules}, defined by F. Panaite and D. {\c{S}}tefan in \cite{Panaite}. We have thus developed a conceptual framework for this cohomology theory, replacing case by case verifications (for instance, when proving that one has indeed a bidifferential) with a structure study, facilitated by graphical tools.

\subsection{Proof of Theorem \ref{thm:YDHomGen}}

Morphisms $d_{bar}$ and $d_{cob}$ are well known to be differentials; this can be easily verified by direct calculations, or using the rank $1$ braided systems $(H, \sigma_{Ass}(H))$ and $(H^*, \sigma_{Ass}(H^*))$, cf. \cite{Lebed1}. Further, they affect different components of $T(H) \otimes M \otimes T(H^*) \otimes N^*$ (namely, $T(H)$ and $T(H^*)$) and thus commute. The sign $(-1)^{n}$ guarantees the anti-commutation. This proves that the first line of the table contains a bidegree $-1$ bidifferential.

The assertion for the second line follows from Proposition \ref{thm:YDmod2}, since
\begin{align*}
{^{\varepsilon_{H^*}}}\! d \otimes \Id_{N^*} &= (-1)^{n+1} d_{cob} + (-1)^{n+1} ({^{H^*}}\!\pi),\\
d{^{\varepsilon_{H}}}  \otimes \Id_{N^*} &=  - d_{bar} + (-1)^{n-1}\pi^{H}.
\end{align*}

To prove the statement for the third line, note that the dual (in the sense of Lemma \ref{thm:YD_dual}) version of Proposition \ref{thm:YDmod2} gives a bidifferential $({^{\varepsilon_{H}}}\!d,d{^{\varepsilon_{H^*}}})$ on $T(H^*) \otimes N^* \otimes T(H),$ and hence (by tensoring with $\Id_M$) on  $T(H^*) \otimes N^* \otimes T(H) \otimes M.$ Identifying the latter space with $T(H) \otimes M \otimes T(H^*) \otimes N^*$ via the flip \eqref{eqn:flip} and keeping notation $({^{\varepsilon_{H}}}\!d,d{^{\varepsilon_{H^*}}})$ for the bidifferential induced on this new space, one calculates
\begin{align*}
{^{\varepsilon_{H}}}\!d &= (-1)^{m+1} d_{bar} + (-1)^{m+1} ({}^H\!\pi),\\
d{^{\varepsilon_{H^*}}}  &= - d_{cob} + (-1)^{m-1} \pi^{H^*}.
\end{align*}
Multiplying ${^{\varepsilon_{H}}}\! d$ by $(-1)^{m+1}$ and $d{^{\varepsilon_{H^*}}}$ by $(-1)^{n+1},$ and checking that this does not break the anti-commutation, one recovers the third line of the table.

We start the proof for the fourth line with a general assertion:

\begin{lemma}\label{thm:AlgTrick}
Take an abelian group $(S,+,0,x\mapsto -x)$ endowed with an operation $\cdot$ distributive with respect to $+.$ Then, for any $a,b,c,d,e,f \in S$ such that
$$(a+b)\cdot (d+e) = (a+c)\cdot (d+f) = a \cdot d = b \cdot f + c \cdot e = 0,$$
 one has 
$$(a+b+c)\cdot (d+e+f) = 0.$$
\end{lemma}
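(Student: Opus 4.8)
The plan is to prove the identity by a brute-force expansion via distributivity, and then to reorganize the resulting terms so that each group matches one of the four vanishing hypotheses. Since $\cdot$ is only assumed distributive over $+$ (with no commutativity or associativity of $\cdot$, and no further structure on $S$ beyond that of an abelian group under $+$), the first step is to record that two-sided distributivity expands the target into a sum of nine terms,
\[
(a+b+c)\cdot (d+e+f) = \sum_{x\in\{a,b,c\}}\;\sum_{y\in\{d,e,f\}} x\cdot y .
\]

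Next I would split these nine terms into three packets, each of which I can show to vanish separately. The single term $a\cdot d$ vanishes on its own by the third hypothesis $a\cdot d = 0$. The pair $b\cdot f + c\cdot e$ vanishes by the fourth hypothesis. This leaves the six ``mixed'' terms $a\cdot e + a\cdot f + b\cdot d + b\cdot e + c\cdot d + c\cdot f$, which together account for exactly the remaining entries of the double sum.

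To handle the six mixed terms I would invoke the first two hypotheses, each expanded by distributivity:
\[
(a+b)\cdot (d+e) = a\cdot d + a\cdot e + b\cdot d + b\cdot e = 0,
\qquad
(a+c)\cdot (d+f) = a\cdot d + a\cdot f + c\cdot d + c\cdot f = 0 .
\]
Adding these two vanishing expressions produces precisely the six mixed terms together with a doubled occurrence $a\cdot d + a\cdot d$. The only point requiring care is that this doubled term is neutralized exactly because $a\cdot d = 0$ by the third hypothesis; hence the six mixed terms sum to zero. Combining the three vanishing packets then yields $(a+b+c)\cdot (d+e+f) = 0$.

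The main (and essentially only) obstacle here is bookkeeping: one must check that the nine expanded terms partition exactly into the three packets, with no term omitted or double-counted, and that each hypothesis is used in its proper place. The slightly non-obvious role is played by $a\cdot d = 0$, whose purpose is to cancel the double count that appears when the two ``bilinear'' hypotheses are summed. No deeper structural input is needed, so the argument is purely a matter of careful distributive rearrangement.
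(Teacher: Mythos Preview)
Your proof is correct and follows essentially the same approach as the paper: expand $(a+b+c)\cdot(d+e+f)$ by distributivity and regroup the resulting terms so that each packet matches one of the four hypotheses, with $a\cdot d=0$ absorbing the double count. The paper condenses this into the single identity $(a+b+c)\cdot(d+e+f) = (a+b)\cdot(d+e) + (a+c)\cdot(d+f) - a\cdot d + (b\cdot f + c\cdot e)$, but the underlying bookkeeping is identical to yours.
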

\begin{proof}
$$(a+b+c)\cdot (d+e+f) = (a+b)\cdot (d+e) + (a+c)\cdot (d+f) - a \cdot d +( b \cdot f + c \cdot e).$$ \qedhere
\end{proof}
Now take $S=\End_\k(T(H) \otimes M \otimes T(H^*) \otimes N^*)$ with the usual addition and the operation composition $\varphi \circ \psi$ (for proving that the two morphisms from the fourth line of our table are differentials), or the operation $\varphi \diamond \psi:= \varphi \circ \psi + \psi \circ \varphi$ (for proving that the two morphisms anti-commute). The information from the first three lines and Lemma \ref{thm:AdjActionsForYDModCommute} allow to apply Lemma \ref{thm:AlgTrick} to sextuples $(a,b,c,a,b,c)$ and $(d,e,f,d,e,f)$ (for the operation $\circ$), and $(a,b,c,d,e,f)$ (for the operation $\diamond$), where
\begin{align*}
a &:= d_{bar}, & d &:= (-1)^{n}d_{cob},\\
b &:= (-1)^{n}\pi^{H}, & e &:= (-1)^{n}({^{H^*}}\!\pi)\\
c &:= {^{H}}\!\pi, & f &:= (-1)^{n+m}{\pi^{H^*}}.
\end{align*}
One thus gets the fourth line of the table.

\begin{lemma}\label{thm:AdjActionsForYDModCommute}
The endomorphisms ${^{H^*}}\!\pi,\pi^{H^*},\pi^{H}$ and ${^{H}}\!\pi$ of $T(H)\otimes M \otimes T(H^*) \otimes N^*$ pairwise commute.
\end{lemma}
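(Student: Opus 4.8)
The plan is to treat the six unordered pairs of operators separately, exploiting both the graphical calculus and the rainbow/central-symmetry duality in order to keep the number of genuinely different computations small. First I would dispose of the degenerate components: each of the four morphisms lowers the degree in exactly one of the two factors $T(H)$ or $T(H^*)$ and is understood to be zero on the components where that factor has degree $0$. Hence on any component where one of the two composites fails to be defined the other is forced to vanish as well, so commutativity is automatic there, and it suffices to check each identity on components $H^{\otimes n}\otimes M\otimes (H^*)^{\otimes m}\otimes N^*$ with $n$ and $m$ large enough.

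Next I would organize the four operators according to which auxiliary object they touch. The operators $\pi^{H}$ and ${}^{H^*}\!\pi$ both involve $M$ (through $\lambda_M$ and $\delta_M$ respectively), while $\pi^{H^*}$ and ${}^{H}\!\pi$ both involve $N^*$ (through $\lambda_{N^*}$ and $\delta_{N^*}$). By Lemma \ref{thm:YD_dual} and the central symmetry of the bialgebra and YD axioms (Remark \ref{rmk:YD_br_alter}), the system $(H^*,N^*,H)$ is a mirror image of $(H,M,H^*)$; under the flip identification \eqref{eqn:flip} already used in the proof of the third line of Table \ref{tab:YDModBidiff}, the pair $(\pi^{H^*},{}^{H}\!\pi)$ is carried onto $(\pi^{H},{}^{H^*}\!\pi)$, and the six pairs fall into four duality classes. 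It therefore suffices to establish commutativity for one representative of each class, namely for the single pair touching $M$ and for the three mixed pairs $(\pi^{H},\pi^{H^*})$, $(\pi^{H},{}^{H}\!\pi)$ and $({}^{H^*}\!\pi,{}^{H}\!\pi)$.

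For the three mixed pairs neither operator disturbs the auxiliary object of the other, so after pulling the relevant strands apart by naturality \eqref{eqn:Nat} of the symmetric braiding $c$ the two composites differ only in the order in which they comultiply a common copy of $H$ or $H^*$ and in which they read off the two ends of a string. Here coassociativity of $H$ and of $H^*$, the defining (co)module axioms, and the main bialgebra axiom (Fig. \ref{pic:Bialg}, used to commute $\mu_H$ past $\Delta_H$ and dually $\mu_H^*$ past $\Delta_{H^*}$) reduce each identity to a tautology. I would carry these out graphically, since the diagrams render the repeated use of (co)associativity transparent.

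The only genuine obstacle is the pair $(\pi^{H},{}^{H^*}\!\pi)$, as both morphisms act on the same copy of $M$, one through the action $\lambda_M$ and the other through the coaction $\delta_M$. After separating by naturality of $c$ all strands that do not meet $M$, the two composites reduce to the two sides of a local diagram in which $\delta_M$ is applied after $\lambda_M$ versus $\lambda_M$ after $\delta_M$, entangled with one comultiplication of $H$; this is precisely the configuration equated by the Yetter-Drinfel$'$d compatibility condition \eqref{eqn:YD} for $M$ (in the $\crot^{YD}$/twisted form matching the conventions of Corollary \ref{thm:YDsystem_ex_crl}). Applying \eqref{eqn:YD} once identifies the two composites, and the dual pair $(\pi^{H^*},{}^{H}\!\pi)$ then follows from the YD compatibility for $N^*$ over $H^*$ granted by Lemma \ref{thm:YD_dual}. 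This isolates the entire content of the lemma in a single instance of the YD axiom, the remaining five pairs being formal consequences of (co)associativity, the bialgebra axiom and naturality.
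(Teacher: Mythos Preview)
Your argument is correct, but your organisation of the six pairs differs substantially from the paper's, and the two approaches locate the ``hard'' case in different places.

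The paper groups the pairs as follows: $(\pi^{H},{}^{H}\!\pi)$ and its dual are dispatched by coassociativity of $\mu^*$; $(\pi^{H},{}^{H^*}\!\pi)$ and its dual are dismissed as ``direct computation'' (or as a consequence of the underlying precubical structure, where these are adjacent face maps); and it is the pair $(\pi^{H},\pi^{H^*})$ (together with its dual $({}^{H}\!\pi,{}^{H^*}\!\pi)$) that the paper singles out as requiring ``more work''. For that pair the paper introduces reduced operators $\widehat{\pi}^{H},\widehat{\pi}^{H^*}$ and a common factor $\theta$ satisfying $\pi^{H}=\widehat{\pi}^{H}\circ\theta$, $\pi^{H^*}=\widehat{\pi}^{H^*}\circ\theta$, and observes that $\pi^{H}\circ\widehat{\pi}^{H^*}=\pi^{H^*}\circ\widehat{\pi}^{H}$ since these now act on disjoint components; the commutation follows.

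You, by contrast, call the three ``mixed'' pairs (including $(\pi^{H},\pi^{H^*})$) routine and isolate $(\pi^{H},{}^{H^*}\!\pi)$ as the genuine obstacle, resolving it by one application of the YD condition~\eqref{eqn:YD}. This is also correct: the ``direct computation'' the paper alludes to for that pair is precisely the YD identity, since both operators act on $M$, one through $\lambda_M$ and the other through $\delta_M$. Conversely, your claim that $(\pi^{H},\pi^{H^*})$ reduces to coassociativity is valid --- the paper's $\theta$-factorisation is just a clean way of packaging that same coassociativity argument --- though your invocation of the bialgebra axiom~\eqref{eqn:cat_bialg} there appears to be superfluous: only coassociativity of $\Delta_H$ and of $\mu_H^*$ is actually used. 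What each approach buys: yours makes explicit that the YD axiom is the sole non-formal ingredient in the lemma, which is structurally illuminating; the paper's factorisation trick gives a tidier bookkeeping for the overlap between $\pi^{H}$ and $\pi^{H^*}$ on the shared copies of $h_n$ and $l_m$, and connects more directly to the precubical viewpoint.
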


\begin{proof}
The commutation of $\pi^{H}$ and ${^{H}}\!\pi$ follows from the coassociativity of $\mu^*$ (this is best seen on Fig. \ref{pic:YD_hom_components}). The pair ${^{H^*}}\!\pi,\pi^{H^*}$ is treated similarly.

The commutation of $\pi^{H}$ and ${^{H^*}}\!\pi$ follows from their interpretation as parts of a precubical structure (cf. \cite{Lebed1}), or by a direct computation. The pair ${^{H}}\!\pi,\pi^{H^*}$ is treated similarly.

The case of the pair $\pi^{H},\pi^{H^*}$ demands more work.

Denote by $\widehat{\pi}^{H}$ a version of $\pi^{H}$ which ``forgets'' the rightmost component of $H^*$:
\begin{align*}
 \widehat{\pi}^{H}&(h_1\ldots h_n\otimes a\otimes l_1\ldots l_m \otimes b) =\\
&\left\langle l_{1(1)} \cdot l_{2(1)} \cdots l_{m-1(1)},h_{n(1)} \right\rangle h_1\ldots h_{n-1} \otimes (h_{n(2)}\cdot a)\otimes l_{1(2)}\ldots l_{m-1(2)} l_m \otimes b.
\end{align*}
Similarly, denote by $\widehat{\pi}^{H^*}$ a version of $\pi^{H^*}$ which ``forgets'' the rightmost component of $H$:
\begin{align*}
\widehat{\pi}^{H^*} &(h_1\ldots h_n\otimes a\otimes l_1\ldots l_m \otimes b) =\\
&\left\langle l_{m(1)}, h_{1(1)} \cdot h_{2(1)} \cdot \cdots \cdot h_{n-1(1)} \right\rangle 
h_{1(2)} \ldots h_{n-1(2)} h_n \otimes a \otimes l_1\ldots l_{m-1} \otimes l_{m(2)} \cdot b.
\end{align*} 
Further, put
\begin{align*}
\theta &(h_1\ldots h_n\otimes a\otimes l_1\ldots l_m \otimes b) = \left\langle l_{m(1)}, h_{n(1)} \right\rangle 
h_1\ldots h_{n(2)}\otimes a\otimes l_1\ldots l_{m(2)} \otimes b.
\end{align*} 
Observe that $\theta$ is precisely the difference between the $\pi$'s and their reduced versions $\widehat{\pi}$:
\begin{align*}
 {\pi}^{H} &=  \widehat{\pi}^{H} \circ \theta, & {\pi}^{H^*} &=  \widehat{\pi}^{H^*} \circ \theta.
\end{align*} 
Moreover, one has
\begin{align*}
 {\pi}^{H} \circ  \widehat{\pi}^{H^*}  &= {\pi}^{H^*} \circ  \widehat{\pi}^{H},
\end{align*} 
since ${\pi}^{H}$ and $\widehat{\pi}^{H^*}$ modify different components of $T(H)\otimes M \otimes T(H^*) \otimes N^*$ (the hat $\;\widehat{}\;$ (dis)appears when these morphisms switch because ${\pi}^{H}$  kills the rightmost copy of $H,$ and ${\pi}^{H^*}$  kills the rightmost copy of $H^*$). Therefore,
\begin{align*}
{\pi}^{H^*} \circ  {\pi}^{H} &= {\pi}^{H^*} \circ  \widehat{\pi}^{H} \circ \theta = 
  {\pi}^{H} \circ  \widehat{\pi}^{H^*} \circ \theta =  {\pi}^{H} \circ  {\pi}^{H^*},
\end{align*} 
hence the desired commutation.

The pair ${^{H}}\!\pi,{^{H^*}}\!\pi$ is treated similarly.
\end{proof}

\begin{remark}
Lemma \ref{thm:AdjActionsForYDModCommute} actually contains more than needed for the proof of the theorem. We prefer keeping its full form and checking the commutation of all the $\binom{4}{2}=6$ pairs of morphisms for completeness.
\end{remark}

\subsection*{Acknowledgements}

The author is grateful to Marc Rosso and J{\'o}zef Przytycki for stimulating discussions.

\bibliographystyle{alpha}
\bibliography{biblio}

\end{document}